\renewenvironment{abstract}
  {\quotation
  {\bfseries\noindent{\abstractname:}}}
  {\endquotation}
\theoremstyle{definition} 
\newtheorem{theorem}{Theorem}[section] 
\newtheorem{definition}[theorem]{Definition}
\newtheorem{proposition}[theorem]{Proposition}
\newtheorem{assumption}[theorem]{Assumption}
\newtheorem{rmk_temp}[theorem]{Remark}
\numberwithin{equation}{section} 
\newenvironment{remark}
  {\pushQED{\qed}\begin{rmk_temp}}
  {\popQED\end{rmk_temp}}
\NewDocumentEnvironment{remarkdraft}{+b}
  {\ifbool{draftversion}{\begin{remark}{\color{red}#1}\end{remark}}{}}{}
\newcommand{\N}{\mathbb{N}}
\newcommand{\Z}{\mathbb{Z}}
\newcommand{\R}{\mathbb{R}}
\newcommand{\C}{\mathbb{C}}
\newcommand{\A}{\mathbb{A}}
\newcommand{\eps}{{\varepsilon}}
\DeclareMathOperator{\Div}{div}
\let\oldsqrt\sqrt
\def\sqrt{\mathpalette\DHLhksqrt}
\def\DHLhksqrt#1#2{%
\setbox0=\hbox{$#1\oldsqrt{#2\,}$}\dimen0=\ht0
\advance\dimen0-0.2\ht0
\setbox2=\hbox{\vrule height\ht0 depth -\dimen0}%
{\box0\lower0.4pt\box2}}
\title{\LARGE\MakeUppercase{\textbf{Operator aspects of wave propagation through periodic media
}}}
\author[1]{Kirill Cherednichenko} 
\author[2]{Yi-Sheng Lim}
\affil[1]{Department of Mathematical Sciences, University of Bath, Claverton Down, Bath BA2 7AY, \newline United Kingdom (Email: kc525@bath.ac.uk)}
\affil[2]{Department of Mathematics, Texas A\&M University, College Station, TX 77843, \newline United States (Email: yishenglimysl@tamu.edu)}
\date{}
\begin{document}

\maketitle

\vspace{-0.8cm}

\begin{abstract}
    Recent results in quantitative homogenisation of the wave equation with rapidly oscillating coefficients are discussed from the operator-theoretic perspective, which views the solution as the result of applying the operator of hyperbolic dynamics, i.e. the unitary group of a self-adjoint operator on a suitable Hilbert space. A prototype one-dimensional example of utilising the framework of Ryzhov boundary triples is analysed, where operator-norm resolvent estimates for the problem of classical moderate-contrast homogenisation are obtained. By an appropriate ``dilation" procedure, these are shown to upgrade to second-order (and more generally, higher-order) estimates for the resolvent and the unitary group describing the evolution for the related wave equation.   
    

    \vskip 0.5cm
    
    {\bf Keywords} homogenisation $\cdot$ Resolvent asymptotics $\cdot$ Wave propagation $\cdot$ Hyperbolic evolution

    \vskip 0.5cm

    {\bf Mathematics Subject Classification (2020):}
    35P15, 35C20, 74B05, 74Q05.
\end{abstract}

\onehalfspacing
\section{Introduction}

\label{intro_sec}

\subsection{Homogenisation as a tool to study long waves}

This article is a survey of recent advances in homogenisation of the wave equation (WE), that is, in the study of approximating the effective \textit{transport} properties of a highly heterogeneous medium by those of a homogeneous one. In its basic form, homogenisation is introduced by considering the following initial-value problem for the WE:
%
%
\begin{align}\label{eqn:intro_wave_eqn}
    \begin{cases}
        \partial_{tt} u_\eps - \Div \left( \A_\eps \nabla u_\eps  \right) = f &\text{in $\R\times \R^d$,} \\
        u_\eps(0,\cdot) = u_{\text{init}}, \quad 
        \partial_t u_\eps (0,\cdot) = v_{\text{init}}
        &\text{on $\{ t=0 \} \times \R^d$,}
    \end{cases}
\end{align}
where $f$, $\A_\eps$, $u_{\text{init}}$, and $v_{\text{init}}$ are given, and a solution $u_\eps(t,x)$ is sought in an appropriate sense. The coefficient matrix $\A_\eps$ is the $\eps$-rescaling of a prescribed $\Z^d$-periodic symmetric positive-definite matrix-valued function $\mathbb{A}:\R^d \rightarrow \R_\text{sym}^{d\times d},$ namely $\A_\eps(x) = \A(x/\eps)$. The parameter $\eps>0$ thus encodes the heterogeneity of the medium. The starting point of our discussion is the following result.

\begin{theorem}\label{thm:basic_wave_homog}
    \cite[Theorem 12.6]{cioranescu_donato}.
    Let $\Omega \subset \R^d$ be a bounded Lipschitz domain. 
    Suppose that $\A$ is positive-definite uniformly in $x \in \R^d$, and has $L^\infty$ entries. Suppose that $f \in L^2((0,T) \times \Omega)$, $u_{\text{init}} \in H_0^1(\Omega)$, $v_{\text{init}} \in L^2(\Omega)$. Let $u_\eps$ be the solution of \eqref{eqn:intro_wave_eqn} on $\R\times\Omega$. Then, for each $T>0$, one has 
    \begin{alignat}{3}
        &u_\eps \rightharpoonup u_{\hom} 
        &&\text{weakly$^*$ in $L^\infty\bigl((0,T); H_0^1(\Omega)\bigr)$}, \label{basic_conv_uhom}\\
        &u_\eps' \rightharpoonup u_{\hom} 
        &&\text{weakly$^*$ in $L^\infty\bigl((0,T); L^2(\Omega)\bigr)$}, \\
        &\A_\eps \nabla u_\eps \rightharpoonup \A^{\hom} \nabla u_{\hom} \qquad 
        &&\text{weakly in $\bigl(L^2\bigl((0,T)\times\Omega\bigr)\bigr)^d$,}
    \end{alignat}
    where $u_{\hom}$ is the solution to the homogenised problem
    \begin{align}\label{eqn:homogenised_wave_eqn}
        \begin{cases}
            \partial_{tt} u_{\hom} - \Div(\A^{\hom} \nabla u_{\hom} ) = f, &\text{in $(0,T)\times\Omega$,}\\
            u_{\hom}(0,\cdot) = u_{\text{init}}, \quad 
            \partial_t u_{\hom}(0,\cdot) = v_{\text{init}}
            &\text{on $\{t=0 \}\times\Omega$.}
        \end{cases}
    \end{align}
\end{theorem}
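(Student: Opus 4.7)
The plan is to follow the classical energy-and-compactness approach, which for the wave equation reduces much of the work to the already-established elliptic homogenisation machinery, since the coefficient $\A_\eps$ depends only on $x$.

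First, I would establish uniform (in $\eps$) a priori bounds. Multiplying the equation in \eqref{eqn:intro_wave_eqn} by $\partial_t u_\eps$ and integrating in $x$ over $\Omega$ produces the standard energy identity. Combined with Grönwall's lemma and the uniform ellipticity of $\A$, this gives uniform bounds
\[
\|u_\eps\|_{L^\infty(0,T;H_0^1(\Omega))} + \|\partial_t u_\eps\|_{L^\infty(0,T;L^2(\Omega))} + \|\A_\eps \nabla u_\eps\|_{L^\infty(0,T;L^2(\Omega)^d)} \le C,
\]
with $C$ independent of $\eps$. By the Banach--Alaoglu theorem, one can extract a subsequence (still denoted $u_\eps$) so that $u_\eps \rightharpoonup u_*$ weakly-$*$ in $L^\infty(0,T; H_0^1(\Omega))$, $\partial_t u_\eps \rightharpoonup \partial_t u_*$ weakly-$*$ in $L^\infty(0,T; L^2(\Omega))$, and $\A_\eps \nabla u_\eps \rightharpoonup \xi$ weakly in $L^2((0,T)\times\Omega)^d$ for some $\xi$. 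Passing to the limit in the distributional form of the PDE yields $\partial_{tt} u_* - \Div \xi = f$, and the initial conditions are inherited via standard arguments using the equi-continuity in $t$ provided by the bound on $\partial_t u_\eps$.

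The core of the proof is identifying the flux limit as $\xi = \A^{\hom} \nabla u_*$, where $\A^{\hom}$ is the homogenised matrix defined via the periodic cell problems: for each $j=1,\dots,d$, let $\chi_j \in H^1_{\#}(\square)/\R$ solve $-\Div(\A(\nabla \chi_j + e_j)) = 0$ on the unit cell $\square$, and set $\A^{\hom} e_j := \langle \A(\nabla \chi_j + e_j) \rangle_\square$. To identify $\xi$, I would use Tartar's method of oscillating test functions: set $w_\eps^j(x) := x_j + \eps \chi_j(x/\eps)$, so that $\nabla w_\eps^j$ is a bounded sequence whose weak limit is $e_j$ and whose image under $\A_\eps^\top = \A_\eps$ weak-converges to $\A^{\hom} e_j$, with $\Div(\A_\eps \nabla w_\eps^j) = 0$. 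Testing the weak form of \eqref{eqn:intro_wave_eqn} against $\varphi \, w_\eps^j$ for a smooth compactly supported $\varphi$, and the homogenised-like expression against $\varphi \, x_j$, one exploits the compactness of $u_\eps$ in $L^2((0,T)\times\Omega)$ (Aubin--Lions, using the time regularity of $\partial_t u_\eps$) together with the div-curl (compensated compactness) structure to pass to the limit in the quadratic-in-gradient terms. This yields $\int \xi \cdot e_j \, \varphi = \int \A^{\hom} \nabla u_* \cdot e_j \, \varphi$ for every $\varphi$ and every $j$.

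The hard part is exactly this flux identification, since one must pass to the limit in a product of two merely weakly convergent sequences, and this step is where the periodic oscillator structure and the cell problem truly intervene; everything else is either energy bookkeeping or soft functional analysis. Once $\xi = \A^{\hom} \nabla u_*$ is established, $u_*$ solves the homogenised wave equation \eqref{eqn:homogenised_wave_eqn}; uniqueness for this hyperbolic problem with the symmetric positive-definite constant matrix $\A^{\hom}$ (which is standard) shows $u_* = u_{\hom}$, and hence upgrades the subsequential convergence to convergence of the full family $\eps \to 0$, giving \eqref{basic_conv_uhom} and its companions.
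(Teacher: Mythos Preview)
Your sketch is correct and follows a standard route to this classical result: uniform energy bounds, weak compactness, Tartar's oscillating test functions together with div--curl structure to identify the flux, and uniqueness of the limit problem to upgrade from subsequences to the full family. Note, however, that the paper does not supply its own proof of this theorem: it is quoted from \cite[Theorem~12.6]{cioranescu_donato} and the surrounding text simply lists several possible proof strategies (two-scale convergence with Galerkin approximation, two-scale expansion, G-convergence). Your argument is essentially the compensated-compactness/G-convergence variant among those, so there is nothing to compare against in the paper itself.
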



The matrix $\A^{\hom}$ appearing in Theorem \ref{thm:basic_wave_homog} is called the \textit{homogenised tensor} -- it is constant in space, representing an effective homogeneous medium. Theorem \ref{thm:basic_wave_homog} is by now a classical result, and can be proven by various means, for instance, by two-scale convergence (in the elliptic setting) and Galerkin approximation (in the hyperbolic setting) \cite[Chapter 12]{cioranescu_donato}, by a two-scale expansion \cite[Chapter 4]{bakhvalov_panasenko}, or by G-convergence \cite[Chapter 5]{zhikov}. However, Theorem \ref{thm:basic_wave_homog} is insufficient from wave-propagation perspective because (a) it is a qualitative result (i.e.~no rate of convergence), and (b) it is only a finite-time approximation. While there has been substantial efforts over the past two decades to address (a), much of the literature has been focused on the stationary setting (see for instance \cite[Sect~1.3]{simplified_method} for a recent overview). As far as time-dependent equations are concerned, much of the activity lies in the parabolic setting, mainly due to the fact that the fundamental solution exhibits nice decay properties (see e.g.~\cite[Chapter~2]{zhikov} or \cite[Chapter~8--9]{qshomo_book}). The goal of this review is to draw awareness to the literature in the \textit{hyperbolic} case, and specifically the WE. We shall avoid any discussion on boundary effects, and focus on the full-space setting $\Omega = \R^d$.

%
%

\subsection{Operator perspective}
A number of competing approaches have been developed for the analysis of the behaviour of initial value problems for the wave equation with rapidly oscillating periodic coefficients. Naturally, the results obtained differ in terms of the balance between the approximation error, the time interval (expressed in terms of the parameter $\varepsilon$), and the quality of the data (the initial conditions and the right-hand side of the equation) -- the latter can usually be expressed in terms of the behaviour of the spatial and temporal Fourier transforms of the data for large values of the corresponding Fourier parameters. Usually, homogenisation estimates are sought with respect to the $L^2$ norm and the $H^1$ (``energy") norm. 

The small parameter $\varepsilon$ (appearing as the period of the coefficients in the equation) represents the ratio between the physical length-scale of material property oscillations and another length -- the latter is then much larger than the former, so we shall refer to it as ``macroscopic". The natural choice of the macroscopic length depends on the problem in question, and so the asymptotic analysis as $\varepsilon\to0$ corresponds to selecting a class of solutions of the original equation that are ``close" (in some sense, which is to be specified as part of the proof of error estimates) to some solutions of an equation with non-oscillatory coefficients, which we will refer to as ``homogenised". In the study of wave propagation through a medium occupying a large part of space, when mathematically the whole-space set-up appears to be a plausible model of the physical process and assuming that the material properties of the medium are time-independent, the standard spectral analysis based on the temporal Fourier transform is a natural step towards introducing the macroscopic length. In effect, the special solutions mentioned above are in this case finite-energy combinations of monochromatic waves with frequencies that render the corresponding wavelengths controllably large compared to the period of the material oscillations (which plays the role of a ``microscale"). When considering the Cauchy problem, the time interval over which the corresponding solutions to the original heterogeneous and the homogenised wave equations are controllably close to each other (in terms of some order of smallness with respect to $\varepsilon$) will depend on the degree of dispersion of the wave energy into modes not captured by the homogenised equation. 

The above discussion only involves two length-scales and hence one small parameter, which we have labelled by $\varepsilon.$ It is then implicit that the material properties do not vary much across the period -- in quantitative terms, the product of the period and the spatial gradient of the material coefficients is uniformly small relative to $\varepsilon^{-\gamma}$ for any $\gamma>0.$ For example, for a two component medium, one has two natural lengths, namely the wavelengths (at a given frequency $\omega$) $\lambda_-=2\pi c_1/\omega,$ $\lambda_2(\omega)=2\pi c_2/\omega$ the two components -- here, $c_1,$ $c_2$ are the corresponding wavespeeds. When $\lambda_-/\lambda_2$ is close to unity (the case of ``moderate contrast"), the parameter $\varepsilon$ (the ratio of the period and, say $\lambda_-$) is not too different from $\delta:=\varepsilon(\lambda_-/\lambda_2)$ and so the passage to the limit as $\varepsilon\to0,$ $\delta\to0$ is accomplished without specifying a ``path" in the $(\varepsilon, \delta)$ parameter space -- this is the scenario that most existing literature focusses on and that we also refer to as ``classical". Increasing the ratio $\lambda_-/\lambda_2$ or its inverse leads to deterioration of the ``classical" error estimate, due to the waves with shorter lengths being admitted by one of the components, which results in a non-classical, two-scale, wave picture on the macroscale, i.e., on the scale of the larger wavelength of the two. In practical terms, in any heterogeneous medium there is a certain amount of ``non-classical" behaviour due to the length separation between the wave lengths involved, e.g., $\lambda_-$ and $\lambda_2$ in the above case of a two-component medium. In this review we focus on the case of moderate contrast.     

Consider a Hilbert space $H$ and an unbounded positive-definite self-adjoint operator $A$ on $H.$  The Cauchy problem for the WE associated with $A$ consists in finding 
$u\in C^2(\mathbb R; H)$ such that
\begin{equation}
\partial_{tt}u-Au=f(t) \quad \forall t\in{\mathbb R},\qquad u(0)=u_\text{init},\quad u_t(0)=v_\text{init}
\label{Cauchy}
\end{equation}
for given $f\in C(\mathbb R; H),$ $u_\text{init} \in{\rm dom}(A),$ $v_\text{init}\in H,$ see e.g. \cite[Section 6.2]{konrad_book}. If 
$f(t)\in{\rm dom}(A^{1/2})$ for all $t\in{\mathbb R},$ $u_\text{init}\in{\rm dom}(A),$ $v_\text{init}\in {\rm dom}(A^{1/2}),$ the solution to (\ref{Cauchy}) is given by 
\begin{equation}
u(t)=\cos\bigl(A^{1/2}t\bigr)u_\text{init}+A^{-1/2}\sin\bigl(A^{1/2}t\bigr)v_\text{init}+\int_0^t A^{-1/2}\sin\bigl(A^{1/2}(t-s)\bigr)f(s)ds.
\label{hyperbolic_formula}
\end{equation}

Many problems of wave propagation (in the physical contexts of acoustics, linear (i.e., small-displacement) elastodynamics,  electromagnetics) can be written in the form (\ref{Cauchy}), where $A$ is a second-order linear differential operator.

As a prototype example, in Sections \ref{prototype}--\ref{second_order_dynamics} we shall consider the operator $A$ on the Hilbert space $H=L^2(0, l)\oplus L^2(l,1)$ (for fixed $l\in(0,1)$) with domain 
\begin{equation*}
\begin{aligned}
{\rm dom}(A)=\bigl\{u=u_-\oplus u_+&\in H^2(0,l)\oplus H^2(l, 1):
u_-(0)=u_+(1), u_-(l)=u_+(l), u_-'(0)=u_+'(1), u_-'(l)=u_+'(l)\bigr\}
\end{aligned}
\end{equation*}
defined by the differential expression $(au')',$ where the coefficient $a$ is piecewise-constant, $a(x)=a_-,$ $x\in(0,l),$  $a(x)=a_2,$ $x\in(l,1).$

\section{Improving the basic homogenisation result}\label{sect:improving_basic_homo}

To unify our discussion, let us view \eqref{eqn:intro_wave_eqn} and \eqref{eqn:homogenised_wave_eqn} from an operator perspective: Consider
\begin{align}
    \mathcal{A}_\eps = -\Div\left( \A_\eps \nabla \right)
    \qquad \text{and} \qquad
    \mathcal{A}^{\hom} = -\Div\left( \A^{\hom} \nabla \right)
\end{align}
as unbounded self-adjoint operators on $H = L^2(\R^d)$, then the Cauchy problem for the heterogeneous WE \eqref{eqn:intro_wave_eqn} is solved by (cf. (\ref{hyperbolic_formula}))
\begin{align}\label{eqn:u_eps_abstract}
    u_\eps(t) = \cos\left( \mathcal{A}_\eps^{1/2} t \right) u_\text{init}
    + \mathcal{A}_\eps^{-1/2} \sin\left( \mathcal{A}_\eps^{1/2} t \right) v_\text{init}
    + \int_0^t \mathcal{A}_\eps^{-1/2} \sin\left( \mathcal{A}_\eps^{1/2} (t-s) \right) f(s)ds,
\end{align}
and the Cauchy problem for the homogeneous WE \eqref{eqn:homogenised_wave_eqn} is solved by
\begin{equation}\label{eqn:u_hom_abstract}
\begin{split}
    u_{\hom}(t) &= \cos\left( (\mathcal{A}^{\hom})^{1/2} t \right) u_\text{init}
    + (\mathcal{A}^{\hom})^{-1/2} \sin\left( (\mathcal{A}^{\hom})^{1/2} t \right) v_\text{init} \\
    &\qquad\qquad + \int_0^t (\mathcal{A}^{\hom})^{-1/2} \sin\left( (\mathcal{A}^{\hom})^{1/2} (t-s) \right) f(s)ds.
\end{split}
\end{equation}

\subsection{The spectral germ approach}
In the seminal paper \cite{birman_suslina_2004}, Birman and Suslina introduced a novel approach to the study of highly-oscillatory media which we will henceforth call the ``spectral germ approach". It is based on the observation that homogenisation is essentially a task of approximating the periodic operator $\mathcal{A}_{\eps=1}$ near the bottom of the spectrum (``threshold") $z=0$. This is done through a Floquet-Bloch analysis of $\mathcal{A}_\eps$, which we shall briefly describe below.\footnote{As a historical note, the use of Floquet-Bloch analysis in the context of homogenisation can be traced back to Conca and \newline Vanninathan \cite{conca_vanninathan1997}, but the authors did not pursue the goal of improving the convergences in Theorem \ref{thm:basic_wave_homog}}

Applying the Floquet-Bloch-Gelfand transform to $\mathcal{A}_\eps$, one obtains the operator family $\{\eps^{-2} \mathcal{A}_\chi \}_{\chi \in [-\pi,\pi]^d}$. The operators $\mathcal{A}_\chi$ act on $L^2([0,1]^d)$, and are given by the differential expression $(\nabla_y + i\chi)^\ast \mathbb{A}(y)(\nabla_y + i\chi)$. As a result, the period $\eps$ now appears as a scaling factor $\eps^{-2}$, the dependence on the ``quasi-momentum/wavevector" parameter $\chi$ is analytic \cite{kato_book, reed_simon4}, and the spectrum $\sigma(\mathcal{A}_\chi)$ is discrete. As noted above, we are interested in the bottom of the spectrum, and this corresponds to the first eigenvalue of $\mathcal{A}_{\chi=0} = -\Div_y ( \mathbb{A}(y) \nabla_y)$, thus one seeks to approximate the first eigenspace of $\mathcal{A}_\chi$ near $\chi=0$. The key object in this approximation is the so-called ``spectral germ", which in this case is simply the number $(i\chi)^\ast \mathbb{A}^{\hom}(i\chi)$ (and in the abstract theory, viewed as an auxiliary operator on $\ker(\mathcal{A}_{\chi=0}) = \C$). By a careful reconstruction of $\mathcal{A}^{\hom}$ from the germ $(i\chi)^\ast \mathbb{A}^{\hom}(i\chi)$, one obtains the following \textit{norm-resolvent} estimate
\begin{align}\label{eqn:norm_resolvent_estimate}
    \bigl\| (\mathcal{A}_\eps + I)^{-1} - (\mathcal{A}^{\hom} + I)^{-1}\bigr\|_{L^2(\R^d) \rightarrow L^2(\R^d)} \leq C \eps, \quad \text{where $C>0$ is independent of $\eps$.}
\end{align}
or equivalently,
\begin{align}
    \| u_\eps - u_{\hom} \|_{L^2(\R^d)} \leq C \eps \, \| f \|_{L^2(\R^d)}, \qquad \text{where $C>0$ is independent of $\eps$ and $f$.}
\end{align}
Operator-norm/uniform estimates such as \eqref{eqn:norm_resolvent_estimate} first appeared in \cite{birman_suslina_2004}. It turns out that \eqref{eqn:norm_resolvent_estimate} is order-sharp.

\begin{remark}\label{rmk:ker_A0}
    The space of constant functions (in $y$), $\C = \ker(\mathcal{A}_{\chi=0})$ play a key role in \textit{all} approaches to homogenisation, and appears in different guises. For instance, ensures that the two-scale expansion \cite{bakhvalov_panasenko} has a leading-order term that is independent of the microscopic variable.
\end{remark}

\begin{remark}
    For the remainder of this section, the constant $C>0$ will be independent of $\eps$ and $t$. 
\end{remark}

The spectral germ approach has since undergone several developments, and we shall now discuss its extension to the WE. Due to the operator representation \eqref{eqn:u_eps_abstract} and \eqref{eqn:u_hom_abstract}, we break our investigation into $\cos\bigl( \mathcal{A}_\eps^{1/2} t \bigr)$, $\mathcal{A}_\eps^{-1/2} \sin{\bigl(t\mathcal{A}_\eps^{1/2}\bigr)}$, and $\int_0^t \mathcal{A}_\eps^{-1/2} \sin\bigl( \mathcal{A}_\eps^{1/2} (t-s) \bigr) f(s)ds$. We begin with the operator $\cos\bigl( \mathcal{A}_\eps^{1/2} t \bigr)$. In \cite[Sect 13.1]{birman_suslina_2009_hyperbolic}, Birman and Suslina proved that for $0 \leq s \leq 2$, we have
\begin{align}\label{eqn:birman_suslina_cosine}
    \left\| \, \cos\left( \mathcal{A}_\eps^{1/2} t \right) - \cos\left( (\mathcal{A}^{\hom})^{1/2} t \right) \, \right\|_{H^s(\R^d) \rightarrow L^2(\R^d)} \leq C_s \eps^{s/2} (1 + |t|)^{s/2}, \qquad t \in \R.
\end{align}
Then, due to the operator identity $\mathcal{A}_\eps^{-1/2} \sin\bigl(t\mathcal{A}_\eps^{1/2}\bigr) = \int_0^t \cos{(t\mathcal{A}_\eps^{1/2})} dt$, one obtains
\begin{align}\label{eqn:birman_suslina_sine}
    \left\| \mathcal{A}_\eps^{-1/2} \sin{\left(t\mathcal{A}_\eps^{1/2}\right)} - (\mathcal{A}^{\hom})^{-1/2} \sin\left( (\mathcal{A}^{\hom})^{1/2} t \right) \right\|_{H^s(\R^d) \rightarrow L^2(\R^d)} \leq C \eps^{s/2} (1 + |t|)^{1+s/2}, \qquad t \in \R.
\end{align}

Note that if the initial datum $u_\text{init}$ or $v_\text{init}$ is only $L^2(\R^d)$ (the $s=0$ case), then the above estimates are useless. Indeed, norm-resolvent convergence \eqref{eqn:norm_resolvent_estimate} only guarantees $\| g(\mathcal{A}_\eps) - g(\mathcal{A}^{\hom}) \|_{op} \rightarrow 0$, for $g$ that is continuous on $\R$ and vanishes at infinity \cite[Theorem VIII.20]{reed_simon1}, which $g_t(\lambda) = \cos{(\lambda^{1/2} t)}$ does not satisfy.

Building on \cite{birman_suslina_2009_hyperbolic} was a series of works to confirm or improve upon the estimates \eqref{eqn:birman_suslina_cosine}-\eqref{eqn:birman_suslina_sine}. In \cite{dorodnyi_suslina_2018_hyperbolic}, Dorodnyi and Suslina verified that \eqref{eqn:birman_suslina_cosine} is sharp in the sense of the regularity of the initial data. That is, the $H^s\rightarrow L^2$ norm cannot be replaced by $H^r\rightarrow L^2$ with $r<s$, while maintaining the same RHS.\footnote{This implies that \eqref{eqn:birman_suslina_cosine} is \textbf{order}-sharp for $s<2$.} On the other hand, it turns out that \eqref{eqn:birman_suslina_sine} could be improved. Focusing on the $s=1$ case (in which \eqref{eqn:birman_suslina_sine} says that we have a valid approximation for times $t \leq \eps^{-1/3 + \delta}$), Meshkova \cite{meshkova_2021_hyperbolic} showed that
\begin{align}\label{eqn:birman_suslina_sine_sharp}
    \left\| \mathcal{A}_\eps^{-1/2} \sin{\left(t\mathcal{A}_\eps^{1/2}\right)} - (\mathcal{A}^{\hom})^{-1/2} \sin\left( (\mathcal{A}^{\hom})^{1/2} t \right) \right\|_{H^1(\R^d) \rightarrow L^2(\R^d)} \leq C \eps\bigl(1 + |t|\bigr), \qquad t \in \R,
\end{align}
in which the sharpness (in the same sense) is verified in \cite{dorodnyi_suslina_2018_hyperbolic}.

Overall, the spectral germ approach gives the following result.

\begin{theorem}\label{thm:spectral_germ_overall}
    \cite[Theorem 12.1]{dorodnyi_suslina_2018_hyperbolic} 
    Let $0 \leq s \leq 2$ and $0 \leq r \leq 1$.\footnote{While $A^{1/2}$ is unbounded with domain $\text{dom}(A^{1/2}) = H^1$, we can extend $A^{-1/2}\sin{(A^{1/2}t)}$ to a bounded operator on $L^2$, for each $t$. Thus, it makes sense to speak about $A^{-1/2}\sin{(A^{1/2}t)} v_\text{init}$, where $v_\text{init}$ lies in $H^r$, $0 \leq r \leq 1$. A similar remark applies to $u_\text{init}$ and $f$.} If $u_\text{init} \in H^s(\R^d)$, $v_\text{init} \in H^r(\R^d)$, and $f \in L_{\text{loc}}^1(\R;H^r(\R^d))$, then there exists positive constants $C_s$ and $C_r$, independent of $\eps$, such that for $t \in \R$,
    \begin{align}\label{eqn:spectral_germ_overall}
        \bigl\| u_\eps(t) - u_{\hom}(t) \bigr\|_{L^2(\R^d)} \leq 
        C_s \eps^{s/2} (1 + |t|)^{s/2} \| u_\text{init} \|_{H^s(\R^d)}
        + C_r \eps^r (1 + |t|) \left[ \| v_\text{init} \|_{H^r(\R^d)} + \| f \|_{L^1((0,t);H^r(\R^d))} \right].
    \end{align}
    Moreover, if we only have $u_\text{init}, v_\text{init} \in L^2(\R^d)$ and $f \in L_{\text{loc}}^1(\R;L^2(\R^d))$ (the case $s=r=0$), then
    \begin{align}\label{eqn:l2data_theorem}
        \bigl\| u_\eps(t) - u_{\hom}(t) \bigr\|_{L^2(\R^d)} \rightarrow 0, \qquad t \in \R.
    \end{align}
\end{theorem}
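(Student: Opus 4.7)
The plan is to view the estimate \eqref{eqn:spectral_germ_overall} as a direct consequence of the three ``building block'' operator estimates already in place, namely the cosine estimate \eqref{eqn:birman_suslina_cosine} and the improved sine estimate \eqref{eqn:birman_suslina_sine_sharp} (together with its endpoint $r=0$ version), combined with Duhamel's principle encoded in \eqref{eqn:u_eps_abstract}--\eqref{eqn:u_hom_abstract}. I would first subtract \eqref{eqn:u_hom_abstract} from \eqref{eqn:u_eps_abstract} to obtain a decomposition
\begin{align*}
u_\eps(t) - u_{\hom}(t) &= \Bigl[\cos\bigl(\mathcal{A}_\eps^{1/2} t\bigr) - \cos\bigl((\mathcal{A}^{\hom})^{1/2} t\bigr)\Bigr] u_\text{init}\\
&\quad+\Bigl[\mathcal{A}_\eps^{-1/2}\sin\bigl(\mathcal{A}_\eps^{1/2} t\bigr) - (\mathcal{A}^{\hom})^{-1/2}\sin\bigl((\mathcal{A}^{\hom})^{1/2} t\bigr)\Bigr] v_\text{init}\\
&\quad+\int_0^t\Bigl[\mathcal{A}_\eps^{-1/2}\sin\bigl(\mathcal{A}_\eps^{1/2}(t-s)\bigr) - (\mathcal{A}^{\hom})^{-1/2}\sin\bigl((\mathcal{A}^{\hom})^{1/2}(t-s)\bigr)\Bigr] f(s)\,ds,
\end{align*}
and then apply the triangle inequality in $L^2(\R^d)$ to handle the three terms independently.

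For the first term, the bound $C_s \eps^{s/2}(1+|t|)^{s/2}\|u_\text{init}\|_{H^s}$ is a direct transcription of \eqref{eqn:birman_suslina_cosine}. For the second term, I would note that at the endpoint $r=1$ the estimate \eqref{eqn:birman_suslina_sine_sharp} yields exactly $C\eps(1+|t|)\|v_\text{init}\|_{H^1}$, while at the endpoint $r=0$ the unitary functional calculus gives the trivial bound $2\|v_\text{init}\|_{L^2}$, which we may write as $C(1+|t|)^1\|v_\text{init}\|_{L^2}$. For intermediate $r \in (0,1)$, the desired bound $C_r\eps^r(1+|t|)\|v_\text{init}\|_{H^r}$ follows by real interpolation of the operator $\mathcal{A}_\eps^{-1/2}\sin(\mathcal{A}_\eps^{1/2} t)-(\mathcal{A}^{\hom})^{-1/2}\sin((\mathcal{A}^{\hom})^{1/2}t)$ between the scales $(H^0,L^2)$ and $(H^1,L^2)$, noting that the $(1+|t|)$ factor is common to both endpoints so it comes out of the interpolation unchanged.

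For the third (Duhamel) term I would pull the $L^2_x$ norm under the time integral via Minkowski's inequality, reducing the task to the same sine-operator estimate applied pointwise in $s \in (0,t)$ to $f(s) \in H^r(\R^d)$:
\begin{align*}
\left\|\int_0^t\bigl[\cdots\bigr]f(s)\,ds\right\|_{L^2}
\leq \int_0^t C_r\eps^r\bigl(1+|t-s|\bigr)\|f(s)\|_{H^r}\,ds
\leq C_r\eps^r(1+|t|)\,\|f\|_{L^1((0,t);H^r)}.
\end{align*}
Combining the three contributions and renaming constants yields \eqref{eqn:spectral_germ_overall}.

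Finally, for the qualitative strong convergence \eqref{eqn:l2data_theorem} in the case $s=r=0$, the rate-based estimate is vacuous, so I would instead use a standard density argument. The operators $\cos(\mathcal{A}_\eps^{1/2} t)$, $\mathcal{A}_\eps^{-1/2}\sin(\mathcal{A}_\eps^{1/2}(t-s))$, and their homogenised counterparts are uniformly bounded in $\eps$ on $L^2(\R^d)$ by the spectral theorem (with operator norm at most $1$ for the cosine and at most $|t-s|$ for the sine, by the elementary bound $|\sin(\lambda t)/\lambda|\leq |t|$). Hence for any $\eta>0$ I can choose $H^1$-approximants of $u_\text{init}$, $v_\text{init}$ and $H^1$-valued approximants in $L^1_{\text{loc}}$ of $f$ to within an error $\eta$ in the appropriate norm; the already-established quantitative bound handles the smoothed data, and the uniform boundedness handles the remainder via a three-$\eta$ argument. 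The main obstacle I anticipate is purely bookkeeping: ensuring that the interpolation argument for intermediate $r$ is legitimate (rather than merely by convexity of the endpoint bounds) — which is handled by observing that the two operators involved form a family analytic in $r$ in the sense of the Riesz–Thorin/Stein complex interpolation, applied strip-by-strip at each fixed $t$.
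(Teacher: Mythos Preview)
Your proposal is correct and follows the same route as the paper: the paper presents the building-block estimates \eqref{eqn:birman_suslina_cosine} and \eqref{eqn:birman_suslina_sine_sharp}, then states Theorem~\ref{thm:spectral_germ_overall} as their combined consequence via the Duhamel representation \eqref{eqn:u_eps_abstract}--\eqref{eqn:u_hom_abstract}, exactly as you carry out. The one minor difference is for the qualitative convergence \eqref{eqn:l2data_theorem}: the paper (see the remark immediately after the theorem) obtains it directly from the norm-resolvent estimate \eqref{eqn:norm_resolvent_estimate} together with \cite[Theorem~VIII.20(b)]{reed_simon1}, which gives strong convergence of $g(\mathcal{A}_\eps)\to g(\mathcal{A}^{\hom})$ for bounded continuous $g$ --- whereas you reach the same conclusion by the equivalent density-plus-uniform-boundedness argument.
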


\begin{remark}
    The convergence \eqref{eqn:l2data_theorem} is a direct consequence of \eqref{eqn:norm_resolvent_estimate} and \cite[Theorem VIII.20(b)]{reed_simon1}.
\end{remark}

\paragraph*{Summary.} The spectral germ approach improves the basic homogenisation result (Theorem \ref{thm:basic_wave_homog}) by (a) upgrading the convergence to an operator-norm type, with an explicit rate, and (b) provides an estimate beyond a finite time window. Regarding (b), let us write \eqref{eqn:spectral_germ_overall} in the case of $u_\text{init} \in H^2({\mathbb R}^d)$, $v_\text{init} \in H^1({\mathbb R}^d)$, and $f \in L_{\text{loc}}^1\bigl(\R;H^1({\mathbb R}^d)\bigr)$, for reader's convenience:
\begin{align}\label{eqn:spectral_germ_overall2}
    \bigl\| u_\eps(t) - u_{\hom}(t)\bigr\|_{L^2({\mathbb R}^d)} \leq 
    C \eps (1 + |t|) \left[ \| u_\text{init} \|_{H^2({\mathbb R}^d)} + \| v_\text{init} \|_{H^1({\mathbb R}^d)} + \| f \|_{L^1((0,t);H^1({\mathbb R}^d))} \right].
\end{align}
That is, $u_{\hom}$ is a valid approximation of $u_\eps$ up to times $t\leq \eps^{-1+\delta}$.

One could wonder if the $\mathcal{O}(\eps(1+|t|))$ error in \eqref{eqn:spectral_germ_overall2} could be improved, if we are willing to restrict ourselves to smooth initial data $(u_\text{init}, v_\text{init}, f)$. We expect the answer to be \textit{no}, but to our knowledge there has been no proper discussion of this claim. If in addition to smooth data $(u_\text{init}, v_\text{init}, f)$, one adds more terms to $u_{\hom}(t)$ following the two-scale expansion \cite{bensoussan_lions_papanicolaou, bakhvalov_panasenko}, could the error be improved then? The answer is \textit{yes, to a certain extent} (times $t \leq \eps^{-2+\delta}$). This brings us to the findings of Allaire, Lamacz-Keymling, and Rauch \cite{allaire_lamacz_rauch_2022_crime_pays}, which we shall elaborate in the next section.

\subsection{Two-scale expansion and the secular growth problem}

To fix notation, we shall begin with a brief review of the classical two-scale expansion \cite{bensoussan_lions_papanicolaou, bakhvalov_panasenko} in the context of the WE \eqref{eqn:intro_wave_eqn}. We make the following assumptions on the initial data.

\begin{assumption}\label{assumption}
    $u_\text{init} = v_\text{init} = 0$, and $f \in C^\infty(\R;H^\infty(\R^d))$ with $\text{supp}(f)\subset [0,1]_t \times \R_x^d$.
\end{assumption}

\begin{definition}
    For two $k$-order tensors $A = (a_{i_1,\cdots,i_k})_{1\leq i_1,\cdots,i_k \leq d}$ and $B = (b_{i_1,\cdots,i_k})_{1\leq i_1,\cdots,i_k \leq d}$, their (full) tensor contraction is denoted $A \odot B = \sum_{i_1,\cdots,i_k} a_{i_1,\cdots,i_k}b_{i_1,\cdots,i_k}$. For matrices (i.e.~$k=2$), we write $A:B := A\odot B$. 
\end{definition}

\subsubsection*{Brief review of the (hyperbolic) two-scale expansion.} We seek an asymptotic expansion for $u_\eps$ in powers of $\eps$
\begin{align}\label{eqn:two_scale_expansion_wave1}
    u_\eps \sim u_0 + \eps u_1 + \eps^2 u_2 + \eps^3 u_3 + \cdots,
\end{align}
where we assume that each $u_n$ depend on $t$, and two spatial variables: a macroscopic (``slow") variable $x$, and a microscopic (``fast") variable $y$, which we will evaluate at $y = x/\eps$ (hence the term ``fast variable"). That is, 
\begin{align}\label{eqn:two_scale_expansion_wave2}
    u_\eps(t,x) \sim u_0\biggl(t,x,\frac{x}{\eps}\biggr) + \eps u_1\biggl(t,x,\frac{x}{\eps}\biggr) + \eps^2 u_2\biggl(t,x,\frac{x}{\eps}\biggr) + \eps^3 u_3\biggl(t,x,\frac{x}{\eps}\biggr) + \cdots.
\end{align}
We assume that each $u_j(t,x,y)$ is $\Z^d-$periodic in the $y-$variable.

For a function $u(t,x,y)$, write $\nabla_x$ and $\nabla_y$ for its derivatives in the variable $x$ and $y$ respectively. Then, $\Psi_\eps(x) := \Psi(x,x/\eps)$ gives $(\nabla \Psi_\eps)(x) = \eps^{-1}(\nabla_y \Psi)(x,x/\eps) + (\nabla_x \Psi)(x,x/\eps)$. Thus,
\begin{align}
    (\mathcal{A}_\eps \Psi_\eps)\biggl(x,\frac{x}{\eps}\biggr) = 
    \left[ \frac{1}{\eps^2} \mathcal{A}_{yy}\Psi_\eps 
    + \frac{1}{\eps} \mathcal{A}_{xy}\Psi_\eps 
    + \mathcal{A}_{xx}\Psi_\eps \right]\biggl(x,\frac{x}{\eps}\biggr),
\end{align}
where
\begin{align}
    \mathcal{A}_{yy} = - \Div_y \bigl( \A(y) \nabla_y  \bigr),
    \quad
    \mathcal{A}_{xy} = - \Div_x \bigl( \A(y) \nabla_y  \bigr) - \Div_y \bigl( \A(y) \nabla_x  \bigr),
    ~~\text{and} \quad
    \mathcal{A}_{xx} = - \Div_x \bigl( \A(y) \nabla_x  \bigr).
\end{align}
Here, $\nabla_y$ and $-\Div_y$ are equipped with periodic boundary conditions (as we are seeking $y-$periodic functions), and $\A(y)$ denotes the restriction of $\A:\R^d\rightarrow \R^{d\times d}_{\text{sym}}$ to the unit-cell $Y=[0,1]^d$. Applying the operator $\partial_{tt} + \eps^{-2} \mathcal{A}_{yy} + \eps^{-1} \mathcal{A}_{xy} + \mathcal{A}_{xx}$ to the RHS of \eqref{eqn:two_scale_expansion_wave1} (as a function in $(t,x,y)$), we obtain a formal expansion for the WE:
\begin{align}\label{eqn:wave_eqn_formal}
    \Biggl(\partial_{tt} + \frac{1}{\eps^2} \mathcal{A}_{yy} + \frac{1}{\eps} \mathcal{A}_{xy} + \mathcal{A}_{xx} \Biggr) \sum_{n=0}^{\infty} \eps^n u_n = f.
\end{align}

One then starts equating like powers of $\eps$, giving rise to a system of equations:
\begin{equation}\label{eqn:classical_alg_system}
    \begin{cases}
        \begin{alignedat}{4}
            &\mathcal{O}(\eps^{-2}) & &\mathcal{A}_{yy} u_0 &= 0, \\
            &\mathcal{O}(\eps^{-1}) & &\mathcal{A}_{yy} u_1 + \mathcal{A}_{xy} u_0 &= 0,\\
            &\mathcal{O}(1) &\partial_{tt} u_0 ~+~ &\mathcal{A}_{yy} u_2 + \mathcal{A}_{xy} u_1 + \mathcal{A}_{xx} u_0 \qquad&= f,\\
            &\mathcal{O}(\eps) &\partial_{tt} u_1 ~+~ &\mathcal{A}_{yy} u_3 + \mathcal{A}_{xy} u_2 + \mathcal{A}_{xx} u_1 &= 0,\\
            &\text{and so on...}\qquad
        \end{alignedat}
    \end{cases}
\end{equation}

The $\mathcal{O}(\eps^{-2})$ problem gives us $u_0 \in \ker(\mathcal{A}_{yy}) = \C_y$ (Remark \ref{rmk:ker_A0}). Thus, $u_0$ is constant in $y$, i.e.\,$u_0(t,x,y)=u_0(t,x)$. Following \cite{allaire_lamacz_rauch_2022_crime_pays}, we introduce a notation for projection of a function $y\mapsto u(t,x,y)$ onto $\C$:
\begin{definition}
    For a function $u(t,x,y)$, define the projection $\pi$ by $(\pi u)(t,x) = \int_Y u(t,x,y) dy$, and $\pi^\perp = I-\pi$.
\end{definition}

With this notation, we have
\begin{align}\label{eqn:u0_full}
    u_0(t,x,y) = \cancel{\pi^\perp u_0} + \pi u_0.
\end{align}

For the $\mathcal{O}(\eps^{-1})$ problem, since $\nabla_y u_0 = 0$, the term $\mathcal{A}_{xy}u_0 = \sum_{i,j} \partial_{y_i}\A_{ij}(y) \partial_{x_j} u_0 (x)$ exhibits a separation of variables, and thus we seek an ansatz of the form
\begin{align}\label{eqn:u1_full}
    u_1(t,x,y) 
    = \sum_{j=1}^d N_j(y) \frac{\partial u_0}{\partial x_j} (t,x) + \tilde{u}_1(t,x) 
    = \underbrace{\mathbf{N}(y) \cdot (\nabla_x u_0)(t,x)}_{= \pi^\perp u_1} + \underbrace{\tilde{u}_1(t,x)}_{= \pi u_1}.
\end{align}
This implies that $N_j(y)$ has to solve the cell-problem $\mathcal{A}_{yy} N_j = \nabla_y \cdot (\A(y)e_j)$, which has a unique solution in $\dot{H}^1_{\text{per}}(Y)$. $\mathbf{N} = \mathbf{N}^{(1)} = (N_1,\cdots,N_d)^\top$ is the (classical) first-order corrector. Here we define
\begin{definition}\label{defn:H1per_meanzero}
    $\dot{H}^1_{\text{per}}(Y)$ for the subspace of $H^1(Y)$ consisting of mean-zero periodic (in $y$) functions.
\end{definition}

For the $\mathcal{O}(1)$ problem, the Fredholm alternative asserts that for $u_2$ to be well-defined, we need to impose the condition $\int_Y (- \partial_{tt} u_0 - \mathcal{A}_{xy} u_1 - \mathcal{A}_{xx} u_0 + f)dy = 0$. This amounts to the following equation for $u_0(t,x)$ (``homogenised equation"):
\begin{align}\label{eqn:u0_hom}
    \partial_{tt}u_0 - \Div\bigl( \A^{\hom} \nabla u_0 \bigr) = f.
\end{align}

One proceeds down the system \eqref{eqn:classical_alg_system}, using information of $u_0,\cdots,u_{n-1}$ to determine $u_n$. For the general $\mathcal{O}(\eps^{n-2})$ problem, $n\geq 2$, we determine $\pi u_{n-2}$ through the well-posedness condition on $u_n$, and the $n$th-order corrector $\mathbf{N}^{(n)}$ through a separation of variables. This concludes the review.

\subsubsection*{The structure of the classical two-scale expansion.} In \cite[Sect~2,3]{allaire_lamacz_rauch_2022_crime_pays}, the authors propose an equivalent way of carrying out the classical two-scale expansion (see also \cite{bakhvalov_panasenko, kirill_valery_variational}). \textbf{Step 1.} Rather than going down the system \eqref{eqn:classical_alg_system} in increasing powers of $\eps$, we split the procedure into the terms $\pi^\perp u_n$ (``oscillatory hierarchy'') and the terms $\pi u_n$ (``non-oscillatory hierarchy"). \textbf{Step 2.} Write \eqref{eqn:wave_eqn_formal} as equation of formal series 
\begin{align}\label{eqn:wave_eqn_formal_v2}
    \Bigg(\partial_{tt} + \frac{1}{\eps^2} \mathcal{A}_{yy} + \frac{1}{\eps} \mathcal{A}_{xy} + \mathcal{A}_{xx} \Bigg) \sum_{n=0}^{\infty} \eps^n u_n = \sum_{n=-2}^\infty \eps^n w_n,
\end{align}
which is to be understood as a system of equations in like powers of $\eps$.
Observe that we now have to make a choice of distributing $f(t,x)$ into $w_n(t,x,y)$'s. Note that $w_n$ is further split into its oscillatory $\pi^\perp w_n$ and non-oscillatory $\pi w_n$ parts.

\textbf{Step 3.} We focus on the oscillatory hierarchy $(\pi^\perp u_n)$. \textit{Let us make the choice}
\begin{align}\label{eqn:choice_osc_classical}
    \pi^\perp w_n = 0, \qquad \text{for all} \qquad n\geq -2.
\end{align}
(which is natural because $f(t,x)$ does not depend on $y$). The remarkable fact is that this choice is equivalent (!) to the formal double series expansion which would get in the classical procedure \cite{bakhvalov_panasenko}
\begin{equation}\label{eqn:double_series}
    \begin{split}
        &\sum_{n=0}^\infty \eps^n u_n \sim \left( \sum_{j=0}^\infty \eps^j \chi_j \right) \left( \sum_{k=0}^\infty \eps^k \pi u_k \right) \\
        &\quad = \underbrace{u_0}_{\pi u_0} + \eps ( \underbrace{\chi_1 \pi u_0}_{\pi^\perp u_1} + \pi u_1 ) 
        + \eps^2 ( \underbrace{\chi_2 \pi u_0 + \chi_1 \pi u_1}_{\pi^\perp u_2} + \pi u_2 )
        + \eps^3 ( \underbrace{\chi_3 \pi u_0 + \chi_2 \pi u_1 + \chi_1 \pi u_2}_{\pi^\perp u_3} + \pi u_3 ) + \cdots.
    \end{split}
\end{equation}
For each $n\geq 0$, $\pi^\perp u_n$ is described in terms of $\pi u_0,\, \dots, \,\pi u_n$ and $\chi_j(y,\partial_t,\nabla_x)$, $0 \leq j \leq n-1$. The operators $\chi_j$ encode the $j$th-order (hyperbolic) correctors, and are defined inductively. We refer the reader to \cite[Definition~2.2]{allaire_lamacz_rauch_2022_crime_pays} for the precise definition of $\chi_j$ and \cite[Theorem~2.5]{allaire_lamacz_rauch_2022_crime_pays} for the statement of the equivalence of \eqref{eqn:choice_osc_classical} with \eqref{eqn:double_series}. The symbol $\chi_j(y,\cdot,\cdot)$ is a homogeneous polynomial of degree $j$, and its coefficients (as functions of $y$) belong to the space $\dot{H}^1_{\text{per}}(Y)$ (Definition \ref{defn:H1per_meanzero}). We have for instance, $\chi_1(y,\partial_t,\nabla_x) = \mathbf{N}(y)\cdot \nabla_x$.


\textbf{Step 4.} We turn to the non-oscillatory hierarchy ($\pi u_n$). Equation \eqref{eqn:wave_eqn_formal_v2} gives us 
\begin{align}
    \begin{cases}
        \mathcal{O}(\eps^{-2}): \quad \mathcal{A}_{yy} u_0 = w_{-2}, \\
        \mathcal{O}(\eps^{-1}): \quad \mathcal{A}_{yy} u_1 + \mathcal{A}_{xy} u_0 = w_{-1},
    \end{cases}
    \quad \text{which implies that} \quad
    \pi w_{-2} = 0 \quad \text{and} \quad \pi w_{-1} = 0. \label{eqn:nonosc_classical_base}
\end{align}
Moreover, it turns out the the choice \eqref{eqn:choice_osc_classical} forces an expression on the non-oscillatory parts of $w_n$, $n\geq 0$, as follows:

\begin{definition}
    Define the following constant coefficient operator of degree $n$:
    \begin{align}
        a_n^\ast\bigl(\partial_t,\nabla_x\bigr) \equiv a_n^\ast := \pi \bigl( (\partial_{tt} - \mathcal{A}_{xx})\chi_{n-2} - \mathcal{A}_{xy}\chi_{n-1} \bigr), \qquad n\geq 1.
    \end{align}
    We have for instance, $a_-^\ast = 0$ and $a_2^\ast = \partial_{tt} - \Div( \A^{\hom} \nabla)$ (the classical homogenised operator).
\end{definition}

Then, for $n\geq 0$, we have an equation relating $\pi w_n$ to $\pi u_0, \cdots, \pi u_n$  (cf. \eqref{eqn:u0_hom}):
\begin{alignat}{5}
    &\pi w_n 
    = \sum_{j=0}^n a_{j+2}^\ast \pi u_{n-j}
    &\text{By \cite[Theorem 2.10]{allaire_lamacz_rauch_2022_crime_pays}.} \label{eqn:crimepays_thm210}\\
    &=\begin{cases}
        \begin{alignedat}{6}
            a_2^\ast \pi u_n + 0 + a_4^\ast \pi u_{n-2} + \cdots 
            &+ a_{n}^\ast \pi u_2 &&+ 0 &&+ a_{n+2}^\ast \pi u_0 ~~~
            &&\text{if $n$ is even,} \\[0.1em]
            a_2^\ast \pi u_n + 0 + a_4^\ast \pi u_{n-2} + \cdots 
            &+ 0 &&+ a_{n+1}^\ast \pi u_1 &&+ 0
            &&\text{if $n$ is odd.} 
        \end{alignedat}
    \end{cases}
    &\text{By \cite[Theorem 2.13]{allaire_lamacz_rauch_2022_crime_pays}.} \label{eqn:crimepays_thm213}
\end{alignat}

\textbf{Step 5.} Finally, \textit{let us make the choice} (cf. system \eqref{eqn:classical_alg_system})
\begin{align}\label{eqn:nonosc_classical_wfirsttwo}
    \pi w_{0} = f, \qquad \text{and} \qquad \pi w_{n} = 0 \quad \text{for $n\geq 1$}.
\end{align}

Then, by equating like powers of $\eps$, we obtain a hierarchy of homogenised equations
\begin{align}\label{eqn:classical_alg_homo}
    \begin{cases}
        \mathcal{O}(1) &a_2^\ast \pi u_0 = f, \\
        \mathcal{O}(\eps^1) &a_2^\ast \pi u_1 = 0,\\
        \mathcal{O}(\eps^2) &a_2^\ast \pi u_2 = -a_4^\ast \pi u_0,\\
        \mathcal{O}(\eps^3) &a_2^\ast \pi u_3 = -a_4^\ast \pi u_1,\\
        \mathcal{O}(\eps^4) &a_2^\ast \pi u_4 = -a_4^\ast \pi u_2 - a_6^\ast \pi u_0,\\
        \mathcal{O}(\eps^5) &a_2^\ast \pi u_5 = -a_4^\ast \pi u_3 - a_6^\ast \pi u_1,\\
        \text{and so on...} 
    \end{cases}
\end{align}

The $\mathcal{O}(\eps)$ equation gives $\pi u_1 = 0$, which in turn gives $\pi u_{2k+1} = 0$ for all $k\geq 0$ (``leap-frog structure"). As for the terms $\pi u_{2k},$ we successively apply a standard energy estimate to obtain the following bound. 
\begin{theorem}\label{thm:secular_growth}
    \cite[Theorem~2.15]{allaire_lamacz_rauch_2022_crime_pays}
    For each non-zero multi-index $\alpha \in \N^{1+d}$ and $k\geq 0$, There exists $C=C(f,\alpha,k)>0$ such that
    \begin{align}
        \bigl\|\, \nabla_{t,x}^\alpha u_{2k}(t), \nabla_{t,x}^\alpha u_{2k+1}(t) \, \bigr\|_{L^2(\R_x^d \times \mathbb{T}_y^d)} \leq C \langle t \rangle^k, 
        \qquad \langle t \rangle := \sqrt{1+t^2}.
    \end{align}
\end{theorem}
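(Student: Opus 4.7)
The plan is to combine the leap-frog structure of \eqref{eqn:classical_alg_homo}, a standard energy estimate for the constant-coefficient homogenised wave operator $a_2^\ast=\partial_{tt}-\Div(\A^\hom\nabla)$, and the oscillatory/non-oscillatory decomposition \eqref{eqn:double_series}, proceeding by induction on $k$. By Assumption \ref{assumption} and causality for the homogenised wave equation, each $\pi u_n$ may be solved from \eqref{eqn:classical_alg_homo} with zero Cauchy data. The $\mathcal{O}(\eps)$ line of \eqref{eqn:classical_alg_homo} yields $\pi u_1=0$, and a straightforward induction on the odd lines (whose right-hand sides involve only odd-indexed $\pi u_j$) gives $\pi u_{2k+1}=0$ for all $k\ge 0$; the full $u_n$ is then reconstructed through the correctors $\chi_j$ via \eqref{eqn:double_series}.

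The core estimate, proved by induction on $k$ at every Sobolev level, is that for every $m\in\N$,
\begin{equation}
\sum_{1\le|\alpha|\le m}\bigl\|\nabla_{t,x}^\alpha \pi u_{2k}(t)\bigr\|_{L^2(\R^d_x)}\le C_{m,k}\langle t\rangle^k.
\end{equation}
For $k=0$, applying $\nabla_{t,x}^\alpha$ to $a_2^\ast\pi u_0=f$ and invoking the wave energy identity (uniform positive-definiteness of $\A^\hom$ provides the required coercivity) bounds $\|\nabla_{t,x}\nabla_{t,x}^\alpha\pi u_0(t)\|_{L^2}$ by $\int_0^t\|\nabla_{t,x}^\alpha f(s)\|_{L^2}\,ds$, which is uniformly bounded in $t$ since $\operatorname{supp}f\subset[0,1]_t\times\R^d_x$. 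For the inductive step, combining \eqref{eqn:crimepays_thm213}, \eqref{eqn:nonosc_classical_wfirsttwo}, and the leap-frog structure gives
\begin{equation}
a_2^\ast\pi u_{2k}=-\sum_{j=1}^{k}a_{2j+2}^\ast\,\pi u_{2(k-j)},\qquad k\ge 1.
\end{equation}
Each $a_{2j+2}^\ast$ is a constant-coefficient operator of order $2j+2$, so after commuting with $\nabla_{t,x}^\alpha$ and invoking the inductive hypothesis at a sufficiently high Sobolev level the right-hand side is bounded in $L^2$ by $C\sum_{j=1}^{k}\langle s\rangle^{k-j}\le C\langle s\rangle^{k-1}$. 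A single application of the energy identity then picks up one further power of $\langle t\rangle$, producing the claimed $C\langle t\rangle^k$ bound.

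The oscillatory parts are controlled termwise via \eqref{eqn:double_series}. Because $(\partial_t,\nabla_x)$-derivatives commute with the $y$-coefficients of $\chi_j$, one has $\nabla_{t,x}^\alpha\bigl[\chi_j(y,\partial_t,\nabla_x)\pi u_{n-j}\bigr]=\chi_j(y,\partial_t,\nabla_x)\nabla_{t,x}^\alpha\pi u_{n-j}$; taking $L^2(\R^d_x\times\mathbb{T}^d_y)$-norms and using the $L^2(Y)$-boundedness of the coefficients of $\chi_j$ (which lie in $\dot H^1_{\text{per}}(Y)$) reduces the bound to the $\pi u_{2\ell}$-estimate proved above. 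The maximum over surviving contributions (those with even $n-j$, so $\pi u_{n-j}\ne 0$) is $\langle t\rangle^k$, which confirms the claim both for $u_{2k}$ and for $u_{2k+1}$ (where $\pi u_{2k+1}=0$ and only the odd-$j$ oscillatory terms survive).

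The main obstacle is regularity bookkeeping: each $a_{2j+2}^\ast$ and each $\chi_j$ consumes additional derivatives, so the induction on $k$ must be propagated through a scale of Sobolev norms with $m$ growing with $k$. This is handled cleanly because Assumption \ref{assumption} supplies $f\in C^\infty(\R;H^\infty(\R^d))$, making arbitrarily many derivatives of the source available. A subsidiary technical point is the causal vanishing of each $u_n$ for $t\le 0$, ensuring that the energy identity is applied with no boundary contribution at $t=0$.
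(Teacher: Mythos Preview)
Your proposal is correct and follows precisely the approach sketched in the paper (which cites \cite{allaire_lamacz_rauch_2022_crime_pays} and indicates only that one ``successively appl[ies] a standard energy estimate''): leap-frog elimination of the odd $\pi u_n$, induction on $k$ via the energy inequality for $a_2^\ast$ applied to the even lines of \eqref{eqn:classical_alg_homo}, and termwise control of the oscillatory parts through the decomposition \eqref{eqn:double_series} using the $L^2(Y)$-boundedness of the corrector coefficients. Your explicit treatment of the Sobolev bookkeeping (the need to raise $m$ with $k$ to absorb the orders of $a_{2j+2}^\ast$ and $\chi_j$, available thanks to $f\in C^\infty(\R;H^\infty)$) and of causality at $t=0$ fills in exactly the details the survey omits.
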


The authors of \cite{allaire_lamacz_rauch_2022_crime_pays} refer to Theorem \ref{thm:secular_growth} as ``problems" of secular growth since it implies the following result.
\begin{theorem}\label{thm:classical_longtime}
    \cite[Theorem~3.1]{allaire_lamacz_rauch_2022_crime_pays}
    For each $k \in \mathbb{N} \cup \{ 0 \}$, define the truncated ansatz of level $k$ for $\sum \eps^n u_n$ by
    \begin{align}
        U^k(\eps,t,x,y) := \sum_{n=0}^{2k} \eps^n u_n(t,x,y) + \eps^{2k+1} \pi^\perp u_{2k+1} + \eps^{2k+2} \pi^\perp u_{2k+2}.
    \end{align}
    Then there is a constant $C = C(f,k)$ such that
    \begin{align}\label{eqn:classical_longtime}
        \left\| \nabla_{t,x} \left[ u_\eps(t,x) - U^k\biggl(\eps,t,x,\frac{x}{\eps}\biggr) \right] \right\|_{L^2(\R_x^d)} \leq C \min\Bigl\{ \eps^{2k+1} \langle t \rangle^{k+1}, \eps^{2k+2} \langle t \rangle^{k+2} \Bigr\}.
    \end{align}
\end{theorem}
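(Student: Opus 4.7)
My plan is to execute the standard residual-plus-energy scheme for two-scale expansions, with some additional care taken to extract both branches of the minimum from the hierarchies built in Steps~1--5 above. I would set $e_\eps(t,x):=u_\eps(t,x)-U^k(\eps,t,x,x/\eps)$. Under Assumption~\ref{assumption}, causality for the hyperbolic systems governing $u_\eps$ and each $u_n$ forces all of these to vanish for $t\leq 0$, so $e_\eps$ carries zero Cauchy data at $t=0$. The first step is then to apply $\partial_{tt}+\mathcal{A}_\eps$ to $U^k(\eps,t,x,x/\eps)$ via the chain rule and the decomposition $\mathcal{A}_\eps=\eps^{-2}\mathcal{A}_{yy}+\eps^{-1}\mathcal{A}_{xy}+\mathcal{A}_{xx}$, and to collect powers of $\eps$. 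For $n\in\{-2,-1,\ldots,2k\}$ the cell problems~\eqref{eqn:classical_alg_system} and the homogenised hierarchy~\eqref{eqn:classical_alg_homo} will cancel every coefficient apart from the $\eps^0$ one, which reproduces $f$. Since $U^k$ contains no terms at indices $\geq 2k+3$ and only the oscillatory parts at $2k+1, 2k+2$, the residual
\[
R_\eps(t,x):=(\partial_{tt}+\mathcal{A}_\eps)U^k\bigl(\eps,t,x,\tfrac{x}{\eps}\bigr) - f(t,x)
\]
will reduce to $\eps^{2k+1}P_1(t,x,x/\eps)+\eps^{2k+2}P_2(t,x,x/\eps)$, where $P_1, P_2$ are explicit combinations of fixed $(t,x,y)$-derivatives of $u_{2k+1}$ and $\pi^\perp u_{2k+2}$.

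Next, since $e_\eps$ satisfies $(\partial_{tt}+\mathcal{A}_\eps)e_\eps=-R_\eps$ with zero Cauchy data, the standard energy identity (testing against $\partial_t e_\eps$ and using uniform ellipticity of $\A$) produces
\[
\bigl\|\nabla_{t,x}e_\eps(t)\bigr\|_{L^2(\R^d)} \leq C\int_0^t \bigl\|R_\eps(s)\bigr\|_{L^2(\R^d)}\,ds.
\]
For the integrand, Theorem~\ref{thm:secular_growth} supplies $\|\nabla^\alpha_{t,x}u_n(s)\|_{L^2}\leq C\langle s\rangle^{\lfloor n/2\rfloor}$ for nonzero $\alpha$; the crucial refinement is that expanding $\pi^\perp u_{2k+2}=\sum_{j\geq 1}\chi_j\,\pi u_{2k+2-j}$ via~\eqref{eqn:double_series} and killing the odd-$j$ entries through the leap-frog identity $\pi u_{2m+1}\equiv 0$ shows that $\pi^\perp u_{2k+2}$, unlike the full $u_{2k+2}$, grows only as $\langle s\rangle^k$. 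Feeding these bounds into $R_\eps$ yields $\|R_\eps(s)\|_{L^2}\leq C\eps^{2k+1}\langle s\rangle^{k}$, and time integration delivers the first branch $C\eps^{2k+1}\langle t\rangle^{k+1}$ of the minimum.

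For the second branch I would iterate the preceding argument at level $k+1$: applying the first branch to $u_\eps-U^{k+1}$ gives $\bigl\|\nabla_{t,x}[u_\eps(t,x)-U^{k+1}(\eps,t,x,x/\eps)]\bigr\|_{L^2(\R^d)}\leq C\eps^{2k+3}\langle t\rangle^{k+2}$. The telescoping identity
\[
U^{k+1}-U^k = \eps^{2k+2}\,\pi u_{2k+2} + \eps^{2k+3}\,\pi^\perp u_{2k+3} + \eps^{2k+4}\,\pi^\perp u_{2k+4}
\]
(using $\pi u_{2k+1}=0$ once more), combined with Theorem~\ref{thm:secular_growth} and the $\eps^{-1}$ factor produced by $\nabla_y$ under the substitution $y=x/\eps$, bounds the $H^1$ norm of the above difference (at $y=x/\eps$) by $C\eps^{2k+2}\langle t\rangle^{k+1}$. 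The triangle inequality then gives $C\eps^{2k+2}\langle t\rangle^{k+2}$, a mild weakening of the sharper $C\eps^{2k+2}\langle t\rangle^{k+1}$ that this procedure actually produces.

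The main obstacle will be the residual bookkeeping in the first step: one must verify that every cancellation in~\eqref{eqn:classical_alg_system} is preserved when $\pi u_{2k+1},\pi u_{2k+2}$ and all $u_n$ with $n\geq 2k+3$ are discarded, and correctly identify which fixed derivatives of $u_{2k+1}$ and $\pi^\perp u_{2k+2}$ survive in $P_1, P_2$. The second crux is the secular-growth refinement: without the observation that $\pi^\perp u_{2k+2}$ inherits only $\langle s\rangle^k$ growth rather than the $\langle s\rangle^{k+1}$ granted to the full $u_{2k+2}$ by Theorem~\ref{thm:secular_growth}, the energy estimate would yield only $C\eps^{2k+1}\langle t\rangle^{k+2}$, missing the first branch of the minimum entirely.
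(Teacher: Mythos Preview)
Your proposal is correct and matches what the paper itself says: the paper does not give a proof but merely remarks that ``the proof of Theorem~\ref{thm:classical_longtime} is routine: one computes an explicit formula for the remainder and applies standard stability estimates.'' Your residual-plus-energy scheme, together with the key refinement that $\pi^\perp u_{2k+2}$ inherits only $\langle t\rangle^k$ growth via the leap-frog structure, and the telescoping argument for the second branch, is precisely this routine argument fleshed out.
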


The proof of Theorem \ref{thm:classical_longtime} is routine: one computes an explicit formula for the remainder and applies standard stability estimates. We observe that by taking $k$ large enough, the classical two-scale expansion $U^k$ provides a valid approximation of $u_\eps$ to arbitrary accuracy $\eps^\alpha$, but only up to times $t \leq \eps^{-2+\delta}$ (as $k\rightarrow \infty$, $\delta \downarrow 0$.), both in the energy norm $\| \nabla_{t,x} \cdot \|_{L^2(\R_x^d)}$ and in the $L^2(\R_x^d)-$norm.


It is crucial to point out that the $\mathcal{O}(\langle t \rangle^k)$ estimate in Theorem \ref{thm:secular_growth} directly trickles down to the factor $\langle t \rangle^k$ in the RHS of \eqref{eqn:classical_longtime}, and therein lies the ``problem": One does not have a uniform-in-$k$ control of the terms $u_n(t)$, and as a result, is restricted to times $t \leq \eps^{-2+\delta}$. Moreover, the timescale $t \sim \eps^{-2}$ is critical, as a one-dimensional example shows that $U^k(\eps,c/\eps^{2+\delta}, x, x/\eps)$ is unbounded in $x$ for fixed $k>0$, as $\eps\downarrow 0$, whereas the exact solution $u_\eps$ satisfies $\sup_{\eps} \| u_\eps \|_{L^\infty(\R_t \times \R_x)} < \infty$ \cite[Appendix A]{allaire_lamacz_rauch_2022_crime_pays}.

Returning to the comment after \eqref{eqn:spectral_germ_overall2} on the order-sharpness of \eqref{eqn:spectral_germ_overall2}, in case the reader would like to compare the error of \eqref{eqn:spectral_germ_overall2} with the $k=0$ case of \eqref{eqn:classical_longtime}, we point out that both guarantees a valid approximation up to times $t\leq \eps^{-1+\delta}$, but in different norms. Moreover the leading-order approximation $u_{\hom} = u_0$ in \eqref{eqn:spectral_germ_overall2} differs from the $0th-$order ansatz $U^0 = u_0 + \eps \chi_1 u_0 + \eps^2 \chi_2 u_0$.

\begin{remark}
    The leap-frog structure stems from \eqref{eqn:crimepays_thm213}, and is a unique feature of (scalar) WEs. For \textit{systems} of WEs, there is no leap-frog structure, and the secular growth of $u_k$ (as opposed to $u_{2k}$) in Theorem \ref{thm:secular_growth} becomes $\langle t \rangle^{k}$. The critical timescale is then $t\sim \eps^{-1}$.
\end{remark}

\subsection{Beyond the classical two-scale expansion}

It is now clear that to approximate $u_\eps$ up to times $t \sim \eps^{-2}$ and beyond, one has to leave the confines of the classical expansion. More precisely, a proposed ansatz has to address the secular growth problem. We shall outline the derivation of a few ansatze that overcame this problem.

\subsubsection{Criminal ansatz} In \cite{allaire_lamacz_rauch_2022_crime_pays}, Allaire, Lamacz-Keymling, and Rauch propose to seek an expansion of $u_\eps(t,x)$ of the form
\begin{align}\label{eqn:criminal_vseries}
    \sum_{n=0}^\infty \eps^n v_n, \qquad \text{where $v_n(\eps,t,x,y)$ is $\Z^d-$periodic in $y$.}
\end{align}
And under the equation \eqref{eqn:wave_eqn_formal_v2} (with $\sum \eps^n u_n$ replaced by $\sum \eps^n v_n$), make the following choices:
\begin{enumerate}[label=(\roman*)]
    \item \label{eqn:choice_osc_crmiminal} (Oscillatory hierarchy) Set $\pi^\perp w_n =0$ for all $n\geq -2$. (The same as \eqref{eqn:choice_osc_classical}.)
    \item \label{eqn:choice_nonosc_crmiminal} (Non-oscillatory hierarchy) Keep $\pi v_0$ as a free term. Impose two conditions: $\pi v_n = 0$ for $n\geq 1$, and 
    \begin{align}
        &\sum_{n=0}^\infty \eps^n (\pi w_n)(\eps,t,x) = f(t,x)
        &&\parbox{15em}{As opposed to \eqref{eqn:nonosc_classical_wfirsttwo}-\eqref{eqn:classical_alg_homo}. \\ Note that $\pi w_{n-2}=\pi w_{n-1}=0$ by \ref{eqn:choice_osc_crmiminal}.} \nonumber\\
        \Longleftrightarrow \quad
        &\sum_{n=0}^\infty \eps^n \sum_{j=0}^{n} a_{j+2}^\ast \pi v_{n-j} = f
        &&\text{By \cite[Theorem~2.10]{allaire_lamacz_rauch_2022_crime_pays}, cf. \eqref{eqn:crimepays_thm210}.} \nonumber\\
        \Longleftrightarrow \quad
        &\eps^0 \left( a_2^\ast \pi v_0 \right) 
        + \eps \left( a_2^\ast \pi v_1 + a_3^\ast \pi v_0 \right)
        + \eps^2 \left( a_2^\ast \pi v_2 + a_3^\ast \pi v_1 + a_4^\ast \pi v_0 \right)
        + \cdots = f.  \span\span \nonumber\\
        \Longleftrightarrow \quad
        & \left( a_2^\ast \pi v_0 \right)
        + \eps \left( a_2^\ast \pi v_1 \right)
        + \eps^2 \left( a_2^\ast \pi v_2 + a_4^\ast \pi v_0 \right)
        + \cdots = f.
        &&\parbox{15em}{By \cite[Theorem~2.13]{allaire_lamacz_rauch_2022_crime_pays}, \\ $a_k^\ast = 0$ for odd $k$, cf. \eqref{eqn:crimepays_thm213}.} \nonumber\\
        \Longleftrightarrow \quad
        &\left[ a_2^\ast(\partial_t,\nabla_x) + \eps^2 a_4^\ast(\partial_t,\nabla_x) + \eps^4 a_6^\ast(\partial_t,\nabla_x) + \cdots \right] \pi v_0 = f. \label{eqn:criminal_formal_homog_eqn_v1}
    \end{align}
    The final equivalence follows from our assumption that $\pi v_n = 0$, for $n\geq 1$.
\end{enumerate}

A few remarks are in order to motivate the choices \ref{eqn:choice_osc_crmiminal}-\ref{eqn:choice_nonosc_crmiminal}: By keeping the choice \ref{eqn:choice_osc_crmiminal} from the classical two-scale procedure, we retain the double-series/separation-of-variables structure \eqref{eqn:double_series} for our expansion for $u_\eps$, where the oscillatory terms $\pi^\perp v_n$ are expressed in terms of the corrector operators $\chi_j(y,\partial_t,\nabla_x)$ (defined through \textit{the same formulae as the classical procedure}) and the non-oscillatory terms $\pi v_n$. That is to say, since $\pi v_n = 0$ for $n\geq 1$, we have (formally) $\pi^\perp v_n = \chi_n \pi v_0$.

\begin{remark}
    In contrast, one has $\pi^\perp u_n = \chi_n \pi u_0 + \cdots + \chi_1 \pi u_{n-1}$ in the classical procedure.
\end{remark}

The authors refer to the dependence of $v_n$ on $\eps$, and the condition \eqref{eqn:criminal_formal_homog_eqn_v1} (obtained by mixing equations of different powers of $\eps$) as ``asymptotic crimes".

Due to such criminal acts, one has to take care to interpret the formal ``homogenised equation" $\eqref{eqn:criminal_formal_homog_eqn_v1}$. Indeed, a simple truncation of the candidate equation \eqref{eqn:criminal_formal_homog_eqn_v1} typically leads to ill-posed problems. Note also, that by allowing $v_n$'s to depend on $\eps$, we should expect $v_n$ to differ between truncation levels. That is, with truncations $\sum_{n=0}^N \eps^n v_n^N$ and $\sum_{n=0}^M \eps^n v_n^M$, we generally have $v_n^N \neq v_n^M$, on the contrary to the law-abiding classical two-scale expansion.

We shall now briefly describe the steps taken by the authors to turn \eqref{eqn:criminal_formal_homog_eqn_v1} into a well-posed problem:

\begin{enumerate}
    \item \label{criminal_normal_form} \textbf{Algebraic step: Normal-form transformation.} Keep $a_2^\ast(\partial_t,\nabla_x)$ as is. Remove the $\partial_t$ from $a_4^\ast$, $a_6^\ast$, $\cdots$, through an ``elimination algorithm" on \eqref{eqn:criminal_formal_homog_eqn_v1}: There exists \textit{uniquely determined} homogeneous operators $R_{2j}(\partial_t, \nabla_x)$ and $\widetilde{a}_{2j}(\nabla_x)$ of degree $2j$, such that as a formal series \cite[Proposition\,4.1]{allaire_lamacz_rauch_2022_crime_pays},
    \begin{align}\label{eqn:elimination_alg}
        a_2^\ast(\partial_t,\nabla_x) + \sum_{j=2}^\infty \widetilde{a}_{2j}(\nabla_x) = \left[ I + \sum_{j=1}^\infty R_{2j}(\partial_t, \nabla_x) \right] \left[ \sum_{j=1}^\infty a^\ast_{2j}(\partial_t,\nabla_x) \right].
    \end{align}
    Homogeneity implies for instance, that $R_{2j}(\eps\partial_t,\eps\nabla_x) = \eps^{2j}R_{2j}(\partial_t,\nabla_x)$. Thus, by multiplying \eqref{eqn:criminal_formal_homog_eqn_v1} on both sides on the left by $I+\sum R_{2j}(\eps \partial_t, \eps \nabla_x)$, we have, by \eqref{eqn:elimination_alg},
    \begin{align}\label{eqn:criminal_formal_homog_eqn_v2}
        \left[ a_2^\ast(\partial_t,\nabla_x) + \sum_{j=2}^\infty \eps^{2j-2} \widetilde{a}_{2j}(\nabla_x) \right] \pi v_0
        = \left[ I+\sum_{j=1}^\infty \eps^{2j} R_{2j}(\partial_t, \nabla_x) \right] f.
    \end{align}
    
    We have thus successfully ``de-mixed" the space and time derivatives on the LHS of \eqref{eqn:criminal_formal_homog_eqn_v1}, at the expense of (slightly) modifying the RHS.

    \item \label{criminal_filtering} \textbf{Analytic step: Filtering.} Apply $\psi(\eps^{\alpha}(-i)\nabla_x)$ to the RHS of \eqref{eqn:criminal_formal_homog_eqn_v2}, with fixed $0<\alpha<1$ and $\psi \in C^\infty_c(\R^d)$ with $\psi = 1$ in the neighborhood of the origin. The resulting equation at truncation level $k$ is
    \begin{equation}\label{eqn:criminal_formal_homog_eqn_v3}
        \begin{split}
            &\left[ a_2^\ast(\partial_t,\nabla_x) + \eps^2 \widetilde{a}_4(\nabla_x) + \cdots + \eps^{2k} \widetilde{a}_{2k+2}(\nabla_x) \right] \pi v_0^k
            = \psi(\eps^{\alpha}(-i)\nabla_x) \left[ I + \eps^2 R_{2} + \cdots + \eps^{2k} R_{2k} \right] f.
        \end{split}
    \end{equation}
    
\end{enumerate}

Thus, at truncation level $k$, we have a uniquely-defined non-oscillatory profile $\pi v_0^k$ by \eqref{eqn:criminal_formal_homog_eqn_v3}. All other non-oscillatory profiles $\pi v_1^k, \pi v_2^k,\cdots$ are set to $0$. The oscillatory terms are simply $\pi^\perp v_0^k = 0$ and $\pi^\perp v_n^k = \chi_n v_0^k$ for $n\geq 1$. The proposed ansatz (at level $k$) is
\begin{align}\label{eqn:criminal_ansatz}
    V^k(\eps,t,x,y) := \left[ I + \eps \chi_1(y,\partial_t,\nabla_x) + \cdots \eps^{2k+2} \chi_{2k+2}(y,\partial_t,\nabla_x) \right] v_0^k(\eps,t,x).
\end{align}

It turns out that the ``criminal ansatz" \eqref{eqn:criminal_ansatz} provides a description of $u_\eps$ that is good to an arbitrary order of accuarcy $\eps$ and timescale $t$:

\begin{theorem}\label{thm:criminal_longtime}
    \cite[Theorem~1.3 and Corollary~1.7]{allaire_lamacz_rauch_2022_crime_pays}
    For each $k \in \mathbb{N} \cup \{ 0 \}$, there exist $C(f,k)>0$ such that
    \begin{align}\label{eqn:criminal_longtime}
        \left\| \nabla_{t,x} \left[ u_\eps(t,x) - V^k(\eps,t,x,\tfrac{x}{\eps}) \right] \right\|_{L^2(\R_x^d)} \leq C \eps^{2k+1} \langle t \rangle,
    \end{align}
    and 
    \begin{align}\label{eqn:criminal_longtime_v0only}
        \left\| \nabla_{t,x} \left[ u_\eps(t,x) - v_0^k(\eps,t,x) \right] \right\|_{L^2(\R_x^d)} \leq C \left( \eps + \eps^{2k+2} \langle t \rangle^2 \right).
    \end{align}
\end{theorem}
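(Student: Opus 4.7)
The plan is to treat $V^k(\eps,t,x,x/\eps)$ as a quasi-solution of the original wave equation and to obtain the estimate through the standard hyperbolic energy identity applied to the remainder $r^k_\eps(t,x) := u_\eps(t,x) - V^k(\eps,t,x,x/\eps)$. First I would compute the residual
\begin{align*}
F^k_\eps(t,x) := \bigl(\partial_{tt} + \mathcal{A}_\eps\bigr)\bigl[V^k(\eps,t,x,x/\eps)\bigr] - f(t,x)
\end{align*}
by applying the three-scale decomposition $\mathcal{A}_\eps = \eps^{-2}\mathcal{A}_{yy} + \eps^{-1}\mathcal{A}_{xy} + \mathcal{A}_{xx}$ at $y=x/\eps$. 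By construction, $F^k_\eps$ organises into contributions from the three design features baked into the ansatz: (i) pieces killed by the oscillatory choice $\pi^\perp w_n = 0$ together with the identities defining the correctors $\chi_j$; (ii) pieces cancelled by the normal-form equation \eqref{eqn:criminal_formal_homog_eqn_v3} satisfied by $v_0^k$; and (iii) leftovers from truncation and from filtering. After these cancellations, the residual consists essentially of finitely many ``tail'' terms of the form $\eps^n \chi_n v_0^k$ with $n \geq 2k+3$, together with a filter-error term of the form $\bigl(I - \psi(\eps^\alpha(-i)\nabla_x)\bigr)g$, where $g$ is a fixed linear combination of derivatives of $f$.

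The analytic heart of the argument, and the main obstacle, is to bound $\|F^k_\eps(t,\cdot)\|_{L^2(\R^d_x)}$ uniformly in $t$ by $C\eps^{2k+1}$. Here the frequency localisation introduced by $\psi(\eps^\alpha(-i)\nabla_x)$ is decisive: since $v_0^k$ solves a constant-coefficient pseudodifferential wave-type equation with filtered right-hand side, its spatial Fourier transform is supported in $|\xi| \lesssim \eps^{-\alpha}$, so each spatial derivative of $v_0^k$ costs at most a factor of $\eps^{-\alpha}$. For an appropriate choice of $\alpha \in (0,1)$ independent of $k$, a careful bookkeeping of the derivatives brought down by $\mathcal{A}_\eps$ shows that every tail term $\eps^n \chi_n v_0^k$ with $n\geq 2k+3$ contributes at worst $\eps^{2k+1}$; meanwhile the filter-error term is of arbitrary order in $\eps$ in $L^2$ thanks to the smoothness of $f$ in Assumption~\ref{assumption}. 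In parallel I would establish uniform-in-$t$ energy bounds for $v_0^k$ in any Sobolev space, using that the filtered datum has compact support in $t$ and that the modified symbol $a_2^\ast(\partial_t,\nabla_x) + \sum_{j\geq 2}\eps^{2j-2}\widetilde{a}_{2j}(\nabla_x)$ remains of hyperbolic type on the support of $\psi(\eps^\alpha\,\cdot\,)$.

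With the pointwise-in-time bound $\|F^k_\eps(t,\cdot)\|_{L^2} \leq C\eps^{2k+1}$ in hand, the proof concludes via the Duhamel/energy identity for the self-adjoint operator $\mathcal{A}_\eps$: because $\text{supp}(f) \subset [0,1]_t \times \R^d_x$ by Assumption~\ref{assumption}, both $u_\eps$ and $V^k(\eps,\cdot,x,x/\eps)$ vanish for $t \le 0$, so $r^k_\eps$ starts from rest, and
\begin{align*}
\|\partial_t r^k_\eps(t)\|_{L^2}^2 + \bigl\langle \mathcal{A}_\eps r^k_\eps(t), r^k_\eps(t)\bigr\rangle \leq \Bigl(\int_0^{|t|}\|F^k_\eps(s)\|_{L^2}\,ds\Bigr)^2;
\end{align*}
uniform ellipticity of $\A$ upgrades the left-hand side to control $\|\nabla_x r^k_\eps(t)\|_{L^2}^2$, yielding \eqref{eqn:criminal_longtime}. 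For the corollary \eqref{eqn:criminal_longtime_v0only}, I would use the triangle inequality $\|\nabla_{t,x}(u_\eps - v_0^k)\|_{L^2} \leq \|\nabla_{t,x}(u_\eps - V^k)\|_{L^2} + \|\nabla_{t,x}(V^k - v_0^k)\|_{L^2}$; the first term is controlled by what was just proved, while the second is handled by expanding $V^k - v_0^k = \sum_{j=1}^{2k+2}\eps^j \chi_j(x/\eps,\partial_t,\nabla_x)v_0^k$ and keeping careful track of the $\eps$-powers gained from each corrector and lost to each spatial derivative under the filter-enforced regularity of $v_0^k$, which produces the $\eps + \eps^{2k+2}\langle t\rangle^2$ split after the $\chi_1$ contribution is isolated from the higher-order tail.
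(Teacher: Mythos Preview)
The paper does not give its own proof of this theorem; it is a cited result from \cite{allaire_lamacz_rauch_2022_crime_pays}, and the only indication of method is the later remark that the argument (like that of Theorem~\ref{thm:spectralansatz_longtime}) ``is a tedious affair, relying on an explicit formula for the remainder and standard stability estimates.'' For the main estimate \eqref{eqn:criminal_longtime} your plan---compute the residual $F^k_\eps$ of the quasi-solution $V^k$, use the corrector identities and the frequency cutoff to bound it by $C\eps^{2k+1}$ uniformly in $t$, then feed this into the hyperbolic energy inequality for $\mathcal{A}_\eps$---is precisely that strategy, so on this part your outline agrees with what the paper indicates.

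Your route to the corollary \eqref{eqn:criminal_longtime_v0only} via the triangle inequality does not close, however. Two obstructions: first, the term $\|\nabla_{t,x}(V^k - v_0^k)\|_{L^2}$ is \emph{not} $O(\eps)$, because $V^k - v_0^k$ contains $\eps\chi_1 v_0^k = \eps\,\mathbf{N}(x/\eps)\cdot\nabla_x v_0^k$, and one application of $\nabla_x$ hits the fast argument and returns $(\nabla_y\mathbf{N})(x/\eps)\,\nabla_x v_0^k$, which is $O(1)$ in $L^2$---the filter only controls slow frequencies of $v_0^k$ and cannot recover this lost power of $\eps$ (this is just the classical fact that the first corrector is required for strong $H^1$ convergence). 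Second, even the first piece of your split fails arithmetically at $k=0$: from \eqref{eqn:criminal_longtime} you get $\eps\langle t\rangle$, but at $\langle t\rangle\sim\eps^{-1/2}$ this is $\eps^{1/2}$, while the target $\eps+\eps^{2}\langle t\rangle^2$ is only $O(\eps)$. So \eqref{eqn:criminal_longtime_v0only} cannot be obtained by simply peeling the correctors off $V^k$; it needs its own residual/stability argument with $v_0^k$ treated directly as the quasi-solution (and a different accounting of what is lost when the correctors are absent).
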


How to use Theorem \ref{thm:criminal_longtime}: For any $N,M\geq 1$, if one desires an approximation (in the energy norm) with accuracy $\eps^{N}$ that is valid on times $|t| \leq C\eps^{-M}$, then one can take $V^k$ with any $k$ satisfying $N+M \leq 2k+2$. Keeping only the non-oscillatory profile $v_0^k(\eps,t,x)$ still gives a valid approximation to arbitrary long timescales, but with a maximum $\mathcal{O}(\eps)$ accuracy.

\paragraph*{Summary.} The criminal ansatz $V^k(\eps,t,x,y) = \sum_{n=0}^k \eps^n v_n(\eps,t,x,y)$ \eqref{eqn:criminal_ansatz} is an asymptotic expansion for $u_\eps$ that retains the double-series structure \eqref{eqn:double_series} of the classical two-scale expansion, with correctors $\chi_j(y,\partial_t,\nabla_x)$ defined in terms of the non-oscillatory terms $\pi v_n$ in the same way. Secular growth is avoided by replacing \eqref{eqn:nonosc_classical_wfirsttwo}-\eqref{eqn:classical_alg_homo} with \ref{eqn:choice_nonosc_crmiminal}, resulting in a valid approximation (in the energy norm) to arbitrary long timescales, by taking $k$ large enough.

\subsubsection{Interlude} 
\paragraph*{A discussion of the literature.} We now make some comments connecting the criminal ansatz \cite{allaire_lamacz_rauch_2022_crime_pays} to the wider literature. It was first observed numerically by Santosa and Symes \cite{santosa_symes_1991} that the classical homogenised description $u_{\hom}$ is inaccurate at times $O(\eps^{-2})$ due to the presence of dispersion at such timescales. To counteract this, the authors proposed an ansatz that is good to $O(\eps^{-2})$ in time, describing a weakly dispersive effective medium, and does not follow the two-scale expansion recipe. The validity of this ansatz was first proven by Lamacz in the one-dimensional setting \cite{lamacz_1D}, and then extended to dimensions $d\leq 3$ by Dohnal, Lamacz, and Schweizer \cite{dohnal_lamacz_schweizer_2014}. \footnote{We point out that the works \cite{lamacz_1D, dohnal_lamacz_schweizer_2014} differ slightly from the setting discussed here (cf.\,Assumption \ref{assumption}). For \cite{lamacz_1D}, $f=0$, with $u_\text{init}$ and $v_\text{init}$ smooth. For \cite{dohnal_lamacz_schweizer_2014}, $f=0$, $v_\text{init}=0$, and $u_\text{init}$ smooth.} The ansatz $w_\eps$ in \cite{dohnal_lamacz_schweizer_2014} solves the well-posed equation
\begin{align}
    \underbrace{\partial_{tt} w_\eps - \mathbb{A}^{\hom} : \nabla^2 w_\eps}_{a_2^\ast(\partial_t, \nabla_x) w_\eps} = \eps^2 \mathbb{E} : \nabla^2  \partial_{tt} w_\eps - \eps^2 \mathbb{F} \odot \nabla^4 w_\eps,
    \qquad w_\eps(\cdot,0) = u_\text{init},
    \qquad \partial_t w_\eps(\cdot,0) = 0.
\end{align}
$w_\eps$ is valid approximation of $u_\eps$ on times $t\leq C\eps^{-2}$ with accuracy $\mathcal{O}(\eps)$ in the $\| \cdot \|_{L^2+L^\infty}$ norm. Here, $\mathbb{E} \in \R^{d\times d}$ and $\mathbb{F} \in \R^{d\times d\times d \times d}$ are non-negative (constant) tensors. They describe the ``weakly-dispersive" effects, and are extracted from the Bloch-wave expansion of $u_\eps$\footnote{Actually, $\mathbb{E}$ and $\mathbb{F}$ have to be suitably modified from Bloch-data to prevent ill-posed issues. This is the analogue of \ref{criminal_normal_form}-\ref{criminal_filtering} in the criminal ansatz. In \cite{dohnal_lamacz_schweizer_2014}, the so-called ``Boussinesq trick" was used. We discuss this below.}, similar to the first step of the spectral germ approach.

Under the class of Bloch-wave/spectral methods, we note the development of an ``approximate Floquet theory" by Benoit and Gloria \cite{benoit_gloria_2019_ballistic_transport}, which is applicable to the stochastic setting. In the context of the (deterministic) WE, \cite{benoit_gloria_2019_ballistic_transport} provides an (spectral) ansatz that is valid for arbitrarily long times, but with maximum accuracy $\mathcal{O}(\eps)$ (cf. \eqref{eqn:criminal_longtime_v0only}). Bridging the gap from \eqref{eqn:criminal_longtime_v0only} to \eqref{eqn:criminal_longtime} is the content of Duerinckx, Gloria, and Ruf \cite{duerinckx_gloria_ruf_2024_spectral_ansatz}, which we shall discuss below.

The work \cite{allaire_lamacz_rauch_2022_crime_pays} provides the first rigorous justification that $t\sim \eps^{-2}$ is the critical timescale for the classical two-scale expansion. Moreover, it is the first work that provides an ansatz $V^k$ that is simultaneously arbitrarily accurate and valid for arbitrarily long times, for all dimensions.

\paragraph*{Connection between physical and frequency space.} Let us explain the connection between the two-scale homogenised data (homogenised operators $a_n^\ast(\partial_t,\nabla_x)$ and correctors $\chi_j(y,\partial_t,\nabla_x)$), and the Bloch data (the spectral information of $\mathcal{A}_\chi \equiv (\nabla_y + i\chi)^\ast \A(y) (\nabla_y + i\chi)$) \cite{zhikov1989}. Note that $\mathcal{A}_\chi$ has discrete spectrum.

We focus on the first eigenpair $(\lambda_-^\chi, \varphi_1^\chi)$ of $\mathcal{A}_\chi$, for small $\chi$. By the min-max principle \cite[Theorem~5.15]{david_borthwick}, the first eigenvalue $\lambda_-^0=0$ of $\mathcal{A}_{\chi=0}$ is simple and isolated. Then perturbation theory \cite{kato_book} applies, and the first band function $\chi \mapsto \lambda_-^\chi$ is analytic in a neighbourhood of $\chi=0$. Moreover, $\lambda_-^{(\cdot)}$ is even, as $(\lambda^\chi, \phi^\chi)$ is an eigenpair for $\mathcal{A}_\chi$ if $(\lambda^\chi, \overline{\phi^\chi})$ is an eigenpair for $\mathcal{A}_{-\chi}$. Thus $\lambda_-^{(\cdot)}$ admits the following Taylor expansion about $\chi=0$:
\begin{align}\label{eqn:first_bloch_evalue}
    \lambda_-^\chi = \mathbb{A}^0 \chi \cdot \chi + \mathcal{O}(|\chi|^4), \qquad \text{for some $\mathbb{A}^0 \in \R_{\text{sym}}^{d\times d}$.}
\end{align}
Write $P_1^\chi$ for the projection of $L^2(Y)$ onto the first eigenspace of $\mathcal{A}_\chi$. By perturbation theory, $\chi\mapsto P_1^\chi$ is also analytic in a neighbourhood of $\chi=0$. Thus we may write
\begin{align}\label{eqn:first_bloch_efct}
    \varphi_1^\chi 
    = P_1^\chi \underbrace{1}_{\text{Remark \ref{rmk:ker_A0}: $P_1^0 = P_{\C}$}}
    = 1 + \mathbf{M}^{(1)} \cdot \chi + \mathbf{M}^{(2)} : \chi \otimes \chi + \mathcal{O}(|\chi|^3),
\end{align}
where $\mathbf{M}^{(1)} = (M_1,\cdots,M_d)^\top$, $\mathbf{M}^{(2)} = (M_{ij})_{1\leq i,j \leq d}$, and $M_i$, $M_{ij}$ are in $L^2(Y)$. Now substitute the expansions \eqref{eqn:first_bloch_evalue}-\eqref{eqn:first_bloch_efct} into the eigenvalue equation 
\begin{align}\label{eqn:evalue_eqn_expand}
    &-\nabla_y \cdot \left( \A(y) \nabla_y \varphi_1^\chi \right)
    - (i\chi)^\ast \A(y) \nabla_y \varphi_1^\chi 
    - \nabla_y \cdot \left( \A(y) (i\chi) \varphi_1^\chi \right)
    - (i\chi)^\ast \A(y) (i\chi) \varphi_1^\chi
    = \lambda_-^\chi \varphi_1^\chi,
\end{align}
and obtain a system of equations by equating like powers of $|\chi|$. Going up to $\mathcal{O}(|\chi|^2)$, one recovers the homogenised matrix and the first-order corrector! To be precise, we have
\begin{align}\label{eqn:bloch_connection_chi2}
    \lambda_-^\chi = \A^{\hom}\chi \cdot \chi + \mathcal{O}(|\chi|^4),
    \qquad \text{and} \qquad
    \varphi_1^\chi = 1 + i\mathbf{N} \cdot \chi + \mathcal{O}(|\chi|^2).
\end{align}

We refer the reader to the survey of Zhikov and Pastukova \cite[Sect~9 and 12.2]{zhikov_pastukhova_2016_opsurvey} for details.

The above procedure suggests that Taylor coefficients of $(\lambda_-^\chi, \varphi_1^\chi)$ could encode higher-order two-scale homogenised data. Indeed, going up to $\mathcal{O}(|\chi|^4)$, Conca, Orive, and Vanninathan showed that \cite{conca_orive_vanninathan2002}
\begin{theorem}\label{thm:connection_evalue}
    \cite[Proposition~1.9]{conca_orive_vanninathan2002} Consider the Taylor expansions of the first band function $\chi \mapsto \lambda_-^\chi$ near $\chi=0$. All odd-derivatives of $\lambda_-^{(\cdot)}$ vanish. Moreover, we have a characterization of the second and fourth Taylor coefficients in terms of two-scale homogenised data: 
    \begin{align}\label{eqn:first_bloch_evalue_expansion}
        \lambda_-^\chi = \mathbb{A}^{\hom} \chi \cdot \chi + \mathbb{D} \odot (\chi \otimes \chi \otimes \chi \otimes \chi) + \mathcal{O}(|\chi|^6),
    \end{align}
    where $\mathbb{A}^{\hom}$ is the homogenised matrix in Theorem \ref{thm:basic_wave_homog}, and the \textit{Brunett tensor} $\mathbb{D} \in \R^{d\times d \times d\times d}$ is defined by
    \begin{align}\label{eqn:brunett_tensor}
        \mathbb{D} := - \int_Y \left( \A(y) \otimes \mathbf{N}^{(2)}(y) + (\A(y)\nabla_y) \otimes \widehat{\mathbf{M}}^{(3)}(y) \right) dy,
    \end{align}
    where $\widehat{\mathbf{M}}^{(3)} = (M_{ijk})_{1\leq i,j,k \leq d}$ solves the cell-problem
    \begin{align}
        \mathcal{A}_{yy} \widehat{\mathbf{M}}^{(3)} = \mathbb{A}^{\hom} \otimes \mathbf{N}^{(1)} + \mathcal{A}_{yy} \mathbf{N}^{(3)}, \qquad M_{ijk} \in \dot{H}^1_{\text{per}}(Y).
    \end{align}
    We have written $\mathbf{N}^{(j)}$ for the $j$th-order correctors from the two-scale expansion.
\end{theorem}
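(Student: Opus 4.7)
The plan is to perform regular perturbation of the simple isolated eigenvalue $\lambda_-^0 = 0$ of $\mathcal{A}_{\chi=0}$, substitute the formal Taylor expansions of $(\lambda_-^\chi, \varphi_1^\chi)$ into the eigenvalue equation \eqref{eqn:evalue_eqn_expand}, and apply the Fredholm alternative order-by-order. I would first establish the vanishing of all odd derivatives, which is a symmetry statement independent of explicit identifications: since $\A$ is real-valued, complex conjugation intertwines $\mathcal{A}_\chi$ with $\mathcal{A}_{-\chi}$, so $\lambda_-^{-\chi} = \lambda_-^\chi$ in a neighbourhood of the origin; combined with the analyticity of $\chi \mapsto \lambda_-^\chi$ near $\chi = 0$ furnished by the simplicity and isolation of $\lambda_-^0$ via Kato perturbation theory (\cite{kato_book}, and the discussion preceding \eqref{eqn:first_bloch_evalue}), this forces $\lambda_-^{(\cdot)}$ to be an even analytic function, so its odd-order Taylor coefficients vanish.

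To extract the non-zero coefficients I would fix the normalisation $\pi \varphi_1^\chi = 1$ and expand
\begin{align*}
    \varphi_1^\chi = 1 + \sum_{k\geq 1}\mathbf{M}^{(k)}(y) \odot (i\chi)^{\otimes k},
    \qquad
    \lambda_-^\chi = \sum_{k\geq 2} \Lambda^{(k)} \odot (i\chi)^{\otimes k},
\end{align*}
where $\pi \mathbf{M}^{(k)} = 0$ for $k \geq 1$. Splitting $\mathcal{A}_\chi = \mathcal{A}_{yy} + L(i\chi) + Q(i\chi)$ into its $y$-only, linear-in-$\chi$ and quadratic-in-$\chi$ parts and matching tensor coefficients of $(i\chi)^{\otimes k}$ in \eqref{eqn:evalue_eqn_expand} produces, for each $k$, a cell-equation for $\mathbf{M}^{(k)}$ whose right-hand side depends on $\mathbf{M}^{(k-1)}, \mathbf{M}^{(k-2)}$ and $\Lambda^{(\ell)}$, $\ell < k$; applying $\pi$ and using $\pi \mathbf{M}^{(j)} = \delta_{j0}$ gives the closed identity $\Lambda^{(k)} = \pi \bigl(L \mathbf{M}^{(k-1)} + Q \mathbf{M}^{(k-2)}\bigr)$. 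At $k = 1,2$ this recovers $\mathbf{M}^{(1)} = \mathbf{N}^{(1)}$ and $\Lambda^{(2)} = \mathbb{A}^{\hom}$ matching the homogenised matrix of Theorem \ref{thm:basic_wave_homog}, and determines $\mathbf{M}^{(2)}$ as a linear combination of $\mathbf{N}^{(1)}$ and $\mathbf{N}^{(2)}$.

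At $k = 3$, parity already forces $\Lambda^{(3)} = 0$, so the cell-problem is solvable; comparing against the classical two-scale hierarchy, $\mathbf{M}^{(3)}$ is (up to sign) exactly the auxiliary field $\widehat{\mathbf{M}}^{(3)}$ of the statement, the discrepancy $\widehat{\mathbf{M}}^{(3)} - \mathbf{N}^{(3)}$ being precisely the piece driven by $\mathbb{A}^{\hom} \otimes \mathbf{N}^{(1)}$ that arises because the spectral normalisation $\pi\varphi_1^\chi = 1$ does not coincide with the two-scale normalisation. At $k = 4$, the Fredholm condition reads $\Lambda^{(4)} \odot (i\chi)^{\otimes 4} = \pi\bigl(L \mathbf{M}^{(3)} + Q \mathbf{M}^{(2)}\bigr) \odot (i\chi)^{\otimes 4}$; a single integration by parts converts the right-hand side into $-\pi\bigl(\A \otimes \mathbf{N}^{(2)} + (\A \nabla_y) \otimes \widehat{\mathbf{M}}^{(3)}\bigr) \odot \chi^{\otimes 4}$, which is exactly $\mathbb{D} \odot \chi^{\otimes 4}$ by \eqref{eqn:brunett_tensor}. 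The principal difficulty is the bookkeeping at the third and fourth orders: one must track the full symmetrisation over the tensor indices of $\chi$, verify that the right-hand side defining $\widehat{\mathbf{M}}^{(3)}$ has zero mean in $y$ (using $\pi \mathbf{N}^{(1)} = 0$) so that the field lies in $\dot{H}^1_{\text{per}}(Y)$, and check that the resulting formula for $\mathbb{D}$ is independent of any freedom in choosing $\mathbf{N}^{(3)}$, so that \eqref{eqn:brunett_tensor} is well-posed.
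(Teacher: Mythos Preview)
The paper does not prove this theorem; it is quoted from \cite{conca_orive_vanninathan2002} and stated without argument. What the paper does provide, in the paragraphs immediately preceding the theorem, is exactly the outline you follow: the parity argument via $(\lambda^\chi,\phi^\chi)\leftrightarrow(\lambda^\chi,\overline{\phi^\chi})$ for the vanishing of odd derivatives, and the instruction to substitute the Taylor expansions \eqref{eqn:first_bloch_evalue}--\eqref{eqn:first_bloch_efct} into the eigenvalue equation \eqref{eqn:evalue_eqn_expand} and equate powers of $\chi$, carrying the computation to $\mathcal{O}(|\chi|^4)$. Your proposal is therefore the natural fleshing-out of the paper's sketch and of the cited reference, and the overall strategy is correct.

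One small point to tighten: your claim that $\mathbf{M}^{(2)}$ is ``a linear combination of $\mathbf{N}^{(1)}$ and $\mathbf{N}^{(2)}$'' is imprecise. Under your normalisation $\pi\mathbf{M}^{(k)}=0$ the second-order cell problem forces $\mathbf{M}^{(2)}=\mathbf{N}^{(2)}$ on the nose (compare the $(i\chi)^{\otimes 2}$ coefficient in \eqref{eqn:first_bloch_efct_expansion}, where the extra constant $\int_Y\mathbf{N}^{(1)}\otimes\mathbf{N}^{(1)}$ reflects the different normalisation $\varphi_1^\chi=P_1^\chi 1$ used there). Likewise, your identification of $\mathbf{M}^{(3)}$ with $\widehat{\mathbf{M}}^{(3)}$ ``up to sign'' deserves a line of justification: with $\pi\mathbf{M}^{(3)}=0$, the third-order equation reads $\mathcal{A}_{yy}\mathbf{M}^{(3)}+L\mathbf{M}^{(2)}+Q\mathbf{M}^{(1)}=\Lambda^{(2)}\mathbf{M}^{(1)}$, and since $\mathcal{A}_{yy}\mathbf{N}^{(3)}=-L\mathbf{N}^{(2)}-Q\mathbf{N}^{(1)}+(\text{mean})$ from the classical hierarchy, subtracting gives precisely $\mathcal{A}_{yy}(\mathbf{M}^{(3)}-\mathbf{N}^{(3)})=\mathbb{A}^{\hom}\otimes\mathbf{N}^{(1)}$ up to the relevant symmetrisation, which is the defining equation for $\widehat{\mathbf{M}}^{(3)}-\mathbf{N}^{(3)}$. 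Making this explicit would remove the vagueness, but the argument is sound.
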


Moreover, the same authors showed that the Brunett tensor $\mathbb{D}$ is non-positive on rank-one matrices:

\begin{proposition}\label{prop:brunett_sign}
    \cite[Section~B]{conca_orive_vanninathan2006} For all $\xi \in \R^d$, we have $\mathbb{D}(\xi\otimes \xi) : (\xi\otimes \xi) \leq 0$.
\end{proposition}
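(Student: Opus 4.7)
The plan is to realise $\mathbb{D}(\xi^{\otimes 2}){:}(\xi^{\otimes 2})$ as the fourth-order coefficient of the Taylor expansion of $\lambda_-^{t\xi}$ at $t=0$ and to prove non-positivity by means of a variational upper bound combined with an integration-by-parts identity. Concretely, I would fix $\xi \in \R^d$ and, for small $t$, invoke the min-max principle
\begin{equation*}
    \lambda_-^{t\xi} \leq \mathcal{R}_{t\xi}(\psi_t) := \frac{\int_Y \mathbb{A}(y)(\nabla_y + it\xi)\psi_t \cdot \overline{(\nabla_y + it\xi)\psi_t}\, dy}{\int_Y |\psi_t|^2\,dy}
\end{equation*}
with the trial function $\psi_t(y) = 1 + it\,(\mathbf{N}^{(1)}(y)\cdot\xi) - t^2\,(\mathbf{N}^{(2)}(y){:}(\xi\otimes\xi)) + \alpha(\xi)\,t^2$, where $\mathbf{N}^{(1)}$ and $\mathbf{N}^{(2)}$ are the first two classical correctors appearing in \eqref{eqn:brunett_tensor} and the free constant $\alpha(\xi)$ is available for tuning the Bloch-type normalisation of the trial function.

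A systematic expansion of $\mathcal{R}_{t\xi}(\psi_t)$ in powers of $t$, combined with the cell problems $\mathcal{A}_{yy}(\mathbf{N}^{(1)}\cdot\xi) = \nabla_y\cdot(\mathbb{A}\xi)$ and the one satisfied by $\mathbf{N}^{(2)}{:}(\xi\otimes\xi)$ at $O(t^2)$ of \eqref{eqn:evalue_eqn_expand}, together with the classical identity $\mathbb{A}^{\hom}\xi\cdot\xi = \int_Y \mathbb{A}(y)(\nabla_y\mathbf{N}^{(1)}\cdot\xi + \xi)\cdot(\nabla_y\mathbf{N}^{(1)}\cdot\xi + \xi)\,dy$, shows that the $O(1)$ and $O(t)$ terms of $\mathcal{R}_{t\xi}(\psi_t)$ vanish, that the $O(t^2)$ coefficient equals $\mathbb{A}^{\hom}\xi\cdot\xi$, and hence that $\mathcal{R}_{t\xi}(\psi_t) = \mathbb{A}^{\hom}\xi\cdot\xi\, t^2 + \gamma(\xi)\,t^4 + O(t^6)$ for an explicit $\gamma(\xi)$. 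Comparing with \eqref{eqn:first_bloch_evalue_expansion} yields $\mathbb{D}(\xi^{\otimes 2}){:}(\xi^{\otimes 2}) \leq \gamma(\xi)$. Alternatively, and this is the route taken in \cite{conca_orive_vanninathan2006}, one can start directly from formula \eqref{eqn:brunett_tensor} and use the cell problem for $\widehat{\mathbf{M}}^{(3)}$ together with integration by parts to recast
\begin{equation*}
    \mathbb{D}(\xi^{\otimes 2}){:}(\xi^{\otimes 2}) = -\int_Y \mathbb{A}(y)\nabla_y W_\xi(y) \cdot \nabla_y W_\xi(y)\, dy \leq 0,
\end{equation*}
for a suitable $Y$-periodic scalar $W_\xi$ assembled from $\mathbf{N}^{(1)}$, $\mathbf{N}^{(2)}$, and $\widehat{\mathbf{M}}^{(3)}$.

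The principal obstacle, common to both routes, is the telescoping algebraic identity that produces the clean squared-gradient representation. The Rayleigh-quotient expansion (respectively, the definition \eqref{eqn:brunett_tensor}) generates a mixture of $\mathbf{N}^{(2)}$-gradient energies, $(\mathbf{N}^{(1)}\cdot\xi)^2$-moments weighted against $\mathbb{A}(y)\xi\cdot\xi - \mathbb{A}^{\hom}\xi\cdot\xi$, $\widehat{\mathbf{M}}^{(3)}$-cross terms, and normalisation remainders from the denominator. Collapsing all of these into a single non-negative Dirichlet energy with an overall minus sign requires repeated use of the cell-problem weak formulations, the symmetry of $\mathbb{A}(y)$, and the judicious choice of the free constant $\alpha(\xi)$ that absorbs the stubborn cross-terms. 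In that final step the non-positivity emerges as a consequence of the ground-state character of the spectral germ at $\chi = 0$: the ``energy denominators'' $\lambda_j > 0$ implicitly appearing in the excited-state contributions (through $(\mathcal{A}_{yy})^{-1}$ applied to mean-zero sources in the cell problems) all carry a definite positive sign.
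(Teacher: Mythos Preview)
The paper does not supply its own proof of this proposition; it is quoted verbatim from \cite{conca_orive_vanninathan2006}. There is therefore no in-paper argument to compare against, and your Route~2 (rewriting \eqref{eqn:brunett_tensor} as $-\int_Y \mathbb{A}\nabla W_\xi\cdot\nabla W_\xi\,dy$ via the cell problems) is precisely what the cited reference does.

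Your Route~1, however, has a genuine gap. The trial function $\psi_t = 1 + it(\mathbf{N}^{(1)}\!\cdot\xi) - t^2(\mathbf{N}^{(2)}{:}\xi^{\otimes 2}) + \alpha t^2$ coincides with the true ground state $\varphi_1^{t\xi}$ to order $t^2$ up to a $y$-independent factor (compare \eqref{eqn:first_bloch_efct_expansion}: the $O(t^2)$ part of $\varphi_1^{t\xi}$ is $-\mathbf{N}^{(2)}{:}(\xi\otimes\xi)$ plus a constant). Since the Rayleigh quotient is scale-invariant and stationary at eigenvectors, this forces $\mathcal{R}_{t\xi}(\psi_t)=\lambda_1^{t\xi}+O(t^6)$, i.e.\ $\gamma(\xi)=\mathbb{D}(\xi^{\otimes 2}){:}(\xi^{\otimes 2})$ \emph{exactly}. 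The variational inequality degenerates to an equality at order $t^4$ and delivers no sign information. A direct expansion confirms this: the $\alpha$-contributions to the numerator ($+2\alpha\,\mathbb{A}^{\hom}\xi\cdot\xi$ at $t^4$) and to the denominator ($+2\alpha$ at $t^2$, which after division produces $-2\alpha\,\mathbb{A}^{\hom}\xi\cdot\xi$ at $t^4$) cancel identically, so the free constant cannot ``absorb the stubborn cross-terms'' as you hope.

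To make a min--max argument bite you would need a trial function that departs from $\varphi_1^{t\xi}$ already at order $t^2$ in its \emph{oscillatory} part, yet whose $t^4$ Rayleigh coefficient is visibly $\le 0$; no natural candidate presents itself. In practice one is thrown back onto Route~2, which you identify correctly but do not execute. The algebraic telescoping you flag as the ``principal obstacle'' is not a complication to be dispatched after the main idea---it \emph{is} the proof.
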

This should be contrasted with the fact that $\mathbb{A}^{\hom}$ is positive (due to the ellipticity assumption on $\mathbb{A}(y)$). As for the ground state $\varphi_1^\chi$, the same authors showed that
\begin{theorem}\label{thm:connection_efct}
    \cite[Proposition~1.10]{conca_orive_vanninathan2002}\footnote{The authors of \cite{conca_orive_vanninathan2002} actually computed the $\mathcal{O}(|\chi|^4)$ term, but we have omitted the formulae to streamline the discussion.} The Taylor expansion of $\chi \mapsto \varphi_1^\chi$ near $\chi=0$ is
    \begin{equation}\label{eqn:first_bloch_efct_expansion}
    \begin{split}
        \varphi_1^\chi(y) &= 
        1 + \mathbf{N}^{(1)}(y) \cdot (i \chi) 
        + \left[ 2 \mathbf{N}^{(2)}(y) 
        + \int_Y (\mathbf{N}^{(1)} \otimes \mathbf{N}^{(1)}) \, d\tilde{y} \right] : \frac{1}{2!} (i \chi) \otimes (i \chi) \\
        &+ \bigg[ \widehat{\mathbf{M}}^{(3)}(y) - \frac{1}{3} \bigg( 
        \mathbf{N}^{(1)}(y) \otimes \int_Y (\mathbf{N}^{(1)} \otimes \mathbf{N}^{(1)}) \, d\tilde{y} \\
        &\qquad+ \int_Y \mathbf{N}^{(1)}(\tilde{y}) \otimes \mathbf{N}^{(1)}(y) \otimes \mathbf{N}^{(1)} (\tilde{y}) \, d\tilde{y}
        + \int_Y (\mathbf{N}^{(1)} \otimes \mathbf{N}^{(1)}) \, d\tilde{y} \otimes \mathbf{N}^{(1)}(y)
        \bigg) 
        \bigg] \odot \frac{1}{3!} (i\chi)^{\otimes 3} + \mathcal{O}(|\chi|^4).\\
    \end{split}
    \end{equation}
\end{theorem}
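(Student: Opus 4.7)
My plan is to derive the expansion \eqref{eqn:first_bloch_efct_expansion} by substituting a formal Taylor series for both $\varphi_1^\chi$ and $\lambda_-^\chi$ into the eigenvalue equation \eqref{eqn:evalue_eqn_expand} and solving order by order in $(i\chi)^{\otimes k}$; at each order a cell problem on $Y$ appears, whose solvability condition fixes the corresponding Taylor coefficient of $\lambda_-^\chi$ (consistent with Theorem \ref{thm:connection_evalue}) and whose solution, taken in $\dot{H}^1_{\text{per}}(Y)$ modulo constants, yields the Taylor coefficient of $\varphi_1^\chi$ after the normalization is imposed. Analyticity of the Riesz projection $P_1^\chi = \frac{1}{2\pi i}\oint (\zeta - \mathcal{A}_\chi)^{-1}\,d\zeta$ near $\chi=0$ (guaranteed by the simplicity and isolation of $\lambda_-^0 = 0$) yields convergent expansions
\begin{equation*}
\varphi_1^\chi(y) = \sum_{k \geq 0} \frac{1}{k!}\, \mathbf{V}^{(k)}(y) \odot (i\chi)^{\otimes k}, \qquad
\lambda_-^\chi = \sum_{k \geq 0} \frac{1}{k!}\, \boldsymbol{\lambda}^{(k)} \odot (i\chi)^{\otimes k},
\end{equation*}
and applying $\int_Y (\cdot)\,dy$ to $\varphi_1^\chi = P_1^\chi 1$, together with self-adjointness of $P_1^\chi$ and $P_1^0 1 = 1$, pins the additive constants via $\int_Y \mathbf{V}^{(0)}\,dy = 1$ and $\int_Y \mathbf{V}^{(k)}\,dy = 0$ for $k \geq 1$.

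Equating like powers of $(i\chi)$ in \eqref{eqn:evalue_eqn_expand} yields, for each $k \geq 0$, a cell equation
\begin{equation*}
\mathcal{A}_{yy} \mathbf{V}^{(k)}(y) = \mathbf{F}^{(k)}\bigl(y;\, \mathbf{V}^{(0)},\dots,\mathbf{V}^{(k-1)};\, \boldsymbol{\lambda}^{(0)},\dots,\boldsymbol{\lambda}^{(k)}\bigr),
\end{equation*}
and the Fredholm condition $\int_Y \mathbf{F}^{(k)}\,dy = 0$ determines $\boldsymbol{\lambda}^{(k)}$ uniquely. Orders $k=0,1$ are immediate: $\mathbf{V}^{(0)} = 1$, and the order-one equation is exactly $\mathcal{A}_{yy}\mathbf{V}^{(1)} = \nabla_y \cdot \A$, which coincides with the cell problem for $\mathbf{N}^{(1)}$ and has mean zero, so $\mathbf{V}^{(1)} = \mathbf{N}^{(1)}$. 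At order 2 the Fredholm condition forces $\boldsymbol{\lambda}^{(2)} = \mathbb{A}^{\hom}$, after which $\mathbf{F}^{(2)}$ reproduces the defining RHS of $2\mathbf{N}^{(2)}$; the zero-mean constraint then adds the constant $\int_Y \mathbf{N}^{(1)} \otimes \mathbf{N}^{(1)}\,d\tilde{y}$, recovering the second-order term of \eqref{eqn:first_bloch_efct_expansion}.

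The main obstacle is order 3. Here $\mathbf{F}^{(3)}$ assembles into three contributions: (i) $\mathbb{A}^{\hom} \otimes \mathbf{N}^{(1)}$ coming from $\boldsymbol{\lambda}^{(2)} \otimes \mathbf{V}^{(1)}$, (ii) a term that is exactly $\mathcal{A}_{yy}\mathbf{N}^{(3)}$ arising from the usual third-order two-scale hierarchy, and (iii) cross terms bilinear in $\mathbf{N}^{(1)}$ and $\mathbf{N}^{(2)}$ carrying three free tensor indices, which must be properly symmetrized. Pieces (i)+(ii) match precisely the RHS of the cell problem defining $\widehat{\mathbf{M}}^{(3)}$, so the order-3 equation reduces to $\mathcal{A}_{yy}\bigl(\mathbf{V}^{(3)} - \widehat{\mathbf{M}}^{(3)}\bigr) = \text{(cross terms)}$. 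Unpacking the cross terms and enforcing $\int_Y \mathbf{V}^{(3)}\,dy = 0$ then produces the symmetric three-term bracket of \eqref{eqn:first_bloch_efct_expansion}, with the combinatorial factor $-1/3$ arising from averaging over the three cyclic placements of the free index on $\mathbf{N}^{(1)}(y)$ versus the two copies $\mathbf{N}^{(1)}(\tilde{y})$ that are integrated out. The difficulty is therefore essentially combinatorial: carefully tracking the symmetrization of a rank-3 tensor identity and identifying, by means of the normalization, the specific element of the affine solution set of the order-3 cell problem that equals $\mathbf{V}^{(3)}$.
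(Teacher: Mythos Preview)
The paper does not prove this theorem at all; it is quoted from \cite{conca_orive_vanninathan2002} without proof, so there is no ``paper's own proof'' to compare against. Your overall strategy --- expand $\varphi_1^\chi$ and $\lambda_-^\chi$ in Taylor series, substitute into \eqref{eqn:evalue_eqn_expand}, and solve the resulting cell problems order by order with the Fredholm alternative fixing the $\boldsymbol{\lambda}^{(k)}$ --- is the standard one and is presumably what the cited reference does.

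That said, there is a genuine inconsistency in your normalization step. You claim that self-adjointness of $P_1^\chi$ together with $P_1^0 1 = 1$ forces $\int_Y \mathbf{V}^{(k)}\,dy = 0$ for all $k\ge 1$, and then a few lines later say that ``the zero-mean constraint then adds the constant $\int_Y \mathbf{N}^{(1)}\otimes\mathbf{N}^{(1)}\,d\tilde y$'' to $2\mathbf{N}^{(2)}$. These two statements are contradictory: if $\mathbf{V}^{(2)}$ had zero mean there would be no constant to add, and indeed the very formula \eqref{eqn:first_bloch_efct_expansion} you are trying to prove has $\int_Y \mathbf{V}^{(2)}\,dy = \int_Y \mathbf{N}^{(1)}\otimes\mathbf{N}^{(1)}\,d\tilde y \neq 0$ in general. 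What self-adjointness and idempotence of $P_1^\chi$ actually give is
\[
\int_Y \varphi_1^\chi\,dy \;=\; \langle P_1^\chi 1, 1\rangle \;=\; \langle P_1^\chi 1, P_1^\chi 1\rangle \;=\; \|\varphi_1^\chi\|_{L^2(Y)}^2,
\]
which is a nontrivial $\chi$-dependent quantity, not identically $1$. This is the correct constraint that singles out the particular constant at each order, and it is what produces the $\int_Y \mathbf{N}^{(1)}\otimes\mathbf{N}^{(1)}$ term at order two and the specific symmetrized combination at order three. Once you replace the erroneous ``$\int_Y \mathbf{V}^{(k)}=0$'' by this relation, the rest of your outline goes through.
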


It was remarked in \cite{conca_orive_vanninathan2002}, that one could in-principle carry out similar computations to connect Bloch data and two-scale data to all orders, but this line of investigation remains open.


As we have seen above, the leading-order Taylor coefficients of $(\lambda_-^\chi, \varphi_1^\chi)$ coincide with the two-scale homogenised data $\mathbb{A}^{\hom}$ and $\mathbf{N}^{(0)}=1$, but this is not true for higher-orders. For instance, the Brunett tensor $\mathbb{D}$ differs from the fourth-order (stationary) two-scale homogenised coefficient
\begin{align}\label{eqn:b_hom}
    \mathbb{B}^{\hom} := \int_Y \left( \A(y) \otimes \mathbf{N}^{(2)}(y) + (\A(y)\nabla_y) \otimes \mathbf{N}^{(3)}(y) \right) \, dy.
\end{align}
That is, $\pi u_2$ solves $\partial_{tt} \pi u_2 -\Div ( \A^{\hom} \nabla \pi u_2 ) = (\mathbb{B}^{\hom}:\nabla^4 + \text{mixed derivatives}\,)\, \pi u_0$ (see \eqref{eqn:classical_alg_homo}). Nonetheless, the formulae \eqref{eqn:b_hom} and \eqref{eqn:brunett_tensor} are close enough, that one could ask if there are conditions such that $\mathbb{D}$ and $\mathbb{B}^{\hom}$ can be made to coincide. This is the content of \cite{allaire_briane_vanninathan_comparison}: Allaire, Briane, and Vanninathan showed that if the forcing term $f(t,x)$ is suitably modified, then the $\mathcal{O}(\eps^2)$-homogenised WE from \eqref{eqn:classical_alg_homo} coincides with the equation formally obtained by applying the Fourier transform to the Bloch data:\footnote{Again, the equation \eqref{eqn:abv_homogenised_we} is ill-posed, and a suitable modification is necessary. We discuss this below.}
\begin{align}\label{eqn:abv_homogenised_we}
    \partial_{tt} v_\eps - \Div (\A^{\hom} \nabla v_\eps ) + \eps^2 \mathbb{D} \odot \nabla^4 v_\eps = f.
\end{align}
We refer the reader to \cite[Proposition~6.1]{allaire_briane_vanninathan_comparison} for the precise statement.

\subsubsection{Spectral ansatz} We shall summarize the derivation of the ansatz proposed by Benoit-Gloria-Duerinckx-Ruf \cite{benoit_gloria_2019_ballistic_transport, duerinckx_gloria_ruf_2024_spectral_ansatz}. \textbf{Step 1.} Following \eqref{eqn:evalue_eqn_expand}, let consider for each frequency/wave-vector $\xi \in \R^d$, the operator 
\begin{align}
    \mathcal{A}_{\xi} 
    &= -(\nabla_y + i\xi) \cdot \A(y) (\nabla_y + i\xi) \nonumber \\
    &= \underbrace{- \nabla_y \cdot \A(y) \nabla_y}_{=:~ \mathcal{L}_\xi^{(0)}} 
    ~ \underbrace{- \nabla_y \cdot (\A(y) (i\xi) ) - (i\xi) \cdot \A(y) \nabla_y}_{=: ~\mathcal{L}_\xi^{(1)}} 
    ~ \underbrace{- (i\xi) \cdot \A(y) (i\xi) }_{=: ~\mathcal{L}_\xi^{(2)}}
\end{align}
on $L^2(Y)$, equipped with periodic boundary conditions. The superscript in $\mathcal{L}^{(j)}$ loosely indicates that there are $j$-factors of $(i\xi)$'s. Also, the reader should compare the operators $\mathcal{L}^{(0)}$, $\mathcal{L}^{(1)}$, and $\mathcal{L}^{(2)}$ with $\mathcal{A}_{yy}$, $\mathcal{A}_{xy}$, and $\mathcal{A}_{xx}$ of the two-scale expansion respectively, formally replacing $(i\xi)$ with $\nabla_x$.

\textbf{Step 2.} We shall now set $\xi = \eps \chi$, where $\chi \in Y' = [-\pi,\pi]^d$. Then,
\begin{align}\label{eqn:spec_ansatz_operator_expand}
    \mathcal{A}_{\eps\chi} 
    &= - \nabla_y \cdot \A(y) \nabla_y - \eps \nabla_y \cdot (\A(y) (i\chi) ) - \eps (i\chi) \cdot \A(y) \nabla_y - \eps^2 (i\chi) \cdot \A(y) (i\chi) \nonumber\\
    &= \mathcal{L}_\chi^{(0)} + \eps \mathcal{L}_\chi^{(1)} + \eps^2 \mathcal{L}_\chi^{(2)}.
\end{align}
We are interested in the eigenvalue equation with $\xi = \eps \chi$ for the first eigenpair:
\begin{align}\label{eqn:spec_anzatz_evalue_eqn}
    \mathcal{A}_{\eps \chi} \varphi_1^{\eps\chi} = \lambda_-^{\eps\chi} \varphi_1^{\eps\chi}.
\end{align}

\begin{remark}
    [Comparing to the previous section] We have sought an expansion of $(\lambda_-^{\chi},\varphi_1^{\chi})$ near $\chi = 0$ in \eqref{eqn:first_bloch_evalue_expansion}-\eqref{eqn:first_bloch_efct_expansion}. In the present setup, we shall expand near $\eps\chi = \xi = 0$. However, we do not seek an expansion in the variable $\xi$, but instead will introduce a two-stage expansion: first in $\eps$, then in $\chi$.
\end{remark}

Recall that when $\xi=\eps\chi$ is small, $\lambda_-^{\xi}$ is simple and standard perturbation theory applies \cite{kato_book}. Thus, let us seek an expansion for $\lambda_-^{\eps \chi}$ and $\varphi_1^{\eps \chi}$, in powers of $\eps$:
\begin{align}\label{eqn:spec_ansatz_eigenpair_expand}
    \lambda_-^{\eps \chi} \sim \sum_{n=0}^\infty \eps^n \check{\lambda}_\chi^{(n)},
    \quad \text{where $\check{\lambda}_\chi^{(n)}\in \C$,}
    \qquad \text{and} \qquad
    \varphi_1^{\eps \chi} \sim \sum_{n=0}^\infty \eps^n \check{\varphi}_\chi^{(n)},
    \quad \text{where $\check{\varphi}_\chi^{(n)}$ is $\Z^d-$periodic.}
\end{align}

\textbf{Step 3.} Substitute \eqref{eqn:spec_ansatz_operator_expand} and \eqref{eqn:spec_ansatz_eigenpair_expand} into the eigenvalue equation \eqref{eqn:spec_anzatz_evalue_eqn}. Collect like powers of $\eps$:
\begin{equation}\label{eqn:spec_ansatz_hierachy}
    \begin{cases}
        \begin{alignedat}{6}
            &\mathcal{O}(\eps^0) &&\mathcal{L}_\chi^{(0)} \check{\varphi}_\chi^{(0)} 
            &&= \check{\lambda}_\chi^{(0)} \check{\varphi}_\chi^{(0)}, \\
            &\mathcal{O}(\eps^1) &&\mathcal{L}_\chi^{(0)} \check{\varphi}_\chi^{(1)} + \mathcal{L}_\chi^{(1)} \check{\varphi}_\chi^{(0)} 
            &&= \check{\lambda}_\chi^{(0)} \check{\varphi}_\chi^{(1)} + \check{\lambda}_\chi^{(1)} \check{\varphi}_\chi^{(0)}, \\
            &\mathcal{O}(\eps^2) &&\mathcal{L}_\chi^{(0)} \check{\varphi}_\chi^{(2)} + \mathcal{L}_\chi^{(1)} \check{\varphi}_\chi^{(1)} + \mathcal{L}_\chi^{(2)} \check{\varphi}_\chi^{(0)} 
            &&= \check{\lambda}_\chi^{(0)} \check{\varphi}_\chi^{(2)} + \check{\lambda}_\chi^{(1)} \check{\varphi}_\chi^{(1)} + \check{\lambda}_\chi^{(2)} \check{\varphi}_\chi^{(0)}, \\
            &~~\vdots \qquad\qquad && &&\\
            &\mathcal{O}(\eps^k) &&\mathcal{L}_\chi^{(0)} \check{\varphi}_\chi^{(k)} 
            &&= - \mathcal{L}_\chi^{(1)} \check{\varphi}_\chi^{(k-1)}
            - \mathcal{L}_\chi^{(2)} \check{\varphi}_\chi^{(k-2)} 
            + \sum_{n=0}^k \check{\lambda}_\chi^{(n)} \check{\varphi}_\chi^{(k-n)}.
        \end{alignedat}
    \end{cases}
\end{equation}
Since we know that $\lambda_-^{0}=0$ and $\varphi_1^{0} = 1$, it is thus natural to set $\check{\lambda}_\chi^{(0)} := 0$ and $\check{\varphi}_\chi^{(0)} := 1$.

\textbf{Step 4.} We shall write down the equations for $\check{\lambda}_\chi^{(k)}$ and $\check{\varphi}_\chi^{(k)}$, $k\geq 1$. For $\check{\varphi}_\chi^{(k)}$, \eqref{eqn:spec_ansatz_hierachy} reads:
\begin{align}\label{eqn:spec_ansatz_hierachy_k}
    - \nabla_y \cdot \left( \A(y) \nabla_y \check{\varphi}_\chi^{(k)} \right)
    = \nabla_y \cdot \left( \A(y) (i\chi) \check{\varphi}_\chi^{(k-1)} \right)
    + (i\chi) \cdot \A(y) \left( \nabla_y \check{\varphi}_\chi^{(k-1)} + (i\chi) \check{\varphi}_\chi^{(k-2)} \right)
    + \sum_{n=0}^k \check{\lambda}_\chi^{(n)} \check{\varphi}_\chi^{(k-n)},
\end{align}
where we shall impose the mean-zero condition $\int_Y \check{\varphi}_\chi^{(k)} \, dy = 0$, as we typically do for all cell-problems.

For $\check{\lambda}_\chi^{(k)}$, observe that by Divergence theorem and periodicity of $\check{\varphi}_\chi^{(k)}$,
\begin{align}
    \int_Y \nabla_y \cdot \left( \A(y) \nabla_y \check{\varphi}_\chi^{(k)} \right) \, dy = 0,
    \qquad \text{and} \qquad
    \int_Y \nabla_y \cdot \left( \A(y) (i\chi) \check{\varphi}_\chi^{(k-1)} \right) \, dy = 0.
\end{align}
Also, due to the periodicity of $\check{\varphi}_\chi^{(n)}$'s, the condition $\int_Y \check{\varphi}_\chi^{(n)} \, dy = 0$ ($n\geq 1$), and $\check{\varphi}_\chi^{(0)}=1$, we get 
\begin{align}
    \int_Y \sum_{n=0}^k \check{\lambda}_\chi^{(n)} \check{\varphi}_\chi^{(k-n)} \, dy
    = \sum_{n=0}^k \check{\lambda}_\chi^{(n)} \int_Y \check{\varphi}_\chi^{(n)} \, dy
    = \check{\lambda}_\chi^{(k)}.
\end{align}
Thus, by taking $\int_Y dy$ in \eqref{eqn:spec_ansatz_hierachy_k}, one arrives at the equation for $\check{\lambda}_\chi^{(k)}$:
\begin{align}
    \check{\lambda}_\chi^{(k)} = - \int_Y (i\chi) \cdot \A(y) \left( \nabla_y \check{\varphi}_\chi^{(k-1)} + (i\chi) \check{\varphi}_\chi^{(k-2)} \right) \,dy
\end{align}

\textbf{Step 5.} The above steps are sufficient, if one is content with a maximum accuracy of $\mathcal{O}(\eps)$. To obtain an ansatz that is valid for arbitrarily long times and to arbitrary accuracy \textit{simultaneously}, one needs to perform an expansion for the ``bulk" (the remaining eigenspaces). However, the observation made in \cite{duerinckx_gloria_ruf_2024_spectral_ansatz} is that one do not need an expansion for the individual eigenprojections $P_2^{\chi}$, $P_3^{\chi}$, $\cdots$. Rather, an expansion for the \textit{sum} $P_2^{\chi} + P_3^\chi + \cdots = (P_1^\chi)^\perp$ suffices.\footnote{This observation was also used in a different manner by Cherednichenko-Velčić-Žubrinić-Lim to develop an ``operator-asymptotic" approach to homogenisation \cite{simplified_method, kirill_igor_josip_rods, kirill_igor_plates}.} To this end, consider the function
\begin{align}
    \Psi_{\chi,\eps}^m := (\mathcal{A}_{\eps\chi})^{-m-1} \tfrac{1}{\eps} (P_1^{\eps\chi})^\perp 1, \qquad \text{for $m\geq 0$.}
\end{align}
We remark the the powers $(\mathcal{L}_{\eps\chi})^{-m-1}$ arises naturally from the expansion of the operator $\sin(\mathcal{L}_{\eps\chi}^{1/2}(t-s))$ in the Duhamel formula \eqref{hyperbolic_formula}. For each fixed $m\geq 0$, seek an expansion for $\Psi_{\chi,\eps}^m$ in powers of $\eps$:
\begin{align}\label{eqn:spec_ansatz_bulk_expand}
    \Psi_{\chi,\eps}^m \sim \frac{1}{\| \varphi_1^{\eps\chi} \|} \sum_{n=0}^\infty \eps^n \check{\zeta}_\chi^{(n,m)},
    \quad \text{where $\check{\varphi}_\chi^{(n)}$ is $\Z^d-$periodic.}
\end{align}

\textbf{Step 6.} Substituting the expansion \eqref{eqn:spec_ansatz_bulk_expand} into the equation
\begin{align}
    \mathcal{A}_{\eps\chi} \Psi_{\chi,\eps}^0 = \tfrac{1}{\eps} (P_1^{\eps\chi})^\perp 1,
\end{align}
one obtains a hierarchy of equations for $\check{\zeta}_\chi^{(n,0)}$ for each $n\geq 0$ (we omit this for brevity, see \cite[Sect~1.4]{duerinckx_gloria_ruf_2024_spectral_ansatz} for details). Then, using the relation
\begin{align}
    \mathcal{A}_{\eps\chi} \Psi_{\chi,\eps}^m = \Psi_{\chi,\eps}^{m-1}, \qquad m\geq 1,
\end{align}
one obtains a hierarchy of equations for $\check{\zeta}_\chi^{(n,m)}$ for each $n\geq 0$ and $m \geq 1$:
\begin{align}
    -\nabla_y \cdot \A(y) \nabla_y \check{\zeta}_\chi^{(n,m)} = 
    \nabla_y \cdot \left( \A(y) (i\chi) \check{\zeta}_\chi^{(n-1,m)} \right)
    + (i\chi) \cdot \A(y) \left( \nabla_y \check{\zeta}_\chi^{(n-1,m)} + (i\chi) \check{\zeta}_\chi^{(n-2,m)} \right)
    + \check{\zeta}_\chi^{(n,m-1)},
\end{align}
where we pick a convenient choice of $\int_Y \check{\zeta}_\chi^{(n,m)} \, dy$ so that they are uniquely defined.

\textbf{Step 7.} Extract the ``Bloch data" $\A^{\hom,n}$, $\varphi^{(n)}$, and $\zeta^{(n,m)}$ by expanding in powers of $(i\chi)^{\otimes n}$, $n\geq 0$:
\begin{align}
    \check{\lambda}_\chi^{(n+1)} = \chi \cdot (\A^{\hom,n} \odot (i\chi)^{\otimes (n-1)}    )\chi,
    \qquad
    \check{\varphi}_\chi^{(n)} = \varphi^{(n)} \odot (i\chi)^{\otimes n},
    \qquad
    \check{\zeta}_\chi^{(n,m)} = \zeta^{(n,m)} \odot (i\chi)^{\otimes (n+1)}.
\end{align}
For instance, when $k=1$, we have $\A^{\hom,1} = \A^{\hom}$, and so $\check{\lambda}_\chi^{(2)} = \chi \cdot \A^{\hom} \chi$ (cf.~\eqref{eqn:bloch_connection_chi2}), and the $\mathcal{O}(\eps)$ equation of \eqref{eqn:spec_ansatz_hierachy} reads
\begin{align}
    \mathcal{L}_\chi^{(0)} \varphi^{(1)} \cdot (i\chi) = \nabla_y \cdot\bigl(\A(y) (i\chi)\bigr)
    \qquad \text{or, equivalently,} \qquad
    -\nabla_y \cdot\bigl(\A(y) \nabla_y \varphi^{(1)}\bigr) = \nabla_y \cdot \A(y).
\end{align}
This is the cell-problem for the classical first-order corrector. That is, $\varphi^{(1)} = \mathbf{N}^{(1)}$ (cf.~\eqref{eqn:bloch_connection_chi2}).

\textbf{Step 8.} Finally, we obtain the homogenised equation by taking the Bloch data and applying (inverse) Fourier transform back into physical space. This gives us the formal equation for the \textit{spectral ansatz} $w_\eps$. (Compare this with \eqref{eqn:criminal_formal_homog_eqn_v1} for the criminal ansatz.)
\begin{align}\label{eqn:spectral_formal_homog_eqn_v1}
    \partial_{tt} w_\eps - \nabla \cdot \left( \A^{\hom,1} + \sum_{n=2}^\infty \A^{\hom,n} \odot (\eps \nabla)^{n-1} \right)\nabla w_\eps = f,
    \qquad
    w_\eps = w_\eps(t,x).
\end{align}

\begin{theorem}\label{thm:spectralansatz_longtime}
    \cite[Theorem~1]{duerinckx_gloria_ruf_2024_spectral_ansatz} Let the spectral correctors $\{\varphi^{(n)}\}_{n\geq 0}$ and $\{\zeta^{(n,m)} \}_{n,m\geq 0}$, and homogenised tensors $\{ \A^{\hom,n} \}_{n\geq 1}$ be defined as above. Let $f(t,x)$ satisfy Assumption \ref{assumption}. For each $k\geq 1$, define the spectral ansatz $w_\eps^{k}$ at level $k$ as the unique solution to a ``suitably regularized" version of the equation
    \begin{align}\label{eqn:spectral_formal_homog_eqn_v2}
        \partial_{tt} w_\eps^k - \nabla \cdot \left( \A^{\hom,1} + \sum_{n=2}^k \A^{\hom,n} \odot (\eps \nabla)^{n-1} \right)\nabla w_\eps = f.
    \end{align}
    Then, define the \textit{spectral two-scale expansion} at level $k$ by the expression
    \begin{align*}
        S_\eps^k[w_\eps^k,f] := \sum_{n=0}^k \eps^n \varphi^{(n)}\biggl(\frac{\cdot}{\eps}\biggr) \odot \psi_k(\eps\nabla) \nabla^n {\color{blue}w_\eps^k}
        + \underbrace{\eps^3 \sum_{2m=0}^{k-3} (-1)^m \eps^{2m} \sum_{n=0}^{k-3-2m} \eps^n \zeta^{(n,m)}\biggl(\frac{\cdot}{\eps}\biggr) \odot \psi_k(\eps\nabla) (\nabla^{n+1} \partial_t^{2m}) {\color{blue}f}}_{\text{This part contains information from the spectral ``bulk" (Step 5).}},
    \end{align*}
    where $\psi_k(\xi) :=\bigl\| \sum_{n=0}^k \varphi^{(n)} \odot (i\xi)^{\otimes n}\bigr\|^{-2}$ are Fourier multipliers satisfying $|\psi_k(\xi)|\leq 1$.
    Then there is a constant $C = C(k)$ such that
    \begin{align}\label{eqn:spectralansatz_longtime}
        \bigl\| u_\eps(t) - S_\eps^k[w_\eps^k,f](t) \bigr\|_{L^2(\R^d)} + 
        \left\| \nabla_{t,x} \left[ u_\eps(t) - S_\eps^k[w_\eps^k,f](t) \right] \right\|_{L^2(\R^d)}
        \leq (\eps C)^k \langle t \rangle \| \langle \nabla_{t,x} \rangle^{Ck} f \|_{L^1([0,t];L^2(\R^d))}.
    \end{align}
\end{theorem}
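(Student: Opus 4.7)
My plan is to pass to the Floquet-Bloch representation, under which $\mathcal{A}_\eps$ decomposes as the direct integral $\int^{\oplus}\eps^{-2}\mathcal{A}_{\eps\chi}\,d\chi$ over $\chi\in Y'=[-\pi,\pi]^d$, and the Duhamel formula \eqref{eqn:u_eps_abstract} is applied fibrewise. The Fourier multiplier $\psi_k(\eps\nabla)$ localises to a neighbourhood $U\subset Y'$ of $\chi=0$ in which the first eigenvalue of $\mathcal{A}_{\eps\chi}$ is simple, isolated, and analytic in $\chi$; on $U$ I split
\begin{equation*}
\widetilde{u}_\eps(t,\chi)=P_1^{\eps\chi}\widetilde{u}_\eps(t,\chi)+(P_1^{\eps\chi})^{\perp}\widetilde{u}_\eps(t,\chi),
\end{equation*}
where $\widetilde{u}_\eps$ is the Floquet transform, and analyse the two pieces separately. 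Off $\operatorname{supp}\psi_k$, the difference $u_\eps-S_\eps^k[w_\eps^k,f]$ is absorbed into the $(\eps C)^k$-term using $|1-\psi_k(\eps\chi)|\lesssim(\eps|\chi|)^{k}$ and the boundedness of the hyperbolic propagators on $L^2$.

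\textbf{First-band contribution.} On $\operatorname{range}(P_1^{\eps\chi})$, the fibre dynamics reduce to scalar Fourier multipliers $\cos(\sqrt{\lambda_-^{\eps\chi}}\,t)$ and $(\lambda_-^{\eps\chi})^{-1/2}\sin(\sqrt{\lambda_-^{\eps\chi}}\,t)$. Step~7 of the construction identifies the Taylor coefficients in $\eps\chi$ of $\lambda_-^{\eps\chi}$ and of $\varphi_1^{\eps\chi}/\|\varphi_1^{\eps\chi}\|$ as, respectively, the homogenised tensors $\mathbb{A}^{\hom,n}$ and the correctors $\varphi^{(n)}\odot(i\chi)^{\otimes n}$. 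In particular, the symbol of the regularised equation \eqref{eqn:spectral_formal_homog_eqn_v2} is precisely the Taylor polynomial of $\lambda_-^{\eps\chi}$ truncated at order $k+1$. Consequently, the ``$\varphi^{(n)}$-part'' of $S_\eps^k[w_\eps^k,f]$ reproduces $P_1^{\eps\chi}\widetilde{u}_\eps$ up to Taylor remainders of order $(\eps|\chi|)^{k+1}$, which on $\operatorname{supp}\psi_k$ are of order $\eps^k$; Cauchy-type bounds from analytic perturbation theory \cite{kato_book} supply the quantitative constants at the expense of $Ck$ space-time derivatives of the data.

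\textbf{Bulk contribution.} On $(P_1^{\eps\chi})^{\perp}$ the operator $\mathcal{A}_{\eps\chi}$ has a uniform spectral gap, so the iterates $\Psi_{\chi,\eps}^m=\mathcal{A}_{\eps\chi}^{-m-1}\tfrac{1}{\eps}(P_1^{\eps\chi})^{\perp}1$ are bounded uniformly in $\chi\in U$ and in $\eps$. I apply Duhamel's formula \eqref{hyperbolic_formula} on this subspace together with the identity
\begin{equation*}
\frac{\sin(\sqrt{A}\,\tau)}{\sqrt{A}}=\sum_{m\geq 0}\frac{(-1)^m\,\tau^{2m+1}}{(2m+1)!}\,A^{m},
\end{equation*}
and integrate by parts $2m$ times in $s$ to transfer powers of $\mathcal{A}_{\eps\chi}$ onto $f$ as time derivatives $\partial_t^{2m}f$. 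Truncating the expansion \eqref{eqn:spec_ansatz_bulk_expand} of $\Psi_{\chi,\eps}^m$ in powers of $\eps$ at level $k$ reproduces exactly the second sum in the definition of $S_\eps^k[w_\eps^k,f]$, with an $\mathcal{O}(\eps^k)$ remainder.

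\textbf{Stability and main obstacle.} Assembling the pieces, the residual $r_\eps^k:=u_\eps-S_\eps^k[w_\eps^k,f]$ satisfies a Cauchy problem $(\partial_{tt}+\mathcal{A}_\eps)r_\eps^k=\mathcal{R}_\eps^k$ with vanishing data at $t=0$ (by Assumption~\ref{assumption} and the matching construction). The standard hyperbolic energy estimate combined with $\|\mathcal{A}_\eps^{-1/2}\sin(\sqrt{\mathcal{A}_\eps}\tau)\|_{op}\leq|\tau|$ yields
\begin{equation*}
\bigl\|r_\eps^k(t)\bigr\|_{L^2(\R^d)}+\bigl\|\nabla_{t,x}r_\eps^k(t)\bigr\|_{L^2(\R^d)}\lesssim\langle t\rangle\int_0^t\bigl\|\mathcal{R}_\eps^k(s)\bigr\|_{L^2(\R^d)}\,ds,
\end{equation*}
producing the factor $\langle t\rangle$ in \eqref{eqn:spectralansatz_longtime}. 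The hard part will be the uniform-in-$\chi$ quantitative analyticity: the Taylor remainders of $\lambda_-^{\eps\chi}$, $\varphi_1^{\eps\chi}$, and $\Psi_{\chi,\eps}^m$ on $\operatorname{supp}\psi_k$ must be bounded with constants of controlled factorial growth in $k$, and this growth dictates the exponent $Ck$ of derivatives absorbed into $\|\langle\nabla_{t,x}\rangle^{Ck}f\|$. Closely tied to this is the precise specification of the ``suitable regularisation'' of \eqref{eqn:spectral_formal_homog_eqn_v2}: the raw truncated symbol is generically ill-posed, and the regularisation must be chosen so that $w_\eps^k$ is globally well-posed in time while coinciding, after application of $\psi_k(\eps\nabla)$, with the true first-band dynamics of $u_\eps$ up to order $\eps^k$.
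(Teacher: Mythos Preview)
The paper does not give its own proof of this theorem; it is quoted from \cite{duerinckx_gloria_ruf_2024_spectral_ansatz}, and the only information the paper supplies about the argument is the remark immediately following the statement: the proof in \cite{duerinckx_gloria_ruf_2024_spectral_ansatz} is carried out \emph{purely in physical space}, the Bloch picture being used only once to extract the corrector data $\varphi^{(n)}$, $\zeta^{(n,m)}$, $\A^{\hom,n}$. One then substitutes $S_\eps^k[w_\eps^k,f]$ directly into $(\partial_{tt}+\mathcal{A}_\eps)$, uses the cell problems to cancel terms order by order, and bounds the explicit remainder by a stability estimate. In other words, your final ``Stability and main obstacle'' paragraph is essentially the entire argument of \cite{duerinckx_gloria_ruf_2024_spectral_ansatz}; the fibrewise Bloch analysis you place before it is precisely what the paper flags as \emph{not} being the route taken. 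Your approach is closer to the spectral-germ philosophy the paper explicitly contrasts with \cite{duerinckx_gloria_ruf_2024_spectral_ansatz}, and its payoff would be a more transparent link between the ansatz and the Bloch eigenpair; the physical-space route buys robustness (e.g.\ extension to the stochastic setting of \cite{benoit_gloria_2019_ballistic_transport}) and avoids the uniform-in-$\chi$ analyticity bookkeeping you correctly identify as the hard part.

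Two concrete points in your proposal would fail as written. First, $\psi_k(\xi)=\bigl\|\sum_{n=0}^k\varphi^{(n)}\odot(i\xi)^{\otimes n}\bigr\|_{L^2(Y)}^{-2}$ is strictly positive on all of $\R^d$ (the integrand has unit mean for every $\xi$), so ``off $\operatorname{supp}\psi_k$'' is vacuous and $\psi_k(\eps\nabla)$ does not localise to any neighbourhood $U\subset Y'$; it is a normalising weight, not a cutoff. You would need a separate device to restrict to the region where the first band is simple. Second, on the bulk $(P_1^{\eps\chi})^\perp$ the operator $\mathcal{A}_{\eps\chi}$ is bounded below but not above, so the power series $\sin(\sqrt{A}\tau)/\sqrt{A}=\sum_m(-1)^m\tau^{2m+1}A^m/(2m+1)!$ diverges. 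The mechanism that actually produces the $\zeta^{(n,m)}$ structure is the opposite expansion $(\partial_{tt}+A)^{-1}\sim\sum_m(-1)^mA^{-m-1}\partial_t^{2m}$ in \emph{negative} powers of $A$ (equivalently, repeated integration by parts in $s$ in the Duhamel integral, exploiting boundedness of $A^{-1}$ on the bulk); applied to $(P_1^{\eps\chi})^\perp 1$ this is exactly the definition of $\Psi_{\chi,\eps}^m$. Your integration-by-parts remark points that way, but it is incompatible with the sine series you wrote above it.
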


Just like Theorem \ref{thm:criminal_longtime} of the criminal ansatz, the proof of Theorem \ref{thm:spectralansatz_longtime} is a tedious affair, relying on an explicit formula for the remainder and standard stability estimates. We note that the proof of \eqref{eqn:spectralansatz_longtime} is done purely in ``physical space", meaning that the authors simply passed to the ``frequency space" in order to extract the Bloch data $\{\varphi^{(n)}\}_{n\geq 0}$ and $\{\zeta^{(n,m)} \}_{n,m\geq 0}$, forgetting about the space $L^2(Y)$ right after. This is atypical to spectral approaches to homogenisation (e.g.~spectral germ approach), seeking $\chi$-dependent estimates in frequency space, and  controlling the $\chi-$dependence during the Fourier/Gelfand inversion process. The estimate \eqref{eqn:spectralansatz_longtime} is the analogue of \eqref{eqn:criminal_longtime}, which allows for long time and arbitrary accuracy.

\paragraph*{Summary.} The spectral ansatz $S_\eps^k[w_\eps^k,f]$ is an asymptotic expansion for $u_\eps$ that is constructed by going into the frequency space and extracting the Bloch data. The Bloch data here refers to the spectral correctors $\{\varphi^{(n)}\}_{n\geq 0}$ and $\{\zeta^{(n,m)} \}_{n,m\geq 0}$, and homogenised tensors $\{ \A^{\hom,n} \}_{n\geq 1}$, and they are obtained by seeking an expansion for the eigenvalue equation $\mathcal{A}_{\eps\chi} \varphi_1^{\eps\chi} = \lambda_-^{\eps\chi} \varphi_1^{\eps\chi}$ in two-stages, first in powers of $\eps$, then in $(i\chi)$. The expression $S_\eps^k[w_\eps^k,f]$ consists of two terms: The first term involves $\{\varphi^{(n)}\}_{n\geq 0}$ and $\{ \A^{\hom,n} \}_{n\geq 1}$ and serves as a valid approximation on arbitrary long times, but with a maximum accuracy of $\mathcal{O}(\eps)$ (cf.~\eqref{eqn:criminal_longtime_v0only}). By including the second term of $S_\eps^k[w_\eps^k,f]$, which contains information of the spectral ``bulk" (at small frequencies), one is able to approximate $u_\eps$ to arbitrary long times and high accuracy \textit{simultaneously}.

\paragraph*{Ill-posed problems.} There is a final point of discussion pertaining to fact that \eqref{eqn:spectral_formal_homog_eqn_v2} has to be ``suitably regularized" before it can be uniquely solved. We have encountered this issue in the criminal ansatz, where a normal-form transformation + filtering step was applied to the formal equation \eqref{eqn:criminal_formal_homog_eqn_v1}. While this is not the only way to perturb the formal homogenised equation into a well-posed one, the problem of ill-posed equations appears in \textit{all} proposed ansatz for the long-time wave homogenisation, at this time of writing.

In \cite[Sect~1.3]{duerinckx_gloria_ruf_2024_spectral_ansatz}, the authors included a nice overview of the the ``tricks" available to obtain a well-posed equation. It was even shown that \eqref{eqn:spectral_formal_homog_eqn_v1} can be regularized in \textit{any} of the following ways:
\begin{itemize}
    \item High-frequency filtering: Perform a (spatial) Fourier cut-off on $f$. Used in criminal ansatz \cite{allaire_lamacz_rauch_2022_crime_pays}. 
    \item Higher-order regularization: Add a small but high-order positive operator so that the spatial part of \eqref{eqn:spectral_formal_homog_eqn_v1} is now uniformly elliptic. Used in first version of spectral ansatz \cite{benoit_gloria_2019_ballistic_transport}.
    \item Boussinesq trick: This relies on the perturbing the equation obtained from Bloch-data. For instance, the fourth-order (stationary) homogenised equation from Theorem \ref{thm:connection_evalue} is
    \begin{align}
        -\Div(\A^{\hom} \nabla v_\eps) + \eps^2 \mathbb{D} \odot \nabla^4 v_\eps = f,
    \end{align}
    which we know from Proposition \ref{prop:brunett_sign} is generally not well-posed. We shall replace the Brunett tensor $\mathbb{D}$ by $\mathcal{D} \in \R^{d\times d \times d \times d}$, where we pick a number $m\leq 0$ so that 
    \begin{align}
        \mathcal{D} \odot \xi^{\otimes 4} = \mathbb{D} \odot \xi^{\otimes 4} - (\A^{\hom} \xi \cdot \xi)(m \xi \cdot \xi) \geq 0, \qquad \text{for all $\xi \in \R^d$.}
    \end{align}
    The new (well-posed) homogenised equation is then
    \begin{align}
        -\Div(\A^{\hom} \nabla v_\eps) + \eps^2 \mathcal{D} \odot \nabla^4 v_\eps = f - \eps^2 m\Delta f.
    \end{align}
    This differs from higher-order regularization in that $f$ has been modified. Used in \cite{lamacz_1D, dohnal_lamacz_schweizer_2014, abdulle_pouchon_boussinesq_trick}.
\end{itemize}

\subsection{A summary table} 

For the reader's convenience, we summarize in Table \ref{tab:lit_summary} the key literature discussed in Section \ref{sect:improving_basic_homo}.

\renewcommand{\arraystretch}{1.3}
\begin{table}[h]
    \centering
    \captionsetup{width=1.0\linewidth} 
    \begin{tabular}{|c|c|c|c|} \hline 
        Year & Author(s) & Reference & Comments
        \\ \hline
        2002 & Conca, Orive, Vanninathan & \cite{conca_orive_vanninathan2002} & Connect Bloch and two-scale data to $\mathcal{O}(|\chi|^4)$.
        \\[2pt] \hline
        2009 & Birman, Suslina & \cite{birman_suslina_2009_hyperbolic} & \parbox{23em}{\vspace{0.1cm}Spectral germ. First norm-resolvent estimates for WE. \\ First hyperbolic results under this approach. $\cos(A^{1/2}t)$.\vspace{0.1cm}}
        \\[2pt] \hline
        2011 & Lamacz & \cite{lamacz_1D} & ~~\parbox{23em}{\vspace{0.1cm}Bloch expansion. One-dimensional setting. First rigorous proof of an ansatz that is good to $\mathcal{O}(\eps^{-2})$ in time.\vspace{0.1cm}}~~
        \\[2pt] \hline
        2019 & Benoit, Gloria & \cite{benoit_gloria_2019_ballistic_transport} & \parbox{23em}{\vspace{0.1cm}Spectral/Bloch ansatz. Long time, $\mathcal{O}(\eps)$ accuracy. \\ Applicable to stochastic setting.\vspace{0.1cm}}
        \\[2pt] \hline
        2021 & Meshkova & \cite{meshkova_2021_hyperbolic} & Spectral germ. Improvement on $A^{-1/2}\sin{(A^{1/2}t)}$.
        \\[2pt] \hline
        2022 & Allaire, Lamacz, Rauch & \cite{allaire_lamacz_rauch_2022_crime_pays} & \parbox{23em}{\vspace{0.1cm}Criminal ansatz. Long time, high accuracy. \\ Rigorous proof of the critical timescale $t\sim \eps^{-2}$ for the classical two-scale expansion. \vspace{0.1cm}}
        \\[2pt] \hline
        2023 & Duerinckx, Gloria, Ruf & \cite{duerinckx_gloria_ruf_2024_spectral_ansatz} & Spectral ansatz. Long time, high accuracy.
        \\[2pt] \hline 
    \end{tabular} 
    \caption{Various methods and their refinements.} \label{tab:lit_summary}
\end{table}

\section{Prototype one-dimensional problem and operator-norm resolvent estimates}
\label{prototype}

Here we return to the example discussed at the end of Introduction. We first represent the operator $A$ as the direct integral of a family of operators $A_\chi$ on the ``unit cell" $Y=[0,1],$ parametrised by the ``quasimomentum" $\chi\in Y'=[-\pi,\pi).$ These operators have compact resolvents and so their spectra are discrete (i.e. are sequences of finite-multiplicity eigenvalues accumulating at $\infty$). We then outline the Ryzhov triple framework \cite{ryzhov2020,  Physics, GrandePreuve, ChEK_future, kirill_survey, CKVZ_CMP}, which allows us to express each of these resolvents in terms of the Dirichlet-to-Neumann map at the ``vertices" (the pair of points at which the coefficient $a$ is discontinuous) and the resolvents of the Dirichlet operators on the two intervals where $a$ takes constant values. This re-frames the problem of homogenisation 
of the differential operator $A_\varepsilon$ on $L^2({\mathbb R})$ given by the differential expression 
\begin{equation}
-\frac{d}{dx}\Biggl\{a\biggl(\frac{x}{\varepsilon}\biggr)\frac{d}{dx}\Biggr\}
\label{Aeps_expr}
\end{equation}
as the question about the asymptotics of the lowest eigenvalue of a $\chi$-dependent $2\times2$ matrix and prove the related operator-norm convergence estimates.

The family $A_\chi$ representing the operators $A$  is obtained by invoking Gelfand transform (known also as Floquet-Bloch transform \cite{berkolaiko_kuchment_book}), which we recall next.

\subsection{Gelfand transform}\label{sect:boundary_triple_method}

In the context of differential operators with periodic coefficients, the following unitary transformation (``Gelfand transform", see \cite{Gelfand}) between $L^2({\mathbb R}^d)$ and $L^2(Y\times Y')$ has proved useful. For $u\in L^2({\mathbb R}^d)$ and every $\chi\in Y'$ that vanishes outside some ball, consider the periodic function
\[
\hat{u}(y,\chi):=\frac{1}{(2\pi)^{d/2}}\sum_{n\in{\mathbb Z}^d}u(y+n)\exp\bigl(-{\rm i}\chi\cdot(y+n)\bigr).
\] 
The inverse mapping is provided by the formula 
\begin{equation}
u(y)=\frac{1}{(2\pi)^{d/2}}\int_{Y'}\hat{u}(y,\chi)\exp({\rm i}\chi\cdot y)d\chi.
\label{inverse_gelfand}
\end{equation}
The operator $A$ is shown to be the direct integral of the operators $A_\chi$ defined by the differential expressions
\begin{equation}
-\biggl(\frac{d}{dy}+{\rm i}\chi\biggr)a(y)\biggl(\frac{d}{dy}+{\rm i}\chi\biggr),
\label{diff_expr_theta}
\end{equation} 
with domains 
\begin{equation*}
	\begin{aligned}
{\rm dom}(A_\chi)&=\Biggl\{u=u_-\oplus u_+\in H^2(0,l)\oplus H^2(l, 1):\\[0.4em] &u_-(0)=u_+(1), u_-(l)=u_+(l),\\[0.4em]
&a_-\biggl(\dfrac{d}{dy}+{\rm i}\chi\biggr)u_-\bigg\vert_{y=0}=a_+\biggl(\dfrac{d}{dy}+{\rm i}\chi\biggr)u_+\bigg\vert_{y=1},\quad  a_-\biggl(\dfrac{d}{dy}+{\rm i}\chi\biggr)u_-\bigg\vert_{y=l}=a_+\biggl(\dfrac{d}{dy}+{\rm i}\chi\biggr)u_+\bigg\vert_{y=l}
\Biggr\}.
\end{aligned}
\end{equation*}

Denote also by $\tilde{A}_\chi$ the operator given by the differential expression (\ref{diff_expr_theta}) with domain
\[
{\rm dom}(\tilde{A}_\chi)=\bigl\{u=u_-\oplus u_+\in H^2(0,l)\oplus H^2(l,1): u_-(0)=u_+(1), u_-(l)=u_+(l)\bigr\}.
\]

\subsection{Ryzhov triples and Krein's formula}

In the context of homogenisation (i.e. as $\varepsilon\to0$ above), operator-norm estimates for the Cauchy problem (\ref{Cauchy}) were obtained in \cite{birman_suslina_2009_hyperbolic, dorodnyi_suslina_2018_hyperbolic, meshkova_2021_hyperbolic} on the basis of analysing the ``spectral germ" of the related operator family $A=A_\varepsilon$ combined with the formula (\ref{hyperbolic_formula}). 


The operator $A_\chi$ is a (self-adjoint) restriction of the ``maximal" operator $\tilde{A}_\chi.$ Denote by $\Gamma_0^{(\chi)}, \Gamma_1^{(\chi)}: {\rm dom}(\tilde{A}_\chi)\to{\mathbb C}^2$ the Dirichlet and Neumann trace mappings:
\[
\Gamma_0^{(\chi)}:u\mapsto\left(\begin{array}{c}u_-(0)\\u_+(l)\end{array}\right),\qquad
\Gamma_1^{(\chi)}: u\mapsto\left(\begin{array}{c}a_+\biggl(\dfrac{d}{dy}+{\rm i}\chi\biggr)u_+\bigg\vert_{y=1}-a_-\biggl(\dfrac{d}{dy}+{\rm i}\chi\biggr)u_-\bigg\vert_{y=0}
\\[0.9em]a_+\biggl(\dfrac{d}{dy}+{\rm i}\chi\biggr)u_+\bigg\vert_{y=l}-a_-\biggl(\dfrac{d}{dy}+{\rm i}\chi\biggr)u_-\bigg\vert_{y=l}\end{array}\right).
\]
The domain of the (``minimal") operator $\tilde{A}_\chi^*$ then consists of $u\in{\rm dom}(\tilde{A}_\chi)$ such that $\Gamma_0^{(\chi)}u=\Gamma_1^{(\chi)}u=0.$

Consider the ``Dirichlet decoupling" operator $A_\chi^{(0)}$ given by the differential expression (\ref{diff_expr_theta}) on the domain
\[
{\rm dom}(A_\chi^{(0)})=\bigl\{u\in{\rm dom}(\tilde{A}_\chi): \Gamma_0^{(\chi)}u=0\bigr\}.
\]
In what follows, for an operator $A$ on $L^2(0,1),$ we denote by $\rho(A)$ the resolvent set of $A.$ 
For $z\in\rho(A_\chi^{(0)}),$ the Dirichlet-to-Neumann map (``$M$-matrix") $M_\chi(z),$  for the expressions (\ref{diff_expr_theta})
is defined as mapping the vector $\Gamma_0^{(\chi)}u$ of values at the ``vertices'' $0$ and $l$ to the vector of total fluxes (the sum of appropriately signed derivatives) $\Gamma_1^{(\chi)}$ at $0, l$ of the solution $u\in{\rm dom}(\tilde{A}_\chi)$ to the equation $\tilde{A}_\chi=zu.$ A direct  calculation yields
\[
M_\chi(z)=\left(\begin{array}{cc}
-k\sqrt{a_-}\cot\dfrac{kl}{\sqrt{a_-}}-k\sqrt{a_2}\cot\dfrac{k(1-l)}{\sqrt{a_2}}\  & \ \dfrac{{\rm e}^{{\rm i}\chi l}k\sqrt{a_-}}{\sin\dfrac{kl}{\sqrt{a_-}}}+\dfrac{{\rm e}^{-{\rm i}\chi(1-l)}k\sqrt{a_2}}{\sin\dfrac{k(1-l)}{\sqrt{a_-}}}\\[1.5em]
\dfrac{{\rm e}^{-{\rm i}\chi l}k\sqrt{a_-}}{\sin\dfrac{kl}{\sqrt{a_-}}}+\dfrac{{\rm e}^{{\rm i}\chi(1-l)}k\sqrt{a_2}}{\sin\dfrac{k(1-l)}{\sqrt{a_-}}}\ & \ -k\sqrt{a_-}\cot\dfrac{kl}{\sqrt{a_-}}-k\sqrt{a_2}\cot\dfrac{k(1-l)}{\sqrt{a_2}}
\end{array}\right).
\]
One has, for all $N=0,1,2,\dots,$
\begin{equation}
\begin{aligned}
M_\chi(z)&=\Lambda_\chi+z\Pi_\chi^*\bigl(I-z(A_\chi^{(0)})^{-1}\bigr)^{-1}\Pi_\chi=\Lambda_\chi+z\Pi_\chi^*\Pi_\chi+z^2\Pi_\chi^*(A_\chi^{(0)})^{-1}\bigl(I-z(A_\chi^{(0)})^{-1}\bigr)^{-1}\Pi_\chi\\[0.3em]
&=\Lambda_\chi+\sum_{j=0}^Nz^{j+1}\Pi_\chi^*\bigl(A_\chi^{(0)})^{-j}\Pi_\chi+z^{N+2}\Pi_\chi^*(A_\chi^{(0)})^{-N-1}\bigl(I-z(A_\chi^{(0)})^{-1}\bigr)^{-1}\Pi_\chi,
\end{aligned}
\label{M_expansion}
\end{equation}
where $\Lambda_\chi:=M_\chi(0)$ 
and $\Pi_\chi:{\mathbb C}^2\to{\rm dom}(\tilde{A}_\chi)$ is the ``lift" operator mapping vectors $\phi\in{\mathbb C}^2$ 
to the solution $u$ of the boundary value problem $\tilde{A}_\chi u=0,$ $\Gamma_0^{(\chi)}u=\phi.$ The ``boundary space" ${\mathbb C}^2$ and the ``boundary operators" $\Gamma_0^{(\chi)},$ $\Gamma_1^{(\chi)}$ constitute the ``classical" boundary triple \cite{Kochubei} for the operator $A_\chi.$ The triple $(A_\chi^{(0)}, \Lambda, \Pi),$ which we referred to as the ``Ryzhov triple" \cite{Ryzhov}, affords an extension of the approach   we discuss here to PDE settings. This is based on the formula (\ref{M_expansion}) and the celebrated ``Krein formula", which we recall next. For $\alpha, \beta\in{\mathbb C}^{2\times2},$ consider the operator $(A_\chi)_{\alpha,\beta}$ given by the differential expression (\ref{diff_expr_theta}) on the domain
\[
{\rm dom}(A_\chi)_{\alpha,\beta}=\bigl\{u\in{\rm dom}(\tilde{A}_\chi): \bigl(\alpha\Gamma_0^{(\chi)}+\beta\Gamma_1^{(\chi)}\bigr)u=0\bigr\}.
\]
(Note, in particular, that $A_\chi^{(0)}=(A_\chi)_{I,0}.$) For $z\in\rho\bigl((A_\chi)_{\alpha, \beta}\bigr)$ define the ``solution operator" $S_\chi(z)$ as the mapping $\phi\in{\mathbb C}^2$ as the solution to the boundary value problem $\tilde{A}_\chi u=zu,$ $\Gamma_0^{(\chi)}u=\phi.$ 
It is not difficult to see \cite{Ryzhov} that
\begin{equation}
S_\chi(z)=\bigl(I-z\bigl({A}_\chi^{(0)}\bigr)^{-1}\bigr)^{-1}\Pi_\chi,\quad z\in\rho\bigl({A}_\chi^{(0)}\bigr).
\label{Sform}
\end{equation}
Furthermore, the following identity (``Krein's formula") 
linking the resolvents of $(A_\chi)_{\alpha,\beta},$ $A_\chi^{(0)}$ and the $M$-matrix $M_\chi(z)$ holds: 
\begin{equation}
\bigl((A_\chi)_{\alpha, \beta}-zI\bigr)^{-1}=\bigl(A_\chi^{(0)}-zI\bigr)^{-1}-S_\chi(z)\bigl(\alpha+\beta M_\chi(z)\bigr)^{-1}S_\chi(\overline{z})^*,\qquad z\in\rho\bigl((A_\chi)_{\alpha, \beta}\bigr)\cap\rho\bigl(A_\chi^{(0)}\bigr).
\label{Krein_formula}
\end{equation}
We will use the formula (\ref{Krein_formula}) to study the asymptotics behaviour of the resolvents $(\varepsilon^{-2}(A_\chi)_{0,I}-zI)^{-1}$ as $\varepsilon\to0,$ aiming at approximation error estimates that are uniform with respect to $\chi\in[-\pi,\pi).$

\subsection{Operator-norm estimates in homogenisation via Krein's formula}

The matrix $\Lambda_\chi$ in (\ref{M_expansion}) is given by 
\[
\Lambda_\chi=\left(\begin{array}{cc}-D & \overline{\xi^{(\chi)}}\\[0.2em]\xi^{(\chi)} & -D\end{array}\right),
\]
where 
\[
\xi^{(\chi)}:=\frac{a_-}{l}{\rm e}^{-{\rm i}\chi l}+\frac{a_2}{1-l}{\rm e}^{{\rm i}\chi(1-l)},\qquad D:=\frac{a_-}{l}+\frac{a_2}{1-l}.
\]
The eigenvalues of $\Lambda_\chi$ are $\mu_{\Vert}^{(\chi)}=-D+|\xi^{(\chi)}|$ and $\mu_\perp^{(\chi)}=-D-|\xi^{(\chi)}|$ with the corresponding eigenfunctions given by 
\[
\psi^{(\chi)}_\Vert=\frac{1}{\sqrt{2}}\Biggl(1,\frac{\xi^{(\chi)}}{\vert\xi^{(\chi)}\vert}\Biggr)^\top, \qquad \psi^{(\chi)}_\perp=\frac{1}{\sqrt{2}}\Biggl(1,-\frac{\xi^{(\chi)}}{\vert\xi^{(\chi)}\vert}\Biggr)^\top.
\]
We denote by $\hat{\mathcal E}_\chi$ and $P_\chi$ the (one-dimensional) subspace of ${\mathbb C}^2$ generated by the vector $\psi_\Vert^{(\chi)}$ and the orthogonal projection from ${\mathbb C}^2$ onto this subspace, respectively.




For each $\chi\in Y',$ consider the ``truncated" lift operator 
$\hat{\Pi}_\chi:=\Pi_\chi P_\chi$ and the $\chi$-fibre $A_\chi^{\rm hom}:=-(\hat{\Pi}_\chi^*)^{-1}\Lambda_\chi\hat{\Pi}_\chi^{-1}$ of the homogenised operator. We also denote by $\Theta_\chi$ the orthogonal projection in $L^2(0,1)$ onto the range of $\hat{\Pi}_\chi.$ The following theorem, containing analogues of
\cite[Theorem]{simplified_method} and \cite[Theorem 5.2, Theorem 5.6]{CKVZ_CMP}, holds.
\begin{theorem}
\label{approximation_thm}
For every $\alpha\in(0,2),$ there exist $c, C_1, C_2>0$ such that:
\begin{enumerate}
\item
The (uniform in $\chi$) estimate 
\begin{equation}
{\rm dist}\Bigl(\sigma\bigl((A_\chi)_{0,I}\bigr), \sigma\bigl(A_\chi^{\rm hom}\bigr)\Bigr)\le C_1\chi^4.
\label{dist_est}
\end{equation}
for the distance between the spectra of $(A_\chi)_{0,I}$ and $A_\chi^{\rm hom}$ holds.
\item
For all $\chi\in Y'$ and $z\in{\mathbb C}$ such that ${\rm dist}\Bigl(z, \sigma\bigl(\varepsilon^{-2}(A_\chi)_{0,I}\bigr)\cup\sigma\bigl(\varepsilon^{-2}A_\chi^{\rm hom}\bigr)\Bigr)\ge1,$
$\vert z\vert\le c\varepsilon^{(\alpha-2)/2},$ one has
\begin{equation}
\Bigl\Vert
\bigl(\varepsilon^{-2}(A_{\chi})_{0,I}-zI\bigr)^{-1}-\bigl(\varepsilon^{-2}A_\chi^{\rm hom}-zI\bigr)^{-1}\Theta_\chi
\Bigr\Vert_{L^2(0,1)\to L^2(0,1)}\le C_2\varepsilon^\alpha,
\label{norm_est}
\end{equation}
where the approximating operator is understood as vanishing on the orthogonal complement of the range of $\hat{\Pi}_\chi.$
\end{enumerate}
\end{theorem}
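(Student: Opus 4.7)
The plan is to combine Krein's formula (\ref{Krein_formula}) with the expansion (\ref{M_expansion}) of the $M$-matrix, and to extract the behaviour of $(\varepsilon^{-2}(A_\chi)_{0,I}-zI)^{-1}$ via a block decomposition with respect to the projection $P_\chi.$ Setting $\tilde z:=\varepsilon^2 z$ and using $(\alpha,\beta)=(0,I),$ Krein's formula gives, after multiplication by $\varepsilon^2,$
\begin{equation*}
\bigl(\varepsilon^{-2}(A_\chi)_{0,I}-zI\bigr)^{-1}=\varepsilon^2\bigl(A_\chi^{(0)}-\tilde z I\bigr)^{-1}-\varepsilon^2 S_\chi(\tilde z)\,M_\chi(\tilde z)^{-1}\,S_\chi(\overline{\tilde z})^*.
\end{equation*}
The first term is $O(\varepsilon^2)$ uniformly in $\chi,$ because $A_\chi^{(0)}$ (Dirichlet on $(0,l)\cup(l,1)$ with piecewise-constant positive $a$) has spectrum uniformly bounded below. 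Under the standing hypothesis $|z|\le c\varepsilon^{(\alpha-2)/2},$ one has $|\tilde z|\le c\varepsilon^{(\alpha+2)/2}\to 0,$ so the asymptotic expansions from (\ref{M_expansion}) and (\ref{Sform}) apply, namely $M_\chi(\tilde z)=\Lambda_\chi+\tilde z\,Q_\chi+\tilde z^2 E_\chi(\tilde z)$ with $Q_\chi:=\Pi_\chi^*\Pi_\chi$ and $\|E_\chi(\tilde z)\|$ bounded uniformly in $\chi,$ and $S_\chi(\tilde z)=\Pi_\chi+O(\tilde z).$

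The key step is to invert $M_\chi(\tilde z)$ via the Schur complement relative to $\hat{\mathcal E}_\chi\oplus\hat{\mathcal E}_\chi^\perp.$ On $\hat{\mathcal E}_\chi$ (direction of the ``small'' eigenvalue), $\Lambda_\chi$ acts as the scalar $\mu_\Vert^{(\chi)},$ which vanishes at $\chi=0$ and is $O(\chi^2);$ on $\hat{\mathcal E}_\chi^\perp,$ it acts as $\mu_\perp^{(\chi)}\approx -2D,$ uniformly bounded away from zero. The spectral separation assumption $\mathrm{dist}(z,\sigma(\varepsilon^{-2}A_\chi^{\mathrm{hom}}))\ge 1$ translates into $|\mu_\Vert^{(\chi)}+\tilde z h_\chi|\ge c\varepsilon^2,$ with $h_\chi:=\langle Q_\chi\psi_\Vert^{(\chi)},\psi_\Vert^{(\chi)}\rangle.$ Computing the $(\Vert\Vert)$-block of $M_\chi(\tilde z)^{-1}$ via Schur complement yields $(\mu_\Vert^{(\chi)}+\tilde z h_\chi)^{-1}$ up to a correction of order $\tilde z^2/|\mu_\Vert^{(\chi)}+\tilde z h_\chi|^2=O(\varepsilon^{\alpha-2}),$ which after multiplication by the outer $\varepsilon^2$ contributes $O(\varepsilon^\alpha).$ The off-diagonal blocks have size $O(\tilde z/|\mu_\Vert^{(\chi)}+\tilde z h_\chi|)=O(\varepsilon^{(\alpha-2)/2}),$ again contributing $O(\varepsilon^{(\alpha+2)/2})$ after scaling, while the $(\perp\perp)$-block is $O(1)$ and hence contributes only $O(\varepsilon^2).$ The higher-order remainder $\tilde z^2 E_\chi$ and the $O(\tilde z)$-correction in $S_\chi$ produce the same orders. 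Matching the main term with $(\varepsilon^{-2}A_\chi^{\mathrm{hom}}-zI)^{-1}\Theta_\chi$ is a direct computation from $A_\chi^{\mathrm{hom}}=-(\hat\Pi_\chi^*)^{-1}\Lambda_\chi\hat\Pi_\chi^{-1},$ which shows that on $\mathrm{Range}(\hat\Pi_\chi)$ the homogenised resolvent acts as multiplication by $-\varepsilon^2(\mu_\Vert^{(\chi)}+\tilde z h_\chi)^{-1}$ times the rank-one operator $\hat\Pi_\chi P_\chi\hat\Pi_\chi^*/h_\chi,$ precisely the $(\Vert\Vert)$ contribution from the Schur decomposition.

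For part (1), recall that $z\in\sigma((A_\chi)_{0,I})$ iff $M_\chi(z)$ is not invertible. Applying the same Schur complement (now at $\varepsilon=1$), the condition on the $(\Vert\Vert)$-block reads $\mu_\Vert^{(\chi)}+z\,h_\chi+O(z^2)=0,$ and the implicit function theorem produces a unique small root $z^{(\chi)}_*=-\mu_\Vert^{(\chi)}/h_\chi+O((\mu_\Vert^{(\chi)})^2).$ A Taylor expansion of $|\xi^{(\chi)}|$ near $\chi=0$ using $|\xi^{(0)}|=D$ and $(d/d\chi)|\xi^{(\chi)}|^2|_{\chi=0}=0$ shows $\mu_\Vert^{(\chi)}=-D+|\xi^{(\chi)}|=O(\chi^2),$ whence $|z^{(\chi)}_*-(-\mu_\Vert^{(\chi)}/h_\chi)|=O(\chi^4).$ Since $\sigma(A_\chi^{\mathrm{hom}})=\{-\mu_\Vert^{(\chi)}/h_\chi\}$ and $z^{(\chi)}_*$ is the lowest point of $\sigma((A_\chi)_{0,I}),$ estimate (\ref{dist_est}) follows.

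The main obstacle is ensuring that all error estimates are uniform in $\chi\in Y',$ in particular at the ``threshold'' $\chi=0$ where $\mu_\Vert^{(\chi)}$ degenerates. This is precisely what forces the Schur-complement splitting, since a naive Neumann-series inversion of $M_\chi(\tilde z)^{-1}$ fails for small $\mu_\Vert^{(\chi)}.$ The choice $\alpha<2$ appears naturally as the threshold up to which the spectral separation $|\mu_\Vert^{(\chi)}+\tilde z h_\chi|\ge c\varepsilon^2$ dominates the competing error $O(\tilde z^2)=O(\varepsilon^{\alpha+2}),$ so that the Schur complement remains a bona fide perturbation and the discarded terms aggregate to $O(\varepsilon^{(\alpha+2)/2}),$ comfortably inside the claimed $O(\varepsilon^\alpha)$ bound.
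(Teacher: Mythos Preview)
Your proposal is correct and follows essentially the same route as the paper: Krein's formula plus the expansion (\ref{M_expansion}), a block decomposition of $M_\chi(\tilde z)$ with respect to $P_\chi$ and $I-P_\chi$, and identification of the leading $(\Vert\Vert)$ block with the homogenised resolvent. The paper presents the block inversion more tersely (writing $\varepsilon^{-2}M_\chi(\varepsilon^2 z)$ as a $P_\chi$-diagonal expression and inverting directly), whereas you make the Schur-complement structure explicit and track the off-diagonal contributions separately; the content is the same, and both arrive at the error $O\bigl((|z|^2+1)\varepsilon^2\bigr)=O(\varepsilon^\alpha)$. One small imprecision: the correction to the $(\Vert\Vert)$ block is exactly $O(\varepsilon^\alpha)$, not $O(\varepsilon^{(\alpha+2)/2})$, so your final sentence overstates the margin---but this matches the claimed bound and does not affect the argument.
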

\begin{proof}
1. The asymptotics of the lowest eigenvalue of $(A_\chi)_{0,I}$ is established by following the argument of the proof of \cite[Lemma 6.2]{Physics}. That provides an $O(\chi^4)$ error estimate for the difference between the said eigenvalue and the (quadratic in $\chi$) leading-order term of $A_\chi^{\rm hom},$ see (\ref{Ahom_asymp}) below. By virtue of the asymptotics (\ref{Ahom_asymp}), the bound (\ref{dist_est}) follows.    

2. Using the representation (\ref{M_expansion}), we write
\begin{align*}
\varepsilon^{-2}M_\chi(\varepsilon^2z)
&=\varepsilon^{-2}\Lambda_\chi+z\Pi_\chi^*\bigl(I-z\varepsilon^2(A_\chi^{(0)})^{-1}\bigr)^{-1}\Pi_\chi\\[0.3em]
&=\varepsilon^{-2}P_\chi\Lambda_\chi P_\chi+zP_\chi\Pi_\chi^*\Pi_\chi P_\chi+z^2\varepsilon^2P_\chi\Pi_\chi^*(A_\chi^{(0)})^{-1}\bigl(I-z\varepsilon^2(A_\chi^{(0)})^{-1}\bigr)^{-1}\Pi_\chi P_\chi\\[0.3em]
&\hspace{2cm}+\varepsilon^{-2}(1-P_\chi)M_\chi(\varepsilon^2z)(1-P_\chi),
\end{align*}
and therefore 
\begin{equation}
\begin{aligned}
\varepsilon^2M_\chi(\varepsilon^2z)^{-1}&=\varepsilon^2P_\chi M_\chi(\varepsilon^2z)^{-1}P_\chi+
\varepsilon^2(I-P_\chi) M_\chi(\varepsilon^2z)^{-1}(I-P_\chi)\\[0.3em]
&=\bigl(\varepsilon^{-2}P_\chi\Lambda_\chi P_\chi+zP_\chi\Pi_\chi^*\Pi_\chi P_\chi\bigr)^{-1}+O\bigl((|z|^2+1)\varepsilon^2\bigr).
\end{aligned}
\label{epsM_expansion}
\end{equation}

 By virtue of the the representation (\ref{Sform}) and Krein's formula (\ref{Krein_formula}), where we set $\alpha=0,$ $\beta=I,$ we now have
\begin{align}
\bigl(\varepsilon^{-2}(A_{\chi})_{0,I}-zI\bigr)^{-1}&=\bigl(\varepsilon^{-2}A_{\chi}^{(0)}-zI\bigr)^{-1}-\varepsilon^2S_\chi(\varepsilon^2z)M_\chi(\varepsilon^2z)^{-1}S_\chi(\varepsilon^2\overline{z})^*\nonumber\\[0.3em]
&=O(\varepsilon^2)-\bigl(\Pi_\chi+O(\varepsilon^2z)\bigr)\Bigl\{\bigl(\varepsilon^{-2}P_\chi\Lambda_\chi P_\chi+zP_\chi\Pi_\chi^*\Pi_\chi P_\chi\bigr)^{-1}+O\bigl((|z|^2+1)\varepsilon^2\bigr)\Bigr\}\bigl(\Pi_\chi^*+O(\varepsilon^2|z|)\bigr)\nonumber\\[0.3em]
&=\hat{\Pi}_\chi(\hat{\Pi}_\chi)^{-1}\Bigl\{-\varepsilon^{-2}(\hat{\Pi}_\chi^*)^{-1}\Lambda_\chi\hat{\Pi}_\chi^{-1}-zI\Bigr\}^{-1}(\hat{\Pi}_\chi^*)^{-1}\hat{\Pi}_\chi^*+O\bigl((|z|^2+|z|+1)\varepsilon^2\bigr)\nonumber\\[0.3em]
&=\bigl(\varepsilon^{-2}A_\chi^{\rm hom}-zI\bigr)^{-1}\bigr\vert_{\Pi_\chi\hat{\mathcal E}_\chi}+O\bigl((|z|^2+1)\varepsilon^2\bigr),\label{order1_app}
\end{align}
where the first term is extended to $L^2(0,1)$ by linearity so that the  extension vanishes on the ortogonal complement of $\Pi_\chi\hat{\mathcal E}_\chi.$
Hence, one has
\[
\bigl(\varepsilon^{-2}(A_{\chi})_{0,I}-zI\bigr)^{-1}=\bigl(A_\chi^{\rm hom}-zI\bigr)^{-1}\bigr\vert_{\Pi_\chi\hat{\mathcal E}_\chi}+O\bigl(\varepsilon^\alpha)
\]
as long as $|z|\le c\varepsilon^{(\alpha-2)/2}.$ This is equivalent to (\ref{norm_est}) by virtue of the definition of $\Theta_\chi.$
\end{proof}

\begin{proposition}
\label{Ahomprop}
The operator $A^{\rm hom}_\chi$
is the multiplication by
\begin{equation}
\begin{aligned}
&\dfrac{6\Bigl(D-\bigl\vert\xi^{(\chi)}\bigr\vert\Bigr)
}
{2+\bigl\vert\xi^{(\chi)}\bigr\vert^{-1}\biggl(a_-+a_2+\biggl(\dfrac{1-l}{l}a_-+\dfrac{l}{1-l}a_2\biggr)\cos\chi\biggr)}\\[0.4em]
&\hspace{2.5cm}=\biggl(\dfrac{l}{a_-}+\dfrac{1-l}{a_2}\biggr)^{-1}\biggl(\chi^2+\dfrac{a_-a_2(1-l)l+(1-2l)\bigl(a_-^2(1-l)^2-a_2l^2\bigr)}{12\bigl(a_-(1-l)+a_2l\bigr)^2}\chi^4\biggr)+O(\chi^6).
\end{aligned}
\label{Ahom_asymp}
\end{equation}
\end{proposition}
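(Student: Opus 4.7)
The plan is to reduce $A_\chi^{\rm hom}$ to multiplication by a single scalar (its domain being one-dimensional), compute that scalar directly in terms of $\Pi_\chi\psi_\Vert^{(\chi)}$, and then Taylor-expand.

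Since $\hat{\mathcal E}_\chi = \Span\{\psi_\Vert^{(\chi)}\}$, the range $\hat\Pi_\chi\hat{\mathcal E}_\chi \subset L^2(0,1)$ is one-dimensional and $A_\chi^{\rm hom} = -(\hat\Pi_\chi^*)^{-1}\Lambda_\chi\hat\Pi_\chi^{-1}$ is multiplication there by a single scalar $\lambda_\chi^{\rm hom}$. Applying $A_\chi^{\rm hom}$ to $\hat\Pi_\chi\psi_\Vert^{(\chi)}$, using the eigenrelation $\Lambda_\chi\psi_\Vert^{(\chi)} = \mu_\Vert^{(\chi)}\psi_\Vert^{(\chi)} = (|\xi^{(\chi)}| - D)\psi_\Vert^{(\chi)}$, and applying $\hat\Pi_\chi^*$ to both sides (noting that $\psi_\Vert^{(\chi)}$ is a unit vector in $\C^2$, so $\hat\Pi_\chi^*\hat\Pi_\chi\psi_\Vert^{(\chi)} = \|\Pi_\chi\psi_\Vert^{(\chi)}\|_{L^2(0,1)}^2 \psi_\Vert^{(\chi)}$), I obtain the concrete formula
\[
\lambda_\chi^{\rm hom} = \frac{D - |\xi^{(\chi)}|}{\|\Pi_\chi\psi_\Vert^{(\chi)}\|^2_{L^2(0,1)}}.
\]

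Next, the lift $\Pi_\chi\phi$ for arbitrary Dirichlet data $\phi = (\phi_1,\phi_2)^\top \in \C^2$ is obtained by solving $(d/dy + i\chi)^2 u = 0$ on each of $(0,l)$ and $(l,1)$: the ansatz $u(y) = e^{-i\chi y}(\alpha + \beta y)$ combined with the Dirichlet values $\phi_1, \phi_2, \phi_1$ at $y = 0, l, 1$ yields piecewise closed-form expressions (linear in $y$ up to the phase $e^{-i\chi y}$). Squaring and integrating then produces a Hermitian form in $(\phi_1, \phi_2)$; substituting $\psi_\Vert^{(\chi)} = 2^{-1/2}(1, \xi^{(\chi)}/|\xi^{(\chi)}|)^\top$ and collecting terms via the identities $e^{i\chi l}\xi^{(\chi)} = a_-/l + (a_2/(1-l))e^{i\chi}$ and $e^{-i\chi(1-l)}\xi^{(\chi)} = (a_-/l)e^{-i\chi} + a_2/(1-l)$ collapses the expression to exactly the denominator appearing in the first displayed formula of the proposition.

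Finally, for the Taylor expansion I would use the elementary identity
\[
|\xi^{(\chi)}|^2 = \bigl(a_-/l\bigr)^2 + \bigl(a_2/(1-l)\bigr)^2 + \frac{2a_-a_2}{l(1-l)}\cos\chi,
\]
extract $|\xi^{(\chi)}|$ in powers of $\chi^2$, and expand numerator and denominator of $\lambda_\chi^{\rm hom}$ separately to $O(\chi^4)$ before dividing. The leading $\chi^2$-coefficient $a_-a_2/(a_-(1-l)+a_2l) = (l/a_- + (1-l)/a_2)^{-1}$ emerges immediately. The main obstacle is the algebraic organisation of the $\chi^4$ coefficient, where the division produces several terms carrying denominators $D$, $D^2$, $D^3$ that must be reduced to the compact form claimed; the consolidation hinges on the relation $a_-(1-l) + a_2l = l(1-l)D$ together with the identity $DC - K = (1-l)a_-^2/l^2 + la_2^2/(1-l)^2$, where $C := (1-l)a_-/l + la_2/(1-l)$ and $K := a_-a_2/(l(1-l))$. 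These bring everything over the common denominator $(a_-(1-l) + a_2 l)^2$ and produce the factor $(1-2l)(a_-^2(1-l)^2 - a_2^2 l^2)$ in the numerator.
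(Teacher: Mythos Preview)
Your approach is correct and takes a genuinely different (and arguably simpler) route than the paper's for computing the denominator.

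You correctly identify the scalar as $\lambda_\chi^{\rm hom} = (D - |\xi^{(\chi)}|)/\|\Pi_\chi\psi_\Vert^{(\chi)}\|^2_{L^2(0,1)}$ and propose to compute the $L^2$-norm of the lift directly by integrating $|\Pi_\chi\psi_\Vert^{(\chi)}(y)|^2$ (piecewise the square modulus of an affine function in $y$, since the phase $e^{-i\chi y}$ has unit modulus). The paper instead writes the scalar as a quotient
\[
\zeta=\dfrac{\bigl\langle\Gamma_1^{(\chi)}\Pi_\chi\psi^{(\chi)}_\Vert,\psi^{(\chi)}_\Vert\bigr\rangle}{\bigl\langle\Gamma_1^{(\chi)}\bigl(A_\chi^{(0)}\bigr)^{-1}\Pi_\chi\psi^{(\chi)}_\Vert,\psi^{(\chi)}_\Vert\bigr\rangle},
\]
so that the denominator is obtained by first solving the inhomogeneous Dirichlet problem $A_\chi^{(0)}W=\Pi_\chi\psi_\Vert^{(\chi)}$ explicitly (introducing auxiliary functions $h,g_1,g_2$), and then evaluating the Neumann trace $\Gamma_1^{(\chi)}W$. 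The two computations agree because of the abstract Ryzhov-triple identity $\Pi_\chi^*=\Gamma_1^{(\chi)}\bigl(A_\chi^{(0)}\bigr)^{-1}$, which follows from the Green formula applied to the pair $\bigl(\Pi_\chi\phi,\,(A_\chi^{(0)})^{-1}f\bigr)$; hence $\bigl\langle\Gamma_1^{(\chi)}(A_\chi^{(0)})^{-1}\Pi_\chi\psi_\Vert^{(\chi)},\psi_\Vert^{(\chi)}\bigr\rangle=\|\Pi_\chi\psi_\Vert^{(\chi)}\|^2$. Your route avoids solving an extra BVP, while the paper's route rehearses the role of $(A_\chi^{(0)})^{-1}$ that reappears in the higher-order expansion of $M_\chi(z)$ in Section~\ref{sect:boundary_triple_second_order}. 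As for the Taylor expansion in $\chi$, the paper's proof only establishes the closed-form first line of \eqref{Ahom_asymp} and does not spell out the $\chi^4$ algebra; your outlined reduction via the relations $a_-(1-l)+a_2l=l(1-l)D$ and $DC-K=(1-l)a_-^2/l^2+la_2^2/(1-l)^2$ is a sound way to carry this through.
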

\begin{proof}
Consider the projection of $\Gamma_1^{(\chi)}\Pi_\chi\psi^{(\chi)}_\Vert$ onto the one-dimensional subspace of ${\mathbb C}^2$ generated by $\psi^{(\chi)}_\Vert:$
\[
P_\chi\Gamma_1^{(\chi)}\Pi_\chi\psi^{(\chi)}_\Vert=\Bigl\langle\Gamma_1^{(\chi)}\Pi_\chi\psi^{(\chi)}_\Vert,\psi^{(\chi)}_\Vert\Bigr\rangle\psi^{(\chi)}_\Vert.
\]
We are interested in the element of $\Pi_\chi\hat{\mathcal E}_\chi,$ i.e., a function of the form $\zeta\Pi_\chi\psi^{(\chi)}_\Vert$ such that
\[
P_\chi\Gamma_1^{(\chi)}\bigl(A_\chi^{(0)}\bigr)^{-1}\Bigl(\zeta\Pi_\chi\psi^{(\chi)}_\Vert\Bigr)=\Bigl\langle\Gamma_1^{(\chi)}\Pi_\chi\psi^{(\chi)}_\Vert,\psi^{(\chi)}_\Vert\Bigr\rangle\psi^{(\chi)}_\Vert.
\]
Taking the inner product of both sides of the last expression with $\psi^{(\chi)}_\Vert$ yields
\begin{equation}
\zeta=\dfrac{\Bigl\langle\Gamma_1^{(\chi)}\Pi_\chi\psi^{(\chi)}_\Vert,\psi^{(\chi)}_\Vert\Bigr\rangle}{\Bigl\langle\Gamma_1^{(\chi)}\Bigl(A_\chi^{(0)}\bigr)^{-1}\Pi_\chi\psi^{(\chi)}_\Vert,\psi^{(\chi)}_\Vert\Bigr\rangle}
\label{zeta}
\end{equation}
The function $\Pi_\chi\psi^{(\chi)}_\Vert$ solves
\[
-\biggl(\dfrac{d}{dy}+{\rm i}\chi\biggr)^2u=0,\qquad u(0)=u(1)=\dfrac{1}{\sqrt{2}},\quad u(l)=-\dfrac{\xi^{(\chi)}}{\sqrt{2}\vert\chi^{(\chi)}\vert}.
\]
By a direct calculation, we obtain 
\begin{equation*}
\Pi_\chi\psi^{(\chi)}_\Vert(y)=\dfrac{{\rm e}^{-{\rm i}\chi y}}{\sqrt{2}}\left\{\begin{array}{ll}
-\dfrac{1}{l}\Biggl({\rm e}^{{\rm i}\chi l}\dfrac{\xi^{(\chi)}}{\vert\xi^{(\chi)}\vert}+1\Biggr)y+1,\qquad y\in(0,l),\\[0.9em]
-\dfrac{1}{l-1}\Biggl\{\biggl({\rm e}^{{\rm i}\chi l}\dfrac{\xi^{(\chi)}}{\vert\xi^{(\chi)}\vert}+{\rm e}^{{\rm i}\chi}\biggr)y+{\rm e}^{{\rm i}\chi l}\dfrac{\xi^{(\chi)}}{\vert\xi^{(\chi)}\vert}+l{\rm e}^{{\rm i}\chi}\Biggr\},\qquad y\in(l,1).
\end{array}\right.
\end{equation*}
Denote by $u_-,$ $u_+$ the restrictions $u\vert_{(0,l)},$ $u\vert_{(l,1)},$ respectively. The Neumann trace operator is given by 
\begin{equation}
\Gamma_1^{(\chi)}u=\left(\begin{array}{c}a_-\biggl(\dfrac{d}{dy}+{\rm i}\chi\biggr)u_-(0)-a_2\biggl(\dfrac{d}{dy}+{\rm i}\chi\biggr)u_+(1)\\[0.8em]
a_2\biggl(\dfrac{d}{dy}+{\rm i}\chi\biggr)u_+(l)-a_-\biggl(\dfrac{d}{dy}+{\rm i}\chi\biggr)u_-(l)
\end{array}\right).
\label{Neumann_trace}
\end{equation}
Setting $u=\Pi_\chi\psi^{(\chi)}_\Vert$ in (\ref{Neumann_trace}), we obtain
\begin{align}
\Bigl\langle\Gamma_1^{(\chi)}\Pi_\chi\psi^{(\chi)}_\Vert,\psi^{(\chi)}_\Vert\Bigr\rangle&=-\dfrac{a_-}{l}
\Biggl(
1+\Re\biggl(\dfrac{\xi^{(\chi)}}{\vert\xi^{(\chi)}\vert}
{\rm e}^{{\rm i}\chi l}\biggr)
\Biggr)-
\dfrac{a_2}{1-l}
\Biggl(1+\Re\biggl(\dfrac{\xi^{(\chi)}}{\vert\xi^{(\chi)}\vert}{\rm e}^{{\rm i}\chi (l-1)}\biggr)\Biggr)=-D+\bigl\vert\xi^{(\chi)}\bigr\vert\label{mu_first_line}\\[0.4em]
&=\sqrt{\dfrac{a_-^2}{l^2}+\dfrac{a_2^2}{(1-l)^2}+\dfrac{2a_-a_2}{l(1-l)}\cos\chi}-\biggl(\dfrac{a_-}{l}+\dfrac{a_2}{1-l}\biggr).\nonumber
\end{align}

\begin{remark}
As, by definition, $\Lambda_\chi=M_\chi(0)=\Gamma_1^{(\chi)}\Pi_\chi,$ and $\psi^{(\chi)}_{\Vert}$ is an eigenvector of $\Lambda_\chi$ corresponding to the eigenvalue $\mu_{\vert}^{(\chi)},$ we have 
\[
\Bigl\langle\Gamma_1^{(\chi)}\Pi_\chi\psi^{(\chi)}_\Vert,\psi^{(\chi)}_\Vert\Bigr\rangle=\Bigl\langle\mu_{\Vert}^{(\chi)}\psi^{(\chi)}_\Vert,\psi^{(\chi)}_\Vert\Bigr\rangle=\mu_{\Vert}^{(\chi)}=\bigl\vert\xi^{(\chi)}\bigr\vert-D,
\]
which coincides with (\ref{mu_first_line}).
\end{remark}

Proceeding to the denominator of (\ref{zeta}), note first that the function $W:=\bigl(A_\chi^{(0)}\bigr)^{-1}\Pi_\chi\psi^{(\chi)}_\Vert=W_1\oplus W_2$ solves
\[
-a_{1,2}\biggl(\dfrac{d}{dy}+{\rm i}\chi\biggr)^2W_{1,2}=\Pi_\chi\psi^{(\chi)}_\Vert=:f,\qquad W_1(0)=W_1(l)=W_2(0)=W_2(1)=0.
\]
Consider the functions 
\[
h(y):=\int_0^yf(s)\int_s^ya^{-1},\ \  y\in(0,1),\qquad
g_1(y)=\left\{\begin{array}{ll}1-\dfrac{y}{l},\ \  &y\in(0,1),\\[0.6em]{\rm e}^{{\rm i}\chi}\dfrac{l-y}{l-1},\ \  &y\in(l,1),\end{array}\right.\quad g_2(y)=\left\{\begin{array}{ll}\dfrac{y}{l},\ \  &y\in(0,1),\\[0.6em]\dfrac{y-1}{l-1},\ \  &y\in(l,1).\end{array}\right.
\]

A direct calculation yields
\[
W(y)={\rm e}^{-{\rm i}\chi y}\Bigl\{
\bigl({\rm e}^{{\rm i}\chi}-1\bigr)^{-1}h(1)\bigl(1-g_1(y)-g_2(y)\bigr)+h(y)-h(l)g_2(y)
\Bigr\},\quad y\in(0,1).
\]
Setting $u=W$ in (\ref{Neumann_trace}), we obtain
\[
\Gamma_1^{(\chi)}W=\left(\begin{array}{c}\Biggl(\dfrac{a_-}{l}+\dfrac{a_2}{1-l}{\rm e}^{-{\rm i}\chi}\Biggr)h(l)+{\rm e}^{-{\rm i}\chi}\Biggl(\dfrac{a_2}{1-l}h(1)-\int_0^1f\Biggr)\\[0.6em]
-{\rm e}^{-{\rm i}\chi l}\Biggl\{\biggl(\dfrac{a_-}{l}+\dfrac{a_2}{1-l}\biggr)h(l)+\dfrac{a_2}{1-l}h(1)\Biggr\}
\end{array}\right)
\]
Finally, the inner product of the latter vector with $\psi^{(\chi)}_\Vert$ is 
\begin{equation}
\begin{aligned}
\Bigl\langle\Gamma_1^{(\chi)}\Bigl(A_\chi^{(0)}\bigr)^{-1}\Pi_\chi\psi^{(\chi)}_\Vert,\psi^{(\chi)}_\Vert\Bigr\rangle&=\dfrac{1}{6}\Biggl\{2+\Re\Biggl(
\bigl(l{\rm e}^{{\rm i}\chi l}+(1-l){\rm e}^{{\rm i}\chi(l-1)}\bigr)
\dfrac{\xi^{(\chi)}}{\vert\xi^{(\chi)}\vert}\Biggr)\Biggr\}\\[0.4em]
&=\dfrac{1}{6}\Biggl\{2+\dfrac{1}{\bigl\vert\xi^{(\chi)}\bigr\vert}\biggl(a_-+a_2+\biggl(\dfrac{1-l}{l}a_-+\dfrac{l}{1-l}a_2\biggr)\cos\chi\biggr)\Biggr\}.
\end{aligned}
\label{denominator}
\end{equation}
Combining (\ref{zeta}), (\ref{mu_first_line}), and (\ref{denominator}) yields the value in the statement of the proposition.
\end{proof}

\section{Hyperbolic evolution for the prototype operator with rapidly oscillating coefficients}
\label{hyperbolic_sec}

Here we combine the estimates obtained in the preceding section with the representation (\ref{hyperbolic_formula}) to study the behaviour of solutions to the hyperbolic evolution problem for the operators $A_\varepsilon,$ see (\ref{Aeps_expr}). We focus on the case $u_{\rm init}=0,$ $f=0$, thus considering the operator $A_\varepsilon^{-1/2}\sin(A_\varepsilon^{1/2}t)$ that enters the second term in (\ref{hyperbolic_formula}).

\subsection{Convergence estimate for the Cauchy problem}
\label{first_order_estimates}

Consider $\alpha\in(0,2)$ as above and suppose first that $\vert\chi|\le C_3\varepsilon^{(\alpha+2)/4}$ for some ($\chi$-independent) $C_3>0,$ which we choose below. 
By the first part of Theorem \ref{approximation_thm}, there exists a circle 
of radius $2C_1C_3^4\varepsilon^{-2}\chi^4\le2C_1C_3^4\varepsilon^\alpha\le 2C_1C_3^4$ (where $C_1$ is provided by (\ref{dist_est})) whose interior contains $\varepsilon^{-2}A_\chi^{\rm hom}$ as well as the lowest eigenvalue of the operator $\varepsilon^{-2}(A_\chi)_{0,I}.$ In particular, there exists a circle $\gamma$ of radius $R:=2\max\{C_1C_3^4,1\}$   whose interior contains $\varepsilon^{-2}A_\chi^{\rm hom}$ as well as the lowest eigenvalue of the operator $\varepsilon^{-2}(A_\chi)_{0,I}$ and additionally
\[
{\rm dist}\Bigl(z, \sigma\bigl(\varepsilon^{-2}(A_\chi)_{0,I}\bigr)\cup\sigma\bigl(\varepsilon^{-2}A_\chi^{\rm hom}\bigr)\Bigr)\ge 1,\quad z\in\gamma.
\]
Denote by $P$ the projection onto the corresponding eigenvector of $(A_\chi)_{0,I}.$  Using the Dunford-Schwartz calculus \cite{dunford_schwartz3}, we have 
\begin{equation}
\begin{aligned}		&\varepsilon\bigl((A_\chi)_{0,I}\bigr)^{-1/2}\sin\Bigl(\varepsilon^{-1}\bigl((A_\chi)_{0,I}\bigr)^{1/2}t\Bigr)
=\varepsilon P\bigl((A_\chi)_{0,I}\bigr)^{-1/2}\sin\Bigl(\varepsilon^{-1}\bigl((A_\chi)_{0,I}\bigr)^{1/2}t\Bigr)P\\[0.3em]
&\hspace{5cm}+\varepsilon(I-P)\bigl((A_\chi)_{0,I}\bigr)^{-1/2}\sin\Bigl(\varepsilon^{-1}\bigl((A_\chi)_{0,I}\bigr)^{1/2}t\Bigr)(1-P)\\[0.3em]
&\hspace{0cm}=-\frac{1}{2\pi{\rm i}}\ointctrclockwise_{\gamma}\dfrac{\sin\bigl(\sqrt{z}t\bigr)}{\sqrt{z}}\Bigl(\varepsilon^{-2}(A_\chi)_{0,I}-zI\Bigr)^{-1}dz
+\varepsilon(I-P)\bigl((A_\chi)_{0,I}\bigr)^{-1/2}\sin\Bigl(\varepsilon^{-1}\bigl((A_\chi)_{0,I}\bigr)^{1/2}t\Bigr)(1-P).
\end{aligned}
\label{decomposition}
\end{equation}

Next, note that by virtue of Proposition \ref{Ahomprop}, for $z\in\gamma$ one has $|z|\le C_4\varepsilon^{-2}\chi^2\le C_4C_3^2\varepsilon^{(\alpha-2)/2}$ for some $C_4>0.$ We choose $C_3=cC_4^{-1/2}$ so that $C_4C_3^2=c.$ Using the second part of Theorem 
\ref{approximation_thm} then yields
\begin{equation}
\begin{aligned}
&\Biggl\Vert\ointctrclockwise_{\gamma}\dfrac{\sin\bigl(\sqrt{z}t\bigr)}{\sqrt{z}}\biggl\{\Bigl(\varepsilon^{-2}(A_\chi)_{0,I}-zI\Bigr)^{-1}-\bigl(\varepsilon^{-2}A_\chi^{\rm hom}-zI\bigr)^{-1}\Theta_\chi\biggr\}dz\Biggr\Vert_{L^2(0,1)\to L^2(0,1)}
\\[0.3em]
&\hspace{4cm}
\le C_2\varepsilon^\alpha\ointctrclockwise_{\gamma}\biggl\vert\dfrac{\sin\bigl(\sqrt{z}t\bigr)}{\sqrt{z}}\biggr\vert dz\le 2\pi\tilde{R}C_2\varepsilon^\alpha\min\Biggl\{t,\sqrt{\frac{2}{C_4}}\frac{\varepsilon}{|\chi|}\Biggr\}
\end{aligned}
\label{contour}
\end{equation}
Furthermore, one clearly has 
\begin{equation}
\Bigl\Vert(I-P)\bigl((A_\chi)_{0,I}\bigr)^{-1/2}\sin\Bigl(\varepsilon^{-1}\bigl((A_\chi)_{0,I}\bigr)^{1/2}t\Bigr)(1-P)\Bigr\Vert_{L^2(0,1)\to L^2(0,1)}\le 1.
\label{remainder_operator}
\end{equation}
It follows from (\ref{decomposition}), (\ref{contour}), and (\ref{remainder_operator}) that
\begin{equation}
\begin{aligned}
&
\Bigl\Vert\varepsilon\bigl((A_\chi)_{0,I}\bigr)^{-1/2}\sin\Bigl(\varepsilon^{-1}\bigl((A_\chi)_{0,I}\bigr)^{1/2}t\Bigr)
-\varepsilon(A_\chi^{\rm hom})^{-1/2}\sin\Bigl(\varepsilon^{-1}(A_\chi^{\rm hom})^{1/2}t\Bigr)\Theta_\chi\Bigr\Vert_{L^2(0,1)\to 
L^2(0,1)}
\\[0.3em]
&\hspace{4cm}
\le\varepsilon+2\pi\tilde{R}C_2\varepsilon^\alpha\min\Biggl\{t,\sqrt{\frac{2}{C_4}}\frac{\varepsilon}{|\chi|}\Biggr\}
\end{aligned}
\label{small_chi}
\end{equation}

Finally, if $|\chi|>C_3\varepsilon^{(\alpha+2)/4}$ then for some $C_5>0$ one has
\begin{equation}
\max\biggl\{\Bigl\Vert \bigl((A_\chi)_{0,I}\bigr)^{-1/2}\Bigr\Vert_{L^2\to L^2}, \Bigl\Vert (A_\chi^{\rm hom})^{-1/2}\Bigr\Vert_{L^2\to L^2}\biggr\}\le C_5|\chi|^{-1}\le C_5C_3^{-1}\varepsilon^{-(\alpha+2)/4}.
\label{inverse_root_est}
\end{equation}
(Note that $C_5$ and $C_4$ can be replaced by a single constant at the expense of possibly increasing $C_4.$)
Combining (\ref{small_chi}) and (\ref{inverse_root_est}) yields 
\begin{equation*}
\Bigl\Vert\varepsilon\bigl((A_\chi)_{0,I}\bigr)^{-1/2}\sin\Bigl(\varepsilon^{-1}\bigl((A_\chi)_{0,I}\bigr)^{1/2}t\Bigr)
-\varepsilon(A_\chi^{\rm hom})^{-1/2}\sin\Bigl(\varepsilon^{-1}(A_\chi^{\rm hom})^{1/2}t\Bigr)\Theta_\chi\Bigr\Vert_{L^2(0,1)\to L^2(0,1)}
\le E^{(1)}(\varepsilon,\chi, t),
\end{equation*}
where 
\begin{equation}
E^{(1)}(\varepsilon,\chi, t):=\left\{\begin{array}{ll}\varepsilon+\tilde{R}C_2\varepsilon^\alpha\min\Biggl\{t,\sqrt{\dfrac{2}{C_4}}\dfrac{\varepsilon}{|\chi|}\Biggr\}
\qquad &{\rm if}\ \  |\chi|\le\varepsilon^{(\alpha+2)/4},\\[0.6em]
2C_5C_3^{-1}\varepsilon^{1-(\alpha+2)/4}\qquad &{\rm if}\ \   \varepsilon^{(\alpha+2)/4}\le|\chi|\le\pi.\end{array}\right. 
\label{E1}
\end{equation}
For $\alpha\in(1,2),$ 
the second-order approximation leads to a convergence estimate (as $\varepsilon\to0$) up to the times of order $\varepsilon^{-\alpha+\sigma},$ 
for all $\sigma>0.$ The corresponding error (uniform with respect to $\chi\in Y'$) is obtained from (\ref{E2}) as being of the order $O(\varepsilon^{\min\{1-(\alpha+2)/4, \sigma\}})=O(\varepsilon^{\min\{(2-\alpha)/4, \sigma\}}).$

\subsection{Second-order matrix approximation}\label{sect:boundary_triple_second_order}


We follow the approach of the proof of Theorem \ref{approximation_thm} and expand $\varepsilon^{-2}M_\chi(\varepsilon^2z)$ to the term of order $O(|z|^2\varepsilon^2).$ In particular, using the representation (\ref{M_expansion}), we write
\begin{align*}
\varepsilon^{-2}M_\chi(\varepsilon^2z)
&=\varepsilon^{-2}\Lambda_\chi+z\Pi_\chi^*\bigl(I-z\varepsilon^2(A_\chi^{(0)})^{-1}\bigr)^{-1}\Pi_\chi\\[0.3em]
&=\varepsilon^{-2}P_\chi\Lambda_\chi P_\chi+zP_\chi\Pi_\chi^*\Pi_\chi P_\chi+z^2\varepsilon^2P_\chi\Pi_\chi^*(A_\chi^{(0)})^{-1}\Pi_\chi P_\chi+z^3\varepsilon^4P_\chi\Pi_\chi^*(A_\chi^{(0)})^{-2}\bigl(I-z\varepsilon^2(A_\chi^{(0)})^{-1}\bigr)^{-1}\Pi_\chi P_\chi\\[0.3em]
&\hspace{2cm}
+\varepsilon^{-2}(1-P_\chi)M_\chi(\varepsilon^2)(1-P_\chi),
\end{align*}
and therefore 
\begin{equation}
\begin{aligned}
\varepsilon^2M_\chi(\varepsilon^2z)^{-1}&=\varepsilon^2P_\chi M_\chi(\varepsilon^2z)^{-1}P_\chi+
\varepsilon^2(I-P_\chi) M_\chi(\varepsilon^2z)^{-1}(I-P_\chi)\\[0.3em]
&=\bigl(\varepsilon^{-2}P_\chi\Lambda_\chi P_\chi+zP_\chi\Pi_\chi^*\Pi_\chi P_\chi+z^2\varepsilon^2P_\chi\Pi_\chi^*(A_\chi^{(0)})^{-1}\Pi_\chi P_\chi\bigr)^{-1}+O\bigl((|z|^3\varepsilon^2+1)\varepsilon^2\bigr).
\end{aligned}
\label{epsM_expansion1}
\end{equation}

Denote $\hat{A}_\chi^{(0)}:=\bigl(\bigl({A}_\chi^{(0)}\bigr)^{-1}\bigr\vert_{\Pi_\chi\hat{\mathcal E}_\chi}\bigr)^{-1}.$ By virtue of the the representation (\ref{Sform}) and Krein's formula (\ref{Krein_formula}), where we set $\alpha=0,$ $\beta=I,$ we now have
\begin{align}
\bigl(\varepsilon^{-2}(A_{\chi})_{0,I}-zI\bigr)^{-1}&=\bigl(\varepsilon^{-2}A_{\chi}^{(0)}-zI\bigr)^{-1}-\varepsilon^2S_\chi(\varepsilon^2z)M_\chi(\varepsilon^2z)^{-1}S_\chi(\varepsilon^2\overline{z})^*\nonumber\\[0.3em]
&=O(\varepsilon^2)-\bigl(\Pi_\chi+O(\varepsilon^2z)\bigr)\Bigl\{\bigl(\varepsilon^{-2}P_\chi\Lambda_\chi P_\chi+zP_\chi\Pi_\chi^*\Pi_\chi P_\chi+z^2\varepsilon^2P_\chi\Pi_\chi^*(A_\chi^{(0)})^{-1}\Pi_\chi P_\chi\bigr)^{-1}\nonumber\\[0.3em]
&\hspace{6cm}+O\bigl((|z|^3\varepsilon^2+1)\varepsilon^2\bigr)\Bigr\}\bigl(\Pi_\chi^*+O(\varepsilon^2|z|)\bigr)\nonumber\\[0.3em]
&=\hat{\Pi}_\chi(\hat{\Pi}_\chi)^{-1}\Bigl(
\varepsilon^{-2}A_\chi^{\rm hom}-z-z^2\varepsilon^2\bigl(\hat{A}_\chi^{(0)}\bigr)^{-1}\Bigr)^{-1}(\hat{\Pi}_\chi^*)^{-1}\hat{\Pi}_\chi^*+O\bigl((|z|^3\varepsilon^2+|z|+1)\varepsilon^2\bigr)\nonumber
\\[0.3em]
&=\Bigl(\varepsilon^{-2}A_\chi^{\rm hom}-z-z^2\varepsilon^2\bigl(\hat{A}_\chi^{(0)}\bigr)^{-1}\Bigr)^{-1}\bigr\vert_{\Pi_\chi\hat{\mathcal E}_\chi}+O\bigl((|z|^3\varepsilon^2+|z|+1)\varepsilon^2\bigr),\label{higher_order_exp}
\end{align}
where, similarly to (\ref{order1_app}), the first terms is extended by linearity to $L^2(0,1)$ so that the extension vanishes on the orthogonal complement of $\Pi_\chi\hat{\mathcal E}_\chi.$

We next determine a Jacobi matrix
\[
J=\left(\begin{array}{cc}q_0 &b_1\\
b_1&q_1\end{array}\right),\qquad q_0, q_1,  b_1\in{\mathbb R},
\]
and $c>0$ such that
\begin{equation}
cz-q_0-\dfrac{b_1^2}{cz-q_1}=-\varepsilon^{-2}A^{\rm hom}_\chi+z+z^2\varepsilon^2\bigl(\hat{A}_\chi^{(0)}\bigr)^{-1}+O\bigl(|z|^3\varepsilon^4\bigr),
\label{first_approximation}
\end{equation}
Noting that the resolvent equation 
\[
(J-cz)\left(\begin{array}{c}x_1\\x_2\end{array}\right)=\left(\begin{array}{c}f\\0\end{array}\right)
\]
is equivalent to
\[
-\biggl(cz-q_0-\dfrac{b_1^2}{cz-q_1}\biggr)x_1=f,
\]
we infer from (\ref{higher_order_exp}) that the operator 
$\bigl(\hat\Pi^*_\chi\bigr)^{-1}M(\varepsilon^2 z)^{-1}\hat{\Pi}_\chi$ (cf. (\ref{epsM_expansion}), (\ref{epsM_expansion1}))
is order $O(|z|^3\varepsilon^4)$ close to the generalised resolvent
\begin{equation}
{\mathcal R}_{\chi, \varepsilon}^{\rm app}:=\mathcal{I}_1^*\left\{\left(\begin{array}{cc}\varepsilon^{-2}A_\chi^{\rm hom} & 0\\[0.2em]0&0\end{array}\right)+\frac{1}{4}\varepsilon^{-2}\hat{A}_\chi^{(0)}\left(\begin{array}{cc}1 & \pm1\\[0.2em]\pm1 &1\end{array}\right)-\frac{z}{2}\right\}^{-1}\!\!\!\mathcal{I}_1
\label{gen_res}
\end{equation}
where 
the operator ${\mathcal I}_1$ maps $x_1\in{\mathbb C}$ to the vector $(x_1, 0)^\top\in{\mathbb C}^2,$ so that $\mathcal{I}_1:(x_1,x_2)\mapsto x_1.$

Indeed, expanding the left-hand side of (\ref{first_approximation}) in powers of $z$ and comparing the coefficients on either side in front of similar powers of $z$ yields a system of equations for the entries of the matrix $J:$
\begin{equation}
\dfrac{b_1^2}{q_1}-q_0=-\varepsilon^{-2}A^{\rm hom}_\chi,\qquad c\biggl(1+\dfrac{b_1^2}{q_1^2}\biggr)=1,\qquad c^2\dfrac{b_1^2}{q_1^3}=\varepsilon^2\bigl(\hat{A}_\chi^{(0)}\bigr)^{-1}.
\label{recurrence_system}
\end{equation}
The system (\ref{recurrence_system}) has infinitely many solutions $(c, q_0, q_1, b_1).$ One convenient option is to set 
\[
c=\dfrac{1}{2},\qquad q_0=\frac{1}{4}\varepsilon^{-2}\hat{A}_\chi^{(0)}+\varepsilon^{-2}A_\chi^{\rm hom},\qquad q_1=\frac{1}{4}\varepsilon^{-2}\hat{A}_\chi^{(0)},\qquad b_1^2=\dfrac{1}{16}\varepsilon^{-4}\bigl(\hat{A}_\chi^{(0)}\bigr)^2.
\]
The resolvent appearing between the projection operators in (\ref{gen_res}) is the resolvent of a self-adjoint operator on ${\mathbb C}^2$ (i.e. a symmetric matrix in the present setting). The latter can be viewed as a dilation of the space ${\mathbb C}$ in which the resolvent $(A_\chi^{\rm hom}-z)^{-1}$  of the first-order approximation acts. The sign choice in the off-diagonal entries in (\ref{gen_res}) corresponds to the transformation $(x_1, x_2)\mapsto (x_1, -x_2)$ of the dilation space ${\mathbb C}$ yielding a unitarily equivalent dilation operator. In what follows we choose the sign ``+" in (\ref{gen_res}). 

Denote 
\[
A^{{\rm hom},(2)}_\chi:=\left(\begin{array}{cc}A_\chi^{\rm hom} & 0\\[0.2em]0&0\end{array}\right)+\frac{1}{4}\hat{A}_\chi^{(0)}\left(\begin{array}{cc}1 & 1\\[0.2em]1 &1\end{array}\right),
\]
so that ${\mathcal R}_{\chi,\varepsilon}^{\rm app}={\mathcal I}_1^*\bigl(\varepsilon^{-2}A^{{\rm hom},(2)}_\chi-z/2\bigr)^{-1}{\mathcal I}_1,$ see (\ref{gen_res}) The eigenvalues of the matrix $J=\varepsilon^{-2}A^{{\rm hom},(2)}_\chi$
are given by
\begin{equation*}
\frac{z^{\pm}_\chi}{2}=\dfrac{1}{2\varepsilon^2}\biggl(A_\chi^{\rm hom}+\hat{A}_\chi^{(0)}/2\pm\sqrt{\bigl(A_\chi^{\rm hom}\bigr)^2+\bigl(\hat{A}_\chi^{(0)}\bigr)^2/4}\biggr).
\end{equation*}
For small $|\chi|,$ the two eigenvalues are estimated as follows:
\begin{equation*}
\frac{z_\chi^-}{2}=\frac{1}{2\varepsilon^2}\Bigl\{A_\chi^{\rm hom}-\bigl(\hat{A}_\chi^{(0)}\bigr)^{-1}\bigl(A_\chi^{\rm hom}\bigr)^2+O\Bigl(\bigl(A_\chi^{\rm hom}\bigr)^3\Bigr)\Bigr\},\qquad \frac{z_\chi^+}{2}=\frac{1}{2\varepsilon^2}\Bigl\{\hat{A}_\chi^{(0)}+A_\chi^{\rm hom}+O\Bigl(\bigl(A_\chi^{\rm hom}\bigr)^2\Bigr)\Bigr\}.
\end{equation*}
The eigenvalue $z_\chi^-/2$ behaves like $(2\varepsilon^2)^{-1}A_\chi^{\rm hom}$ for small $\varepsilon,\chi$ and, as we shall see, the contribution of the corresponding spectral projection to the asymptotics of the resolvent $(\varepsilon^{-2}(A_{\chi})_{0,I}-I)^{-1}$ corresponds to a quasimomentum range overlapping with that 
of the first-order approximation discussed in the preceding section. 
The contribution of the spectral projection for the eigenvalue $z_\chi^+/2$ to the asymptotics of the resolvent $(\varepsilon^{-2}(A_{\chi})_{0,I}-I)^{-1}$ is of order $O(\varepsilon^{2})$ and can therefore be included in the overall approximation error. 

Recalling the orthogonal projection $\Theta_\chi:L^2(0,1)\rightarrow {\rm Range}(\hat{\Pi}_\chi)$ in Theorem \ref{approximation_thm}, we have thus proved the following analogue of the second part of Theorem \ref{approximation_thm}.

\begin{theorem} 
\label{approximation_thm2}
For every $\alpha\in(0,4),$ there exist $\tilde{c}, \tilde{C}_2>0$ such that
for all $\chi\in Y'$ and $z\in{\mathbb C}$ satisfying ${\rm dist}\Bigl(z, \sigma\bigl(\varepsilon^{-2}(A_\chi)_{0,I}\bigr)\cup
\bigl\{z_\chi^-\bigr\}\Bigr)\ge\varepsilon^{\min\{0,2(\alpha-1)/3\}},$
$\vert z\vert\le \tilde{c}\varepsilon^{(\alpha-4)/3},$ one has
\begin{equation}
\begin{aligned}
\Bigl\Vert
\bigl(\varepsilon^{-2}(A_{\chi})_{0,I}-zI\bigr)^{-1}
-{\mathcal R}^{\rm app}_{\chi,\varepsilon}(z)\Theta_\chi
\Bigr\Vert_{L^2(0,1)\to L^2(0,1)}&= O\Bigl((|z|^3\varepsilon^4+|z|\varepsilon^2)\varepsilon^{\max\{0,4(1-\alpha)/3\}}+\varepsilon^2\Bigr)\\[0.3em]
&\le \tilde{C}_2\varepsilon^{\min\{(\alpha+2)/3, (4-\alpha)/3\}},
\end{aligned}
\label{norm_est2}
\end{equation}
where for each $z$, the operator ${\mathcal R}^{\rm app}_{\chi,\varepsilon}(z)$ is viewed as the multiplication by a constant on the range of $\hat{\Pi}_\chi.$
\end{theorem}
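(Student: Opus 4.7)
The plan is to follow the strategy of Theorem \ref{approximation_thm} but push the expansion of $M_\chi(\varepsilon^2 z)$ one step further, so that the \emph{quadratic} correction in $z$ to the leading-order germ is retained. Then, the resulting scalar ``three-term" approximation of the compressed resolvent will be re-expressed, via a Jacobi-matrix trick, as the compression of the resolvent of a $2\times2$ self-adjoint dilation, which is exactly $\mathcal{R}^{\rm app}_{\chi,\varepsilon}(z)$.

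First, I would use the Neumann expansion of $M_\chi(\varepsilon^2 z)$ (i.e.\ formula \eqref{M_expansion} with $N=1$) to obtain \eqref{epsM_expansion1}:
\[
\varepsilon^2 M_\chi(\varepsilon^2 z)^{-1}=\Bigl(\varepsilon^{-2}P_\chi\Lambda_\chi P_\chi+zP_\chi\Pi_\chi^*\Pi_\chi P_\chi+z^2\varepsilon^2P_\chi\Pi_\chi^*\bigl(A_\chi^{(0)}\bigr)^{-1}\Pi_\chi P_\chi\Bigr)^{-1}+O\bigl((|z|^3\varepsilon^2+1)\varepsilon^2\bigr).
\]
Substituting this, together with \eqref{Sform}, into Krein's formula \eqref{Krein_formula} with $\alpha=0, \beta=I,$ as in the derivation of \eqref{order1_app}, yields the expansion \eqref{higher_order_exp}, i.e.\ that $(\varepsilon^{-2}(A_\chi)_{0,I}-z)^{-1}$ equals the scalar quantity
$(\varepsilon^{-2}A_\chi^{\rm hom}-z-z^2\varepsilon^2(\hat{A}_\chi^{(0)})^{-1})^{-1}$ (extended by zero to the orthogonal complement of $\Pi_\chi\hat{\mathcal E}_\chi$) up to a remainder of size $O\bigl((|z|^3\varepsilon^2+|z|+1)\varepsilon^2\bigr).$

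Next, I would construct the Jacobi matrix dilation. Solving the system \eqref{recurrence_system} yields constants $c,q_0,q_1,b_1$ such that the two-by-two matrix $J=\varepsilon^{-2}A_\chi^{{\rm hom},(2)}$ satisfies
\[
\mathcal{I}_1^{*}\bigl(J-(z/2)I\bigr)^{-1}\mathcal{I}_1=\Bigl(\varepsilon^{-2}A_\chi^{\rm hom}-z-z^2\varepsilon^2\bigl(\hat{A}_\chi^{(0)}\bigr)^{-1}\Bigr)^{-1}+O\bigl(|z|^3\varepsilon^4\bigr),
\]
where the leftmost expression is exactly $\mathcal{R}^{\rm app}_{\chi,\varepsilon}(z)$. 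Combined with the previous step, this yields a bound of type
\[
\Bigl\|(\varepsilon^{-2}(A_\chi)_{0,I}-zI)^{-1}-\mathcal{R}^{\rm app}_{\chi,\varepsilon}(z)\Theta_\chi\Bigr\|=O\bigl(|z|^3\varepsilon^4+|z|\varepsilon^2+\varepsilon^2\bigr),
\]
at least away from the spectra in a uniform sense. The only care needed here is that $J$ is self-adjoint on $\mathbb C^2,$ so the compression $\mathcal{R}^{\rm app}_{\chi,\varepsilon}(z)$ makes sense on $\rho(J/2)=\mathbb C\setminus\{z_\chi^-/2, z_\chi^+/2\}$; the diagonalisation of $J$ performed in the text identifies $z_\chi^-$ as the ``physical" eigenvalue contributing to the first-order behaviour, while the $z_\chi^+$-branch contributes only $O(\varepsilon^2)$.

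The main obstacle, and the step that dictates the final exponent $\min\{(\alpha+2)/3,(4-\alpha)/3\},$ is the bookkeeping of the distance condition ${\rm dist}\bigl(z, \sigma(\varepsilon^{-2}(A_\chi)_{0,I})\cup\{z_\chi^-\}\bigr)\ge\varepsilon^{\min\{0,2(\alpha-1)/3\}}.$ When $\alpha<1,$ the bound on $|z|$ is large enough that $z$ may approach $z_\chi^-$, and the error estimate from the Neumann-type argument above needs to be multiplied by $\|\cdot(A-z)^{-1}\|$-type factors which are $O\bigl(\varepsilon^{-2(\alpha-1)/3}\bigr)$ for both the true and approximating resolvents, producing the extra factor $\varepsilon^{\max\{0,4(1-\alpha)/3\}}$ indicated in \eqref{norm_est2}. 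The final step is then elementary: invoke the constraint $|z|\le\tilde{c}\varepsilon^{(\alpha-4)/3}$ to get $|z|^3\varepsilon^4\le\tilde c^3\varepsilon^\alpha$ and $|z|\varepsilon^2\le\tilde c\,\varepsilon^{(\alpha+2)/3},$ and multiply through by the singularity factor to arrive at the desired $\varepsilon^{\min\{(\alpha+2)/3,(4-\alpha)/3\}}$ upper bound, by a case analysis on whether $\alpha\lessgtr 1.$
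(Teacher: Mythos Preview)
Your proposal is correct and follows essentially the same approach as the paper: the argument preceding the theorem statement in Section~\ref{sect:boundary_triple_second_order} \emph{is} the proof, and you have reproduced its steps faithfully --- the higher-order Neumann expansion \eqref{epsM_expansion1}, substitution into Krein's formula to obtain \eqref{higher_order_exp}, the Jacobi dilation via \eqref{recurrence_system}--\eqref{gen_res}, and the observation that the $z_\chi^+$-branch contributes only $O(\varepsilon^2)$. Your final paragraph on the bookkeeping of the distance condition and the $\alpha\lessgtr 1$ case analysis is in fact more explicit than what the paper writes out.
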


\subsection{Second-order error estimate for the Cauchy problem}

In what follows, $\alpha\in(0,4).$ Similarly to the approach of Section \ref{first_order_estimates}, suppose first that $\vert\chi|\le\tilde{C}_3\varepsilon^{(\alpha+2)/6}$ for some $\tilde{C}_3,$ which we choose in what follows, and 
note that 
$\varepsilon^{-2}\chi^4\le\varepsilon^{2(\alpha-1)/3}.$
 Therefore, there exists a circle $\tilde{\gamma}$ of radius $\tilde{R}\varepsilon^{\min\{0,2(\alpha-1)/3\}},$   
$\tilde{R}:=2\max\{C_1\tilde{C}_3^4,1\}$ (where $C_1$ is still provided by (\ref{dist_est})) whose interior contains $z_\chi^-$ as well as the lowest eigenvalue of $\varepsilon^{-2}(A_\chi)_{0,I},$ and in addition one has 
\[
{\rm dist}\Bigl(z, \sigma\bigl(\varepsilon^{-2}(A_\chi)_{0,I}\bigr)\cup\bigl\{z_\chi^-\bigr\}\bigr)\Bigr)\ge \varepsilon^{\min\{0,2(\alpha-1)/3\}},\quad z\in\tilde{\gamma}.
\]

There exists $\tilde{C}_4>0$ such that for $z\in\tilde{\gamma}$ one has $(\tilde{C}_4/2)\varepsilon^{-2}\chi^2\le|z|\le \tilde{C}_4\varepsilon^{-2}\chi^2\le \tilde{C}_4\tilde{C}_3^2\varepsilon^{(\alpha-4)/3}.$ Choosing $\tilde{C}_3=\tilde{c}(\tilde{C}_4)^{-1/2},$ we then have $|z|\le\tilde{c}\varepsilon^{(\alpha-4)/3}$ for all $z\in\tilde{\gamma}.$ Invoking Theorem \ref{approximation_thm2}, we obtain
\begin{equation}
\begin{aligned}
&\Biggl\Vert\ointctrclockwise_{\tilde{\gamma}}\dfrac{\sin\bigl(\sqrt{z}t\bigr)}{\sqrt{z}}\biggl\{\Bigl(\varepsilon^{-2}(A_\chi)_{0,I}-zI\Bigr)^{-1}-{\mathcal R}^{\rm app}_{\chi,\varepsilon}(z)\Theta_\chi\biggr\}dz\Biggr\Vert_{L^2(0,1)\to L^2(0,1)}\\[0.3em]
&\hspace{1cm}\le \tilde{C}_2\varepsilon^{\min\{(\alpha+2)/3, (4-\alpha)/3\}}\ointctrclockwise_{\tilde{\gamma}}\biggl\vert\dfrac{\sin\bigl(\sqrt{z}t\bigr)}{\sqrt{z}}\biggr\vert dz\le 2\pi\tilde{R}\tilde{C}_2\varepsilon^{(\alpha+2)/3}\min\Biggl\{t,\sqrt{\frac{2}{\tilde{C}_4}}\frac{\varepsilon}{|\chi|}\Biggr\}.
\end{aligned}
\label{contour2}
\end{equation}

It follows from (\ref{decomposition}), where $\gamma$ is replaced by $\tilde{\gamma}$, (\ref{remainder_operator}), and (\ref{contour2}) that
\begin{equation}
\begin{aligned}
&\Bigl\Vert\varepsilon\bigl((A_\chi)_{0,I}\bigr)^{-1/2}\sin\Bigl(\varepsilon^{-1}\bigl((A_\chi)_{0,I}\bigr)^{1/2}t\Bigr)
\\[0.3em]
&
\hspace{3.5cm}
-\sqrt{2}\varepsilon{\mathcal I}_1^*\bigl(A_\chi^{{\rm hom},(2)}\bigr)^{-1/2}\sin\Bigl(\varepsilon^{-1}\bigl(2A_\chi^{{\rm hom}, (2)}\bigr)^{1/2}t\Bigr){\mathcal I}_1\Theta_\chi\Bigr\Vert_{L^2(0,1)\to L^2(0,1)}
\\[0.2em]
&\hspace{7cm}
\le\varepsilon+\tilde{C}_2\varepsilon^{\min\{(\alpha+2)/3\}}\min\Biggl\{t,\sqrt{\frac{2}{\tilde{C}_4}}\frac{\varepsilon}{|\chi|}\Biggr\}.
\end{aligned}
\label{small_chi1}
\end{equation}

Furthermore, if $|\chi|>\tilde{C}_3\varepsilon^{(\alpha+2)/6}$ there exists $\tilde{C}_5>0$ such that
\[
\max\biggl\{\Bigl\Vert \bigl((A_\chi)_{0,I}\bigr)^{-1/2}\Bigr\Vert_{L^2(0,1)\to L^2(0,1)}, \sqrt{2}\Bigl\Vert \bigl(A_\chi^{{\rm hom}, (2)}\bigr)^{-1/2}\Bigr\Vert_{L^2(0,1)\to L^2(0,1)}\biggr\}\le \tilde{C}_5|\chi|^{-1}\le \tilde{C}_5\tilde{C}_3^{-1}\varepsilon^{-(\alpha+2)/6}.
\]
Combining this with (\ref{small_chi1}) yields  (cf. (\ref{inverse_root_est}))
\begin{equation}
\begin{aligned}
&
\Bigl\Vert\varepsilon\bigl((A_\chi)_{0,I}\bigr)^{-1/2}\sin\Bigl(\varepsilon^{-1}\bigl((A_\chi)_{0,I}\bigr)^{1/2}t\Bigr)
\\[0.3em]
&\hspace{1.5cm}
-\varepsilon\sqrt{2}{\mathcal I}_1^*\bigl(A_\chi^{{\rm hom}, (2)}\bigr)^{-1/2}\sin\Bigl(\varepsilon^{-1}\bigl(2A_\chi^{{\rm hom},(2)}\bigr)^{1/2}t\Bigr){\mathcal I}_1\Theta_\chi\Bigr\Vert_{L^2(0,1)\to L^2(0,1)}\le E^{(2)}(\varepsilon,\chi, t),
\end{aligned}
\label{evolest1}
\end{equation}
where (cf. (\ref{E1}))
\begin{equation}
E^{(2)}(\varepsilon,\chi, t):=\left\{\begin{array}{ll}\varepsilon+\tilde{C}_2\varepsilon^{(\alpha+2)/3}\min\Biggl\{t,\sqrt{\dfrac{2}{\tilde{C}_4}}\dfrac{\varepsilon}{|\chi|}\Biggr\}\qquad &{\rm if}\ \  |\chi|\le\varepsilon^{(\alpha+2)/6},\\[0.6em]
2\tilde{C}_5\tilde{C}_3^{-1}\varepsilon^{1-(\alpha+2)/6}\qquad &{\rm if}\ \   \varepsilon^{(\alpha+2)/6}\le|\chi|\le\pi.\end{array}\right. 
\label{E2}
\end{equation}


\subsection{Analysis of the second-order homogenised dynamics}
\label{second_order_dynamics}

Denote by $v^\pm_{(\chi)}$ 
normalised eigenvectors of the matrix $A_\chi^{{\rm hom},(2)}$ corresponding to the eigenvalues $z_\pm^{(\chi)}/2.$ Then one has
\[
\varepsilon^{-2}A_\chi^{{\rm hom}, (2)}=\bigl(v_\chi^-\  v_\chi^+\bigr)\left(\begin{array}{cc}z_\chi^-/2&0\\[0.2em]0&z_\chi^+/2\end{array}\right)\bigl(v_\chi^-\  v_\chi^+\bigr)^\top,
\]
where $(v_\chi^-\  v_\chi^+)$ is the matrix with columns $v_\chi^-,$ $v_\chi^+.$ It follows that 
\begin{align*}
\varepsilon\sqrt{2}{\mathcal I}_1^*\bigl(A_\chi^{{\rm hom}, (2)}\bigr)^{-1/2}\sin\Bigl(\varepsilon^{-1}\bigl(2A_\chi^{{\rm hom},(2)}\bigr)^{1/2}t\Bigr){\mathcal I}_1&=
2{\mathcal I}_1^*\bigl(v_\chi^-\  v_\chi^+\bigr)\left(\begin{array}{cc}\dfrac{\sin\bigl(\sqrt{z_\chi^-}t\bigr)}{\sqrt{z_\chi^-}}&0
\\[0.3em]0&\dfrac{\sin\bigl(\sqrt{z_\chi^+}t\bigr)}{\sqrt{z_\chi^+}}\end{array}\right)\bigl(v_\chi^-\  v_\chi^+\bigr)^\top{\mathcal I}_1\\[0.3em]
&=2\bigl((v_\chi^-)_1\bigr)^2\dfrac{\sin\bigl(\sqrt{z_\chi^-}t\bigr)}{\sqrt{z_\chi^-}}+O(\varepsilon),
\end{align*}
where 
$
(v_\chi^-)_1=\bigl(1+O\bigl(|\chi|^2\bigr)\bigr)/\sqrt{2}
$
is the first component of the vector $v_\chi^-.$
Using the fact that $z_\chi^-=\varepsilon^{-2}\bigl(A_\chi^{\rm hom}+O\bigl(|\chi|^4\bigr)\bigr),$ we obtain 
\begin{align*}
\varepsilon\sqrt{2}{\mathcal I}_1^*\bigl(A_\chi^{{\rm hom}, (2)}\bigr)^{-1/2}\sin\Bigl(\varepsilon^{-1}\bigl(2A_\chi^{{\rm hom},(2)}\bigr)^{1/2}t\Bigr){\mathcal I}_1&=\varepsilon\bigl(A_\chi^{\rm hom}\bigr)^{-1/2}\sin\bigl((z_\chi^-)^{1/2}t\bigr)+O\bigl(\varepsilon+\varepsilon|\chi|\bigr)\\[0.2em]
&=\varepsilon\bigl(A_\chi^{\rm hom}\bigr)^{-1/2}\sin\bigl((z_\chi^-)^{1/2}t\bigr)+O(\varepsilon).
\end{align*}

Combining this with the estimate (\ref{evolest1}) and using the formula (\ref{inverse_gelfand}) yields
\begin{equation*}
\begin{aligned}
&
\biggl\Vert(A_\varepsilon)^{-1/2}\sin\bigl((A_\varepsilon)^{1/2}t\bigr)
-\frac{1}{\sqrt{2\pi}}\int_{-\pi}^\pi\bigl(\varepsilon^{-2}A_\chi^{\rm hom}\bigr)^{-1/2}\sin\bigl((z_\chi^-)^{1/2}t\bigr)\exp({\rm i}\chi x/\varepsilon)\Theta_\chi d\chi\biggr\Vert_{L^2({\mathbb R})\to L^2({\mathbb R})}
\\[0.4em]
&\hspace{8cm}\le C\max\Bigl
\{\varepsilon^{(\alpha+2)/3}t,\varepsilon^{(4-\alpha)/6}\Bigr\}
\end{aligned}
\end{equation*}
for some $C>0.$

For $\alpha\in(1,4),$ 
the second-order approximation leads to a convergence estimate (as $\varepsilon\to0$) up to the times of order $\varepsilon^{-(\alpha+2)/3+\sigma},$ 
for all $\sigma>0.$ The corresponding error (uniform with respect to $\chi\in Y'$) is obtained from (\ref{E2}) as being of the order $O(\varepsilon^{\min\{1-(\alpha+2)/6, \sigma\}})=O(\varepsilon^{\min\{(4-\alpha)/6, \sigma\}}).$ 


\subsection{Comparison between the first-order and second-order approximations}\label{sect:comparison_bdry_triple_firstapprox_secondapprox}
\label{comparison_sec}

Within this section, we denote by $\alpha_1$ and $\alpha_2$ the values of the exponent $\alpha$ for the first-order and second-order approximations, respectively, $\alpha_1\in(1,2),$ $\alpha_2\in(1,4).$

Suppose that $\alpha_1,$ $\alpha_2$ are chosen so that the accuracies of the two approximations are the same, i.e., $(2-\alpha_1)/4=(4-\alpha_2)/6.$ Then $\alpha_2=1+3\alpha_1/2$ and the time intervals on which the approximations hold are of lengths of the orders $O(\varepsilon^{-\alpha_1+\sigma})$ and $O(\varepsilon^{-(\alpha_2+2)/3+\sigma})=O(\varepsilon^{-(1+\alpha_1/2)+\sigma}),$ for a fixed $\sigma\in(0,1).$ As $\alpha_1<2,$ it is evident that the time interval on which the second-order approximation holds is longer than that for the first-order approximation. By the same token, fixing the order of the time interval leads to a more accurate approximation in the second-order case.

\section{Concluding remarks}
    
In Sections \ref{prototype}--\ref{hyperbolic_sec},
we employed a boundary-triple framework to study long waves. This is novel in the context of homogenisation problems. We demonstrated its usage in a one-dimensional setup as a proof of concept.

We showed that if one takes an initial data $v_\text{init} \in L^2$ with an additional restriction on the support (in $\chi$) of its Gelfand transform $\hat{v}_\text{init}(y,\chi)$, then the leading-order approximation of is valid up to times $\mathcal{O}(\eps^{-2+\delta})$. 

Moreover, by keeping more terms in the Neumann series expansion in \eqref{epsM_expansion1}, plus a finer assumption on the $\chi-$support of $\hat{v}_\text{init}(y,\chi)$ (see first case of \eqref{E2}), it is possible to obtain a ``second-order approximation" (Section \ref{sect:boundary_triple_second_order}), which is an improvement on the leading-order approximation in the sense of a longer valid timescale at a common accuracy level, and in the sense of a better accuracy level at a common valid timescale (Section \ref{sect:comparison_bdry_triple_firstapprox_secondapprox}).

In connection to results of Birman-Suslina-Dorodnyi-Meshkova (Theorem \ref{thm:spectral_germ_overall}), we imposed smoothing assumptions on $v_\text{init}$, obtained a quantitative estimate in the $L^2\rightarrow L^2$ norm, and the maximal timescale in both cases are capped at the critical $\mathcal{O}(\eps^{-2})$ timescale of the classical ansatz. While this is expected based on the various ansatze discussed in Section \ref{sect:improving_basic_homo}, the present approach provides a fresh perspective in the following ways:
\begin{enumerate}[label=\roman*.]
    \item It generalizes the Birman-Suslina spectral germ to $A_\chi^{\hom}$ (Proposition \ref{Ahomprop}).
    
    \item It provides a precise link between the well-preparedness of the initial data $v_\text{init}$ and the maximal timescale.
    
    \item It expresses the second-order approximation as a \textit{single} effective \textit{self-adjoint} operator $A_\chi^{\hom,(2)}$. This is achieved by constructing a (non-unique) self-adjoint dilation of $A_\chi^{\hom}$ on $\C$ onto $\C^2$, see \eqref{gen_res}.
    
    \item By including more terms in the Neumann series expansion of the $M$-matrix $M_\chi(z)$, we have a recipe for extracting a hierarchy of operators $A_\chi^{\hom,(k)}$, potentially giving better valid effective descriptions of the hyperbolic dynamics up to the critical $\mathcal{O}(\eps^{-2})$ timescale. 
\end{enumerate}

Regarding the final point, we believe that with a more careful control of the spectral data to be kept or discarded, the boundary triple approach could be extended naturally to provide approximations beyond the $\mathcal{O}(\eps^{-2})$ timescale. This is open for future work.

\section{Acknowledgements}
KC was supported by EPSRC grant EP/V013025/1. YSL was supported by NSF grants NSF DMS-2246031 and NSF DMS-2052572. The authors are grateful to Dr Alexander V. Kiselev for insightful discussions on approximating Herglotz functions by continued fractions.

\renewcommand{\bibname}{References} 
\printbibliography
\addcontentsline{toc}{section}{\refname} 
\end{document}